\newtheorem{theorem}{Theorem}[section]
\newtheorem{cor}[theorem]{Corollary}
\newtheorem{lem}[theorem]{Lemma}
\newtheorem{prop}[theorem]{Proposition}
\newtheorem{conj}[theorem]{Conjecture}
\newtheorem*{theoremA}{Theorem~A}
\newtheorem*{theoremB}{Theorem~B}
\newtheorem*{theoremC}{Theorem~C}
\theoremstyle{definition}
\newtheorem{example}[theorem]{Example}
\newtheorem{defi}[theorem]{Definition}
\newtheorem{rem}[theorem]{Remark}
\numberwithin{equation}{section}
\DeclareMathOperator*{\colim}{colim}
\DeclareMathOperator{\Hom}{Hom}
\DeclareMathOperator{\Fun}{Fun}
\DeclareMathOperator{\Ndg}{N_{dg}}
\DeclareMathOperator{\Ndgp}{N^{\prime}_{dg}}
\DeclareMathOperator{\Ncoh}{N_{coh}}
\DeclareMathOperator{\Ob}{Ob}
\DeclareMathOperator{\Mod}{\!-Mod}
\DeclareMathOperator{\Comod}{\!-Comod}
\newcommand{\Ba}{\mathrm{B}}
\newcommand{\PCVect}{\mathsf{PCVect}}
\newcommand{\dgCat}{\mathsf{dgCat}} 
\newcommand{\dga}{\mathsf{dgAlg}}
\newcommand{\dgco}{\mathsf{dgCoa}^{\mathrm{conil}}}
\newcommand{\cuco}{\mathsf{cuCoa}^{\mathrm{conil}}}
\newcommand{\ptdco}{{\mathsf{ptdCoa}}}
\newcommand{\semialg}{\mathsf{SemiAlg}_{\mathrm{sp}}}
\newcommand{\cusemicoa}{\mathsf{spCoa}}
\newcommand{\Sset}{\mathsf{sSet}}
\newcommand{\sSet}{\mathsf{sSet}}
\newcommand{\qCat}{\mathsf{qCat}}
\newcommand{\sCat}{\mathsf{sCat}}
\newcommand{\Ch}{\mathsf{dgVect}}
\newcommand{\op}{\operatorname{op}}
\newcommand{\utilde}[1]{\widetilde{#1}}
\newcommand{\Dco}{\mathscr {D}^{\operatorname{co}}}
\newcommand{\C}{\mathcal C}
\newcommand{\D}{\mathcal D}
\newcommand{\Eta}{\mathrm{H}}
\def\ground{\mathbf{k}}
\def\cat{\mathcal}
\def\C{\mathcal{C}}
\def\MC{\operatorname{MC}}
\def\Sp{\operatorname{Sp}}
\def\id{\operatorname{id}}
\def\fr{\operatorname{fr}}
\def\co{\operatorname{co}}
\thanks{This work was partially supported by the EPSRC grant EP/N015452/1. The first author acknowledges support by the Deutsche Forschungsgemeinschaft under Germany’s Excellence Strategy -- EXC 2121 “Quantum Universe” -- 390833306. The work was completed in part during the second author's visit to MPIM (Bonn) and he acknowledges excellent working conditions in this institution.}
\keywords{DG category, dg-nerve, coalgebra, model category, $\infty$-category, bar-construction, cobar-construction}
\begin{document}
	\bibliographystyle{../hsiam2}
	\begin{abstract}
		In this paper we establish  Koszul duality between dg categories and a class of curved coalgebras, generalizing the corresponding result for dg algebras and conilpotent curved coalgebras.
		We show that the normalized chain complex functor transforms the Quillen equivalence between quasicategories and simplicial categories into this Koszul duality. This allows us to give a conceptual interpretation of the dg nerve of a dg category and its adjoint. As an application, we prove that the category of representations of a quasicategory $K$ is equivalent to the coderived category of comodules over $C_*(K)$, the chain coalgebra of $K$. A corollary of this is a characterization of the category of constructible dg sheaves on a stratified space as the coderived category of a certain dg coalgebra.	
	\end{abstract}
	\title[Categorical Koszul duality]{Categorical Koszul duality}
	\author{J. Holstein}
	\address{Department of Mathematics\\
		Universit\"at Hamburg\\
		20146 Hamburg\\
		Germany	
	}
	\email{julian.holstein@uni-hamburg.de}
	
	\author{A.~Lazarev}
	\address{Department of Mathematics and Statistics\\
		Lancaster University\\
		Lancaster LA1 4YF\\United Kingdom}
	\email{a.lazarev@lancaster.ac.uk}
	\maketitle
	\tableofcontents
	
\section{Introduction}
Koszul duality is a collection of related phenomena playing an important role in many subfields of algebra and geometry such as deformation theory \cite{Hinich01}, operads and operadic algebras \cite{Ginzburg94}, representation theory \cite{Beilinson96}, to mention just a few prominent examples among very substantial literature on this subject. The present paper is focused on the part of this vast theory that is relevant to differential graded (dg) associative algebras, dg categories and coalgebras. Associative Koszul duality   states that there is an adjunction between the categories of augmented dg algebras and a class of dg coalgebras (called conilpotent) provided by explicit bar and cobar constructions. Both categories possess structures of model categories and this adjunction can be strengthened to a Quillen equivalence between them.

There is also a version of Koszul duality  between not necessarily augmented dg algebras and conilpotent coalgebras supplied with a `differential', that does not necessarily square to zero,  so-called \emph{curved} coalgebras \cite{Positselski11}.

The main result of this paper is a further generalization of this correspondence with dg algebras replaced by \emph{dg categories}.
The Koszul dual of a dg category $\cat D$, given by a bar construction $\Ba\cat D$ is a \emph{pointed curved coalgebra}. Recall that a coalgebra is pointed if its coradical is a direct sum of copies of the ground field.
This sum is indexed by the set of grouplike elements, and the grouplike elements of $\Ba\cat D$ correspond to the objects of the dg category $\cat D$.
A pointed curved coalgebra consists of a curved coalgebra that is pointed, has a splitting of the coradical and satisfies some compatibilities. 
We denote the category of pointed curved coalgebras equipped with a final object by $\ptdco^*$.
We will construct a model structure on $\ptdco^*$ in Proposition \ref{prop:coalgebramodel}.
We denote by $\dgCat^\prime$ the category of small dg categories modified by the assumption that identity morphisms are not equal to zero except in the case of the category with one object and a single zero morphism. Note that $\dgCat^\prime$  inherits the Dwyer-Kan model structure from $\dgCat$ and is equivalent to it as an $\infty$-category.
Here is our first main result (Theorem \ref{thm:koszulquillen} below):
\begin{theoremA}
	There is a Quillen equivalence $\Omega: \ptdco^* \rightleftarrows \dgCat^\prime: B$ between model categories $\dgCat^\prime$ and $\ptdco^*$.
\end{theoremA}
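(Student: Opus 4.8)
The plan is to follow the template of Positselski's curved Koszul duality for dg algebras \cite{Positselski11} and promote each step to the many-object setting. I first recall the two functors explicitly. The bar construction $B$ sends a dg category $\cat D$ to the pointed curved coalgebra whose grouplike elements are indexed by the objects of $\cat D$ and whose reduced part is the cofree (tensor) coalgebra on the shifted morphism complexes $s\cat D(x,y)$, with cotensoring taken along composable strings of objects; the curvature records the failure of the bar differential to square to zero, and the splitting of the coradical is supplied by the identities of $\cat D$. Dually, $\Omega$ sends a pointed curved coalgebra $C$ to the dg category with object set the grouplike elements of $C$ and with morphism complexes assembled from the reduced coalgebra by the cobar differential, the curvature entering as its linear term. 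The convention in $\dgCat^\prime$ that identities are nonzero is exactly what makes $\Omega$ land among dg categories with genuine units. Establishing that $\Omega \dashv B$ is then formal, via the universal property of the cofree pointed coalgebra together with the compatibilities built into $\ptdco^*$.

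Next I verify that $(\Omega, B)$ is a Quillen adjunction. Here I use the fact that, in the model structure of Proposition \ref{prop:coalgebramodel}, the weak equivalences of $\ptdco^*$ are by design the maps $f$ for which $\Omega f$ is a quasi-equivalence, while the cofibrations are monomorphisms of the underlying graded coalgebras. Preservation of acyclic cofibrations is then immediate: a cofibration that is a weak equivalence is sent by $\Omega$ to a map that is simultaneously a cofibration (to be checked) and a quasi-equivalence. Thus it only remains to show $\Omega$ preserves cofibrations, which I check on generators---freely adjoining a grouplike element becomes freely adjoining an object, and freely adjoining a coalgebra generator becomes freely adjoining a morphism---each of which is a cofibration in the Dwyer--Kan structure on $\dgCat^\prime$.

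With the Quillen adjunction in place, I reduce the whole statement to a single assertion about the counit. Since every object of $\dgCat^\prime$ is fibrant and $\Omega$ reflects weak equivalences by construction, the standard criterion for a Quillen equivalence reduces the claim to showing that the counit $\varepsilon_{\cat D}\colon \Omega B\cat D \to \cat D$ is a quasi-equivalence for every $\cat D$, the passage to the derived counit being routine since $\Omega$ is left Quillen and the objects $B\cat D$ are cofibrant. (The unit $\eta_C\colon C \to B\Omega C$ then comes for free: the triangle identity gives $\varepsilon_{\Omega C}\circ \Omega\eta_C = \mathrm{id}_{\Omega C}$, so once $\varepsilon_{\Omega C}$ is a weak equivalence, two-out-of-three forces $\Omega\eta_C$, and hence $\eta_C$, to be one as well.)

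The hard part is therefore the counit, i.e. the categorical bar-cobar resolution. As $\varepsilon_{\cat D}$ is the identity on objects, it suffices to prove that for each pair $x,y$ the induced map $\Omega B\cat D(x,y) \to \cat D(x,y)$ is a quasi-isomorphism. I expect to exhibit an explicit contracting homotopy on the acyclic part of the two-sided bar-cobar complex of strings from $x$ to $y$, generalizing the classical scalar computation; the homotopy is the ``extra degeneracy'' that inserts an identity morphism, available precisely because identities in $\dgCat^\prime$ are nonzero. Filtering by tensor length yields a spectral sequence whose associated graded is contractible away from length zero and equals $\cat D(x,y)$ there, and one checks the filtration is exhaustive and bounded below in each homological degree so that the sequence converges. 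The principal obstacle is the many-object bookkeeping: one must track composability of strings, the curvature term, and the coradical splitting simultaneously, and verify that the contractions for the various pairs $(x,y)$ are mutually compatible so as to assemble into the required quasi-equivalence of dg categories. This is where the argument genuinely departs from the one-object curved case of \cite{Positselski11}.
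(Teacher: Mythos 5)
Your proposal is correct and follows essentially the same route as the paper: the model structure on $\ptdco^*$ is rigged so that $\Omega$ creates (hence reflects) weak equivalences, the Quillen adjunction is checked on generating cofibrations, the counit $\Omega B\cat D \to \cat D$ is shown to be a quasi-equivalence by the classical filtration/spectral-sequence bar--cobar argument, and the unit is then handled by the triangle identity and two-out-of-three. The only notable difference is presentational: the paper runs the counit argument once at the level of semialgebras in $\ground[\Ob(\cat D)]$-bicomodules, where cotensoring over the cosemisimple coradical is exact and Positselski's one-object proof carries over verbatim, so the ``many-object bookkeeping'' you single out as the genuinely new difficulty is absorbed by the bicomodule formalism rather than tracked hom-by-hom with an explicit extra-degeneracy homotopy.
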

Restricted to the subcategory of dg categories with one object, this reduces to the ordinary dg Koszul duality between dg algebras and curved conilpotent coalgebras.

The key new idea for proving this result is to interpret a dg category $\cat D$ as 
a monoid in dg bicomodules over the coalgebra $\oplus_{\Ob(\cat D)} \ground$.
We believe that this is of independent interest.
Then one can perform a bar construction in bicomodules to obtain a curved comonoid in bicomodules, which is a relative curved coalgebra.
The adjoint is given by the relative cobar construction in bicomodules.

The bicomodule viewpoint allows us to directly adapt most of the arguments from the conilpotent case treated in \cite{Positselski11}.
Many proofs then boil down to the verification of various algebraic identities.
While relatively straightforward, the computations can be be quite cumbersome, and we have chosen to include some detailed arguments to conceptualize and simplify them. To this end we introduce and make extensive use of the so-called \emph{uncurving functor} which associates to a curved (co)algebra an uncurved one in a universal way (see Proposition \ref{prop:uncurving} and Corollary \ref{cor:uncurvingcoalg}). This allows us to substantially streamline various calculations. 

It is likely that our approach permits the translation of most of the results of \cite{Positselski11} to the setting of dg categories and pointed curved coalgebras with only minimal modification. Such a translation would, of course, necessitate a considerable expansion of this paper and has not been undertaken here. Of particular interest is the treatment, only alluded to here, of $A_\infty$ categories and $A_\infty$ functors based on Koszul duality for dg categories.

Next we show that this dg categorical Koszul duality  is closely related to the coherent nerve of simplicial categories and its adjoint. 
Recall from \cite{Lurie11a} that there is a Quillen equivalence $\mathfrak C: \qCat \rightleftarrows \sCat: \Ncoh$, where we write $\qCat$ for simplicial sets with the Joyal model structure and $\sCat$ for simplicial categories with the Dwyer-Kan model structure. 
We show that this equivalence is, in some sense, a Koszul duality in the nonlinear context. 

More precisely, the normalized chain construction induces functors $\utilde C_*: \qCat \to \ptdco^*$ and $G_*: \sCat \to \dgCat$ which transform $\mathfrak C: \qCat \rightleftarrows \sCat: \Ncoh$ into Koszul duality, see Corollary \ref{cor:nervekoszul}.
For categories with one object this was shown by Rivera and Zeinalian \cite{Zeinalia16}.

Note that the functor $\utilde C_*:\qCat\to\ptdco^*$ is  left Quillen and we denote by $F$ its right adjoint. Thus, $F$ associates to a pointed curved coalgebra a simplicial set; its construction is similar to the construction of a simplicial set out of a commutative dg algebra in rational homotopy theory, cf. \cite{BousfieldGugenheim76}. 

We deduce our second main result (Theorem \ref{thm:dgnerveadjoint} below), the following description of Lurie's dg nerve construction:
\begin{theoremB}
	The functor $\Ndg: \dgCat \to \qCat$ constructed in \cite{Lurie11} is equivalent to the functor $\cat D \mapsto FB(\cat D)$. Its left adjoint is given by $K \mapsto \Omega \tilde C_*(K)$. 
\end{theoremB}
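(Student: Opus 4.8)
The second assertion is a formal consequence of the first. By Theorem \ref{thm:koszulquillen} we have an adjunction $\Omega \dashv B$, and by construction $\utilde{C}_* \dashv F$; hence the composite $\Omega\utilde{C}_*$ is left adjoint to $FB$. Once we know $\Ndg \cong FB$, uniqueness of adjoints immediately identifies the left adjoint of $\Ndg$ with $K \mapsto \Omega\utilde{C}_*(K)$ (landing in $\dgCat^\prime$, which we identify with $\dgCat$ as in the remark preceding Theorem A). So the whole statement reduces to producing a natural isomorphism $\Ndg \cong FB$.

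To build this I would compute $FB(\cat D)$ simplex-wise. Since $\utilde{C}_*$ is a left adjoint it preserves colimits, so it is the left Kan extension of $[n]\mapsto \utilde{C}_*(\Delta^n)$ and its right adjoint is the nerve-type functor $F(C)_n = \Hom_{\ptdco^*}(\utilde{C}_*(\Delta^n), C)$. Applying this to $C = B\cat D$ and then using the Koszul adjunction $\Omega\dashv B$ of Theorem \ref{thm:koszulquillen} gives a chain of natural bijections
\begin{equation*}
FB(\cat D)_n \;=\; \Hom_{\ptdco^*}\!\bigl(\utilde{C}_*(\Delta^n),\, B\cat D\bigr)\;\cong\; \Hom_{\dgCat^\prime}\!\bigl(\Omega\utilde{C}_*(\Delta^n),\, \cat D\bigr),
\end{equation*}
natural in $[n]\in\Delta$ and in $\cat D$. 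Thus $FB(\cat D)$ is the simplicial set corepresented by the cosimplicial dg category $\Omega\utilde{C}_*(\Delta^\bullet)$, and everything comes down to identifying this corepresenting object with the one corepresenting Lurie's dg nerve.

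The heart of the proof is therefore to make $\Omega\utilde{C}_*(\Delta^n)$ explicit and match it with the combinatorial definition of $\Ndg$. The pointed curved coalgebra $\utilde{C}_*(\Delta^n)$ of normalized chains on the simplex has grouplikes the vertices $0,\dots,n$ and a basis of its reduced part indexed by the faces $S\subseteq[n]$, with coproduct dual to the ordering (deconcatenation). The relative cobar construction $\Omega$ then produces a dg category with object set $\{0,\dots,n\}$ whose morphism complex from $i$ to $j$ is free on generators $e_S$ indexed by the faces $S$ with $\min S=i$, $\max S=j$, placed in degree $|S|-2$, and whose differential is the cobar differential: an internal bar part $\sum_\ell(-1)^\ell e_{S\setminus\{i_\ell\}}$ coming from the simplicial boundary, together with the deconcatenation terms $\sum_\ell(-1)^\ell\, e_{S_{\ge i_\ell}}\cdot e_{S_{\le i_\ell}}$ expressed through composition in the cobar category. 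A dg functor $\Omega\utilde{C}_*(\Delta^n)\to\cat D$ is then precisely a choice of objects $X_i$ together with elements $f_S:=\Phi(e_S)\in\cat D(X_{\min S},X_{\max S})_{|S|-2}$, and the condition that $\Phi$ be a dg functor is exactly the Maurer--Cartan-type identity $d f_S=\sum_\ell(-1)^\ell\bigl(f_{S\setminus\{i_\ell\}}-f_{S_{\ge i_\ell}}f_{S_{\le i_\ell}}\bigr)$ defining an $n$-simplex of $\Ndg(\cat D)$. Matching units (the grouplikes, resp. the distinguished degenerate generators, with the identities $\id_{X_i}$) is where the passage between $\dgCat^\prime$ and $\dgCat$ enters, and here one uses the normalization built into $\ptdco^*$ and $\dgCat^\prime$ recorded before Theorem A. Conceptually this identification is exactly the statement that, under Corollary \ref{cor:nervekoszul}, $\Omega\utilde{C}_*(\Delta^\bullet)\cong G_*\mathfrak C(\Delta^\bullet)$ is the standard cosimplicial dg category corepresenting the dg nerve.

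I expect the main obstacle to be this last matching: verifying that the cobar differential of $\utilde{C}_*(\Delta^n)$ reproduces Lurie's defining relation on the nose, with the correct signs, and that the resulting bijection is simultaneously natural in the simplicial variable $[n]$ (compatibility of the face and degeneracy maps of $\Delta^\bullet$ with those of the dg nerve) and in $\cat D$. The unit/identity bookkeeping---precisely the reason $\Ndg$ is naturally defined on $\dgCat$ while Koszul duality lives on $\dgCat^\prime$---is the one genuinely nontrivial point beyond routine sign-chasing, and it is settled by the comparison of the two normalizations.
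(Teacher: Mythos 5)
Your proposal is correct and takes essentially the same route as the paper: reduce the adjoint statement to the identification of $\Ndg$ with the right adjoint of $\Omega\utilde C_*$ (the paper's Lemma \ref{lem:leftadjoint} plus uniqueness of adjoints), then match simplex-wise via the bar--cobar adjunction, $FB(\cat D)_n \cong \Hom(\Omega\utilde C_*(\Delta^n),\cat D) \cong \MC(\utilde C_*(\Delta^n),\cat D)$, against Lurie's explicit formula. The paper unravels the middle (Maurer--Cartan) description through its auxiliary functor $\Ndgp$ where you unravel dg functors out of the explicit cobar category---the same computation by Theorem \ref{thm-semibarcobar}---and the face/degeneracy and identity bookkeeping you correctly single out as the crux is exactly what the paper settles via Lemma \ref{lem:twistedchainssimplex} and the splitting $V_{\cat D}=\bar V_{\cat D}\oplus\ground[\Ob(\cat D)]$.
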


In particular, this provides a clear and conceptual construction of the left adjoint to Lurie's dg nerve, generalizing the construction for quasi-categories with a single object in \cite{Zeinalia16}.
The left adjoint has an especially nice form when
$K$ is a Kan complex; in that case $L(K)$ is (quasi-equivalent to) the dg category whose objects are in 1-1 correspondence with connected components of $K$, the endomorphism dg algebra of each object is the chain algebra of the based loop space on the corresponding component and there are no morphisms between different objects. The right adjoint may also be characterized in terms of Maurer-Cartan elements in a certain simplicial convolution algebra.

For simplicity we have stated these results for the case of dg categories over a field. 
Slightly weakened versions of them hold true if we work in dg categories over the integers. Note, however, that the construction of Koszul duality for algebras or dg categories over integers (or indeed, any commutative ring that is not a field)  and the corresponding coalgebras as a Quillen equivalence meets with technical difficulties. For example, the naive cobar-construction of a coalgebra that is not flat over the ground ring, may have the wrong quasi-isomorphism type and so, the notion of a weak equivalence (which is defined through the cobar-construction) for such coalgebras is problematic.

We also generalize Koszul duality for modules. As in the conilpotent case, there are two results of this type, see Theorem \ref{thm:modulecomodule}: 
\begin{enumerate}\item a Quillen equivalence between dg modules over a dg category $\cat D$ and comodules over the coalgebra $B\cat D$ and
	\item a Quillen equivalence between comodules over a pointed curved coalgebra $C$ and dg modules over the dg category $\Omega C$.\end{enumerate}
This leads to a characterization of the functor category from a quasicategory $K$ to the $\infty$-category of chain complexes in terms of $C_*K$-comodules (Theorem \ref{thm:main} below). 	
\begin{theoremC}
	Let $K$ be an $\infty$-category represented as a simplicial set. 
	Then there is an equivalence between $\Fun(K, \Ndg(\Ch))$ and the coderived $\infty$-category of the coalgebra $C_*K$.	
\end{theoremC}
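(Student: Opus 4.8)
The plan is to construct the equivalence as a composite of three identifications, so that Theorem~C reduces to results already established in this paper. Throughout, $K$ is regarded as an object of $\qCat$.

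First I would identify the functor $\infty$-category $\Fun(K, \Ndg(\Ch))$ with the derived $\infty$-category of dg modules over the dg category $\Omega\tilde C_* K$. The mechanism is the adjunction of Theorem~\ref{thm:dgnerveadjoint}: since $\Ndg$ is the dg nerve with left adjoint $K \mapsto \Omega\tilde C_*(K)$, a functor $K \to \Ndg(\Ch)$ should correspond to a dg functor $\Omega\tilde C_* K \to \Ch$, that is, to a dg module over $\Omega\tilde C_* K$. To promote this object-level statement to an equivalence of functor $\infty$-categories, I would pass through the Quillen equivalence $\mathfrak C: \qCat \rightleftarrows \sCat: \Ncoh$. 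By Lurie's straightening, $\Fun(K, \Ndg\Ch)$ is computed by $\Ch$-valued representations of the rigidification $\mathfrak C K$, and by Corollary~\ref{cor:nervekoszul} the dg category $G_*\mathfrak C K$ is quasi-equivalent to $\Omega\tilde C_* K$. Thus $\Fun(K, \Ndg\Ch)$ is equivalent to the derived $\infty$-category of $\Omega\tilde C_* K\Mod$.

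Second, I would apply the module half of categorical Koszul duality. By part~(2) of Theorem~\ref{thm:modulecomodule}, dg modules over $\Omega C$ are Quillen equivalent to comodules over the pointed curved coalgebra $C$ equipped with its coderived model structure. Taking $C = \tilde C_* K$ gives an equivalence between the derived $\infty$-category of $\Omega\tilde C_* K\Mod$ and the coderived $\infty$-category $\Dco(\tilde C_* K\Comod)$.

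Third, I would match the coalgebra $\tilde C_* K$ with $C_* K$. Since $C_* K$ is the underlying coalgebra of the pointed curved coalgebra $\tilde C_* K$ produced by normalized chains, this is either a direct identification or a short comparison verifying that the relevant map induces an equivalence of coderived $\infty$-categories of comodules. Composing the three steps yields the asserted equivalence between $\Fun(K, \Ndg(\Ch))$ and the coderived $\infty$-category of $C_* K$.

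The main obstacle is the first step: upgrading the object-level adjunction of Theorem~\ref{thm:dgnerveadjoint} to an equivalence of functor $\infty$-categories and correctly identifying $\Fun(K, \Ndg\Ch)$ with a derived category of dg modules. This requires care in comparing three models of the same homotopy type --- a Joyal-fibrant replacement of $K$, the rigidification $\mathfrak C K$ with its dg enrichment $G_*\mathfrak C K$, and the Koszul dual $\Omega\tilde C_* K$ --- and in checking that straightening of $\Ch$-valued functors agrees, up to quasi-isomorphism, with the dg module description. Once this is in place, the remaining two steps are direct invocations of Theorem~\ref{thm:modulecomodule} and of the coalgebra comparison.
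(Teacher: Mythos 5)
Your overall architecture coincides with the paper's: the proof of Theorem~\ref{thm:main} given there is exactly the composite of your three steps, with your second and third steps invoked verbatim (Theorem~\ref{thm:modulecomodule}(2) applied to $C = \utilde C_*K$, and the isomorphism $C_*K \cong \utilde C_*K$ from Lemma~\ref{lem:twistedchainfunctor}), and with the statement of your first step being precisely Lemma~\ref{lem:quasidg}. The difference lies entirely in how that first step is established, and there your argument has a genuine gap. The paper proves Lemma~\ref{lem:quasidg} by a Yoneda argument in $\operatorname{Ho}(\qCat)$, using the adjunction $L \dashv \Ndg$ on homotopy categories, the fact that $L = G_*\circ \mathfrak C$ takes products to derived tensor products, and To\"en's theory of the derived internal Hom of dg categories; it is To\"en's explicit model of $R\Hom(LK,\Ch)$ by quasi-representable modules that ultimately identifies $\Fun(K,\Ndg\Ch)$ with $\mathscr D(LK)$.

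Your replacement for this step, ``by Lurie's straightening, $\Fun(K,\Ndg\Ch)$ is computed by $\Ch$-valued representations of $\mathfrak C K$,'' conflates two statements, one of which is not formal. Straightening in the form of \cite[Proposition 4.2.4.4]{Lurie11a}, combined with $\Ndg(\Ch) \simeq \Ncoh(H\Ch)$ from \cite[Proposition 1.3.1.17]{Lurie11}, identifies $\Fun(K,\Ndg\Ch)$ with the coherent nerve of \emph{simplicial} functors from $\mathfrak C K$ into a simplicial model category modelling $\Ch$. To convert these into \emph{dg} functors out of $G_*\mathfrak C K$ --- which is what you need before you can quote Corollary~\ref{cor:nervekoszul} and Morita invariance of $\mathscr D$ --- you need a Dold--Kan equivalence at the level of enriched functor categories. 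This does not come for free: the normalization underlying $G_*$ is only lax monoidal (via the Eilenberg--Zilber map), so $G_* \dashv H$ is \emph{not} a strict adjunction compatible with the enrichments; the paper itself stresses, in the proof of Corollary~\ref{cor:nervekoszul}, that this adjunction holds only at the level of homotopy categories. Consequently a simplicial functor $\mathfrak C K \to H\Ch$ is genuinely not the same datum as a dg functor $G_*\mathfrak C K \to \Ch$, and comparing their localizations requires an additional theorem (a Schwede--Shipley or Tabuada-style Dold--Kan equivalence for modules over enriched categories) that you neither state nor prove; it is precisely to avoid this input that the paper routes through To\"en's $R\Hom$. If you supply such a comparison, your route works and is arguably the more conceptual one; as written, however, your first step asserts Lemma~\ref{lem:quasidg} rather than proving it.
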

Note that this category of functors may be considered as the derived $\infty$-category of $K$.
As an application of this result, we show that the $\infty$-category of constructible sheaves of dg vector spaces on a stratified space $X$ is equivalent to the coderived category of the chain coalgebra of the simplicial set $\operatorname{Exit}(X)$ of exit paths on $X$, see Proposition \ref{prop:constructiblecomodules}.

\subsection{Outline of the paper}

In Section \ref{sect:background} we introduce two main objects of study: Split dg \emph{semialgebras} and split curved coalgebras. We show that the category of small dg categories is equivalent to a subcategory of split dg semialgebras.

Section \ref{sect:main} is the main part of the paper. We begin by recalling the classical bar and cobar constructions for dg algebras and curved conilpotent coalgebras. 
We then introduce the uncurving functor in Section \ref{sect:uncurving} to simplify some computations. 
In Section \ref{sect:barsemialgebra} we set up our natural generalization of the bar construction to split dg semialgebras, which will contain dg categories as a special case.
We then construct the cobar construction of a split curved coalgebra in Section \ref{sect:barcoalgebra} and show the two functors are adjoint in Section \ref{sect:koszuladjunction}.
In Section \ref{sect:bardgcategory} we specialise to the adjunction between dg categories and pointed curved coalgebras.
We construct a model structure on pointed curved coalgebras in Section \ref{sect:coalgebramodel}
and prove Theorem A, the Quillen equivalence between pointed curved coalgebras and dg categories in Section \ref{sect:quillenequivalence}. In Section \ref{sect:comodulekoszul} we construct Koszul duality on the level of modules, showing the coderived category of comodules over a split curved coalgebra is equivalent to the derived category over its cobar construction.

In the remainder of the paper we consider three applications. In Section \ref{sect:dgnerve} we rewrite the dg nerve of a dg category in terms of the bar construction (Theorem B) and show that the bar cobar adjunction is a linearization of the equivalence of simplicial categories and quasi-categories.

In Section \ref{sect:functorcategories} we use our results to show Theorem C exhibiting certain functor categories as categories of comodules. In Section \ref{sect:stratified} we apply this to exhibit constructible sheaves on a stratified space $X$ as comodules over the chain coalgebra of the exit paths of $X$.

\subsection{Notation and conventions} We work in the category $\Ch$ of differential graded (dg) vector spaces over a field $\ground$; the grading is always a cohomological $\mathbb Z$-grading.   The $n$-fold shift  of a  graded vector space $V$ is defined as  $V[n]^i= V^{i+n}$ while the element in $V[n]$ corresponding to $v\in V$ will be denoted by $s^nv$. 

The category $\Ch$ is symmetric monoidal, and monoids in it are called dg algebras. Similarly the category of comonoids in $\Ch$ are dg coalgebras. The structure theory of ungraded coalgebras generalizes in a straightforward way to the graded case and we will use results and terminology from standard reference books such as \cite{Sweedler68}. A coalgebra is \emph{cosimple} if it has no proper subcoalgebras and \emph{cosemisimple} if it is a sum of its cosimple coalgebras. A \emph{coradical} of a coalgebra $C$, is the maximal cosemisimple subcoalgebra $C_0$ of $C$. A \emph{pointed} (graded) coalgebra is a coalgebra  whose cosimple subcoalgebras are one dimensional; furthermore a pointed (graded) coalgebra whose coradical is $1$-dimensional is called \emph{conilpotent}.  

Given a dg coalgebra $C$, a left \emph{dg $C$-comodule} is a dg vector space $M$ together with a coaction map $M\to C\otimes M$ subject to the usual coassociativity and counit axioms. The notions of a right dg $C$-comodule and of a dg $C$-bicomodule are defined similarly.

We will also work with \emph{pseudocompact} dg vector spaces, or projective limits of finite-dimensional vector spaces; thus a pseudocompact dg vector space $V$ can be written as $V=\varprojlim_\alpha V_{\alpha}$ for a projective system $\{V_\alpha\}$ of finite dimensional dg vector spaces. The grading for pseudocompact vector spaces is likewise cohomological. The category of pseudocompact dg vector spaces $\PCVect$ is equivalent to the opposite category to $\Ch$ with anti-equivalence established by the $\ground$-linear duality functor. The category $\PCVect$ also admits a symmetric monoidal structure, dual to that in $\Ch$; it will be denoted  simply by $\otimes$. Monoids in $\PCVect$ are called pseudocompact algebras; they form a category  opposite to that of dg coalgebras.

\begin{rem}
	The category of graded pseudocompact algebras has an auxiliary function in this paper. 
	 We found the setting of pseudocompact algebras more productive for concrete calculations than that of coalgebras (though admittedly this is in part due to a psychological effect and tastes may differ).
	In particular, various technical results are significantly easier to prove and even some definitions  easier to state in the algebraic context.
	We will thus freely switch to the dualized setting in the course of this paper when appropriate.
\end{rem}

Given a graded pseudocompact algebra $A$, its maximal semisimple quotient will be denoted by $A_0$; it is \emph{pointed} if $A_0$ is a product of copies of $\ground$ (so that $A$ is dual to a pointed coalgebra).

Occasionally we need to consider the tensor product of a pseudocompact dg vector space $V=\varprojlim_\alpha V_{\alpha}$ and a discrete one $U$; in this situation we will always write $V\otimes U$ for $\varprojlim_\alpha V_{\alpha}\otimes U$; such a tensor product is in general neither discrete nor pseudocompact. 

We will also need the notion of a  Maurer-Cartan (MC) element in an algebra $A$; it is  an element $x\in A^1$ such that $dx+x^2=0$; the set of MC elements in $A$ will be denoted by $\MC(A)$.

A dg category is a category enriched over the category of (co)chain complexes for abelian groups; and unless indicated otherwise, we will always assume that dg categories are, in fact, enriched over $\Ch$. Dg categories possess a model category structure \cite{Tabuada05} where weak equivalences are so-called \emph{quasi-equivalences}, a dg-version of the ordinary notion of equivalence of categories.

We will use some of the language and theory of $\infty$-categories.
In general, this term will stand for an $(\infty,1)$-category considered in a model-agnostic way.

We also use two specific models: \emph{quasicategories} (weakly Kan simplicial sets) and \emph{relative categories}.
We will use standard results and terminology of  quasicategories following \cite{Lurie11, Lurie11a}. 
The symbol $\qCat$ will stand for the category of simplicial sets supplied with the Joyal model structure whose fibrant objects are quasicategories. The weak equivalences are categorical equivalences, denoted by the symbol $\simeq$.
The same category with the ordinary Quillen model structure will be denoted by $\Sset$. 
The subcategories of $\qCat$ and $\sSet$ whose objects are \emph{reduced} simplicial sets will be denoted by $\qCat^0$ and $\sSet^0$. 
Given simplicial sets $K$ and $L$ such that $L$ is a quasicategory we denote by $\Fun(K,L)$ the quasicategory of functors from $K$ to $L$.

Relative categories \cite{Barwick12}
are categories equipped with a special class of morphisms, to be thought of as weak equivalences, and their homotopy theory is equivalent to other models of $\infty$-categories.
Given any category $\cat C$ with a class $W$ of morphisms we may consider it as a relative category, and we call this the $\infty$-category obtained by localizing $\cat C$ at $W$. 
In particular any model category gives rise to an $\infty$-category.

\subsection{Acknowledgements}
The authors benefitted from discussions with J. Chuang, B. Keller, L. Positselski and J. Woolf.
 
We thank Manuel Rivera for useful comments on the first version of this paper, and George Raptis for pointing out one of the results we originally anounced was incorrect (see Remark \ref{rem:mistake}). We thank Arne Mertens for pointing out a gap in the proof of Proposition \ref{prop:coalgebramodel} and useful discussions on fixing it.

We also thank the anonymous referee for helpful comments.

\section{Categories and semialgebras}\label{sect:background}

\subsection{Semialgebras}
To prove Koszul duality for dg categories we will redefine the notion of a $\ground$-linear category as a monoid in a certain monoidal category.

We first need some basics of the theory of comodules over coalgebra.
\begin{defi}
	Let $C$ be a graded coalgebra, $M$ is a left dg $C$-comodule and $N$ be a right dg $C$-comodule. Then their \emph{cotensor product} 
	$M\boxempty_C N$ is the equalizer of the two maps given by the left and right coactions 
	of $C$:
	\[
	\xymatrix{M\boxempty_CN\ar[r]& M\otimes N\ar@<-.5ex>[r]\ar@<.5ex>[r]&M\otimes C\otimes N}
	\]
	 \end{defi}
 If $M,N$ are two dg bicomodules over a coalgebra $C$, then $M\boxempty_CN$ is a dg  $C$-bicomodule in a natural way. This endows the category of dg $C$-bicomodules with a (nonsymmetric) monoidal structure. Its unit is $C$, viewed as a bicomodule over itself.
 
 \begin{defi}\label{def:semialgebra}
 A \emph{semialgebra} is a pair $(A, R)$ consisting of a a graded coalgebra $R$ and a monoid $A$ in dg $R$-bicomodules. 
 
 A homomorphism of semialgebras $(A, R) \to (B, S)$ consists of a homomorphism of graded coalgebras $f: R \to S$ and a dg $S$-bicomodule map $A \to B$ (where $A$ becomes a $S$-bicomodule via $f$), which is compatible with the monoid structure, i.e.\ the following diagram commutes:
 	\[ \xymatrix{
		A \boxempty_{R}A \ar[r]\ar[d] & A \boxempty_{S}A \ar[r] & B \boxempty_{S} B\ar[d]\\
		A\ar[rr]& & B
	}\]
Note that in this setup $A$	is not necessarily a monoid in $S$-bicomodules. For technical reasons we also consider the case that $R$ is the zero coalgebra. The only comodule  over $0$ is the zero vector space, so this gives us the semialgebra $(0, 0)$, which is initial among semialgebras.

Semialgebras together with their morphisms form a category that we denote by $\mathsf{SemiAlg}$. We will often simplify the notation $(A,R)$ to $A$ when it does not cause confusion.
\end{defi}
	
\begin{rem}
 Positselski considers in \cite{Posit10}
a more general notion of a semialgebra as a triple $(A, R, B)$ where $B$ is a $\ground$-algebra and $R$ is an \emph{$B$-coring}. 
This more general notion  reduces to ours when $B=\ground$ and so the coring $R$ becomes a $\ground$-coalgebra. We will occasionally need to consider the case $\ground = \mathbb Z$.
Specializing further $R=\ground$, we see that a semialgebra of the form $(A, \ground)$ is nothing but a $\ground$-algebra.
\end{rem}
There is a corresponding notion of a module (or, more precisely, a semimodule) over a semialgebra:
\begin{defi}\label{def:semimodule}
	A \emph{left} semimodule $M$ over a semialgebra $(A,R)$ is a left dg $R$-comodule endowed with a semi-action map
	$A\boxempty_{R}M\to M$ that is unital and associative. Left semimodules over $(A,R)$ clearly form a category that we will denote by $(A,R)\Mod$. 
	Right semimodules and semibimodules are defined similarly.
\end{defi}
\begin{rem}
	Note that any semialgebra $(A,R)$ is naturally a semibimodule over itself.
\end{rem}

We now restrict to the class of semialgebras that will interest us in this paper.
 \begin{defi}\label{def:semialgebrasplit}
 We say a semialgebra $(A, R)$ is \emph{split} if $R$ is cosemisimple and there is a retract $v: A \to R$ of the unit as a map of $R$-bicomodules. The category of split semialgebras will be denoted by $\semialg$.
\end{defi}
 This retract is not assumed to be compatible with the monoid structure. A morphism of split semialgebras is not required to be compatible with the retract. 
 Therefore all choices of a retract on $(A,R)$ lead to isomorphic split semialgebras. 
 Thus we should not think of the retract as meaningful extra data.
 However, it allows us to carry out the bar construction more cleanly later on.

\begin{rem}
 The main reason for the restriction on $R$ is to ensure that cotensor products over $R$ be exact.
There are likely generalizations of many of our results beyond the cosemisimple case but we will not pursue them in this paper.
\end{rem}

\subsection{Split curved coalgebras}
  We will now consider a dual notion to that of a semialgebra.
  This will essentially be a suitable comonoid in bicomodules.
  We first note that any coalgebra homomorphism $C \to R$ turns $C$ into an $R$-bicomodule, so a comonoid in bicomodules is just a relative coalgebra.
  However, in the setting of curved coalgebras (to be recalled shortly) a number of conditions are needed to make this setup precise.
  
  We will restrict ourselves to the case where the subcoalgebra $R$ is the coradical $C_0$ of $C$.
  \begin{defi}
  	A \emph{split coalgebra} is a coalgebra $C$ equipped with a section $\epsilon: C\to C_0$ of the inclusion $C_0 \to C$.
  \end{defi}
  \begin{rem}
  	In many cases (for example when $\ground$ is a perfect field) such a splitting is guaranteed to exist, however (unless $C$ is cocommutative) it need not be canonical, so we always consider it as part of our data.
  \end{rem}
  
  We will now extend this definition to the curved case.
  Recall the following definition from\cite[Section 3.1]{Positselski11}.
 
\begin{defi}\label{def:curvedalg}
	A \emph{curved  algebra} $A=(A,d,h)$ is a graded  algebra supplied with a derivation $d:A\to A$ (a differential) of degree 1 and an element $h\in A^2$ called the \emph{curvature} of $A$, such that $d^2(x)=[h,x]$ and $d(h)=0$ for any $x\in A$.
	
	A \emph{curved morphism} between two curved  algebras $A\to B$ is a pair $(f,b)$ where $f:A\to B$ is a map of graded  algebras of degree zero and $b\in B^1$ so that:
	\begin{enumerate}
		\item $f(d_Ax)=d_Bf(x)+[b,f(x)]$;
		\item $f(h_A)=h_B+d_B(b)+b^2$.
	\end{enumerate}
	Two such morphisms $(f,b)$ and $(g,c)$ are composed as $(g,c) \circ (f,b) =(g\circ f, c+g(b))$.
	In particular, every map
	$(f,b)$ can be decomposed $(f,b)=(\id,b)\circ(f,0)$.
\end{defi}
\begin{rem}
	In this paper we will, in fact, need  \emph{pseudocompact} curved algebras, whose definition is obtained simply by adding the adjective `pseudocompact' to Definition \ref{def:curvedalg}.
\end{rem}  
Dually there are curved coalgebras.

\begin{defi}\label{def:curvedcoa}
A \emph{curved coalgebra} is a coalgebra $C$ equipped with an odd coderivation $d$ and a homogeneous linear function $h: C \to \ground$ of degree 2, called the \emph{curvature}, such that $(C^*, d^*, h^*)$ is a curved pseudo-compact algebra.

A morphism of curved dg coalgebras from $(C, d_{C}, h_{C})$ to $(D, d_{D}, h_{D})$ is given by the data 
$(f,a)$ where $f: C \to D$ is a morphism of graded coalgebras and $a: C \to \ground$ is a linear map of degree 1,
such that $(f^*, a^*)$ is a curved morphisms $D^* \to C^*$.
The composition rule is $(g,b)\circ(f,a) = (g\circ f, b \circ f + a)$.
\end{defi}

To minimize sign issues we will perform most of our computations using curved pseudo-compact algebras. 

  \begin{defi}\label{def:semicoalgebraworking}
	A \emph{split curved coalgebra} is a pair $(C, \epsilon)$ such that
	\begin{itemize}
		\item $C = (C, \Delta_{\ground}, \epsilon_{\ground}, d, h_\ground)$ is a curved coalgebra (over $\ground$) 
		\item the restriction of $d$ to the coradical $C_0 \hookrightarrow C$ is zero,
		\item $\epsilon: C \to C_0$ is a a coalgebra map compatible with the differential $d$, which is left inverse to $i$.
			\end{itemize}
		We will often write simply $C$ for $(C,\epsilon)$ when it does not cause confusion. 
		\end{defi}
It follows from coassociativity that $\Delta_{\ground}$ factors as $C \xrightarrow{\Delta} C \boxempty_{C_0} C \to C \otimes C$. Thus, $\Delta:C\to C \boxempty_{C_0} C$ exhibits $C$ as a comonoid in the $C_0$-bicomodules.
The inclusion $C_0\hookrightarrow C$ provides a coaugmentation of this comonoid.
The differential $d$ is compatible with the $C_0$-bicomodule structure and the comonoid structure given by $\Delta$ and $\epsilon$.
Note also that there is automatically a curvature $h$ with values in $C_0$, obtained by factorizing the curvature $h_{\ground}: C \to \ground$ as $\epsilon \circ (h_{\ground} \otimes \id_{C_0}) \circ \rho_{C}: C \to C \otimes C_0 \to \ground \otimes C_0 \to \ground$, where $\rho_{C}$ is the right coaction.
We define $h$ as $(h_{\ground} \otimes \id_{C_0}) \circ \rho_{C}$.

Dually, a split curved pseudocompact algebra $(A, u)$ consists of a curved pseudocompact algebra $A$ with maximal semisimple quotient $A_0$, a splitting $u: A_0 \to A$ of the quotient map and a structure of $A$ as a monoid in $A_0$-bicomodules, satisfying the dual conditions to Definition \ref{def:semicoalgebraworking}.

\begin{defi}\label{def:curvedmorphism}
	A morphism $(f, a): (C, \epsilon) \to (D, \delta)$ of split curved coalgebras consists of
\begin{itemize}
	\item a morphism $(f,a_\ground)$ of curved coalgebras 
	\item a factorization of $a_\ground$ as the composition $C \xrightarrow{a} D_0 \to \ground$
	\end{itemize}
such that
\begin{itemize}
	\item $\delta \circ f = f \circ \epsilon$,
	\item $f$ and $a$ are $D_0$-bicomodule maps,
\end{itemize}
where the $D_0$-bicomodule structure on $C$ is induced by $f: C_0\to D_0$.

The composition is then defined as  
\[
(g, b)\circ(f,a) = (g\circ f,  b \circ f + g \circ a).
\]

The category of split curved coalgebras will be denoted by $\cusemicoa$.
\end{defi}

Note that the zero coalgebra, equipped with the zero map to itself, gives us an initial object $(0,0)$ in $\cusemicoa$.

Furthermore, if $(C,\epsilon)$ is a split curved coalgebra then we call $(C^*, \epsilon^*)$ a split curved pseudocompact algebra.
A map $(A, u) \to (B, v)$ of split curved pseudocompact algebras is of the form $(f, b)$ with $f: A \to B$ and $b: A_0 \to B$ of degree 1 satisfying all the compatibilities derived from \ref{def:curvedmorphism}.
Composition is given by
$(g,b)\circ(f,a) = (g\circ f, b \circ f + g \circ a).$

Dualization induces a contravariant equivalence of categories between split curved coalgebras and split curved pseudocompact algebras.
Therefore we denote the latter category by $\cusemicoa^{\text {op}}$.
We will liberally use this equivalence as computations are usually much easier to perform for pseudocompact algebras.

We will also consider comodules over split curved coalgebras; these are nothing but comodules over the underlying curved coalgebra, cf. \cite[Section 4.1]{Positselski11}.

\begin{defi}\label{def:comodule}
	Let $C$ be a split curved coalgebra.
A \emph{left $C$-comodule} is a graded $\ground$-module $M$ endowed with a endomorphism $d_M$ of degree $1$ such that
\begin{itemize}
	\item There is a coaction map $M \to C \otimes M$ compatible with $d_C$ and $d_M$,
	\item For all $m \in M$ we have $d^2(m) = h*m$ where $*$ is the  action of the pseudocompact algebra $(C^*)^{\text{op}}$ on $M$  corresponding to the coaction of $C$ on $M$. 
\end{itemize}
\end{defi}

Given two left $C$-comodules $M$ and $N$ we denote by $\Hom_C(M,N)$ the complex of $C$-homomorphisms between them.
It is given by the graded vector space of homomorphisms from $M$ to $N$ which are compatible with differential and coaction. 
The differentials on $M$ and $N$ induce the differential on $\Hom_C(M,N)$ which squares to zero (even if $d_M$ and $d_N$ do not).
Thus left $C$-comodules form a dg category which we denote by $C\Comod$.

\begin{rem}
Note that a comodule over a split curved coalgebra $C$ is a $C_0$-comodule as $C$ is a $C_0$-bicomodule.
\end{rem}

  \subsection{Categories as semialgebras}
  Next we show that (small) dg categories can be understood as semialgebras.
 
 Let $S$ be a set and $\ground[S]$ be its linearization, i.e. the free $\ground$-module with $S$ as a basis. It has a (cosemisimple) coassociative, cocommutative coalgebra structure with $\Delta(s)=s\otimes s$ for any $s \in S$. 
 Then $S$ can be recovered from $\ground[S]$ as the set of grouplike elements, and coalgebra homomorphisms $\ground[S]\to\ground[T]$ correspond to maps of sets $S \to T$.

We now observe that the data of a small dg  category $\C$ over $\ground$ is equivalent to a semialgebra of the form $(V_{\C},\ground[S])$. The set of objects of $\C$ is given by $S$.
A dg functor $\C\to \D$ is a homomorphism of the corresponding semialgebras.

\begin{prop}\label{prop:comodule}
	The category of semialgebras of the form $(V, \ground[S])$ is equivalent to the category of small dg categories.
\end{prop}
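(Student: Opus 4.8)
The plan is to build explicit, mutually inverse constructions between the two categories, with the bulk of the work being a concrete description of $\ground[S]$-bicomodules and of the cotensor product over $\ground[S]$.

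First I would record the standard dictionary between comodules over a grouplike coalgebra and graded vector spaces. Since $\ground[S]$ is spanned by its grouplike elements $s$, with $\Delta(s)=s\otimes s$ and counit $1$, a coaction $\rho\colon M\to \ground[S]\otimes M$ necessarily has finite support, and coassociativity together with the counit axiom force $\rho(m)=\sum_{s\in S} s\otimes\pi_s(m)$ for a family of orthogonal idempotents $\pi_s$ summing to the identity. Hence a left $\ground[S]$-comodule is exactly an $S$-graded dg vector space $M=\bigoplus_{s}M_s$, and likewise on the right. Consequently a $\ground[S]$-bicomodule is an $(S\times S)$-graded dg vector space $V=\bigoplus_{(s,t)}{}_sV_t$, where $s$ records the left coaction and $t$ the right. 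This description is valid for arbitrary (possibly infinite) $S$ because each element has finite support, and since $\ground[S]$ is cosemisimple the cotensor product over it is exact, so the equalizer defining $\boxempty_{\ground[S]}$ may be computed componentwise.

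Next I would compute the monoidal structure explicitly. Viewing the left factor as a right comodule (graded by its second index) and the right factor as a left comodule (graded by its first index), the equalizer yields $V\boxempty_{\ground[S]}V=\bigoplus_{s,u,t}{}_sV_u\otimes{}_uV_t$. Thus a monoid structure on $V$ amounts to a family of composition maps ${}_sV_u\otimes{}_uV_t\to{}_sV_t$, while the unit $\ground[S]\to V$ (recall $\ground[S]$ sits on the diagonal as ${}_s(\ground[S])_s=\ground$) selects distinguished elements $\id_s\in{}_sV_s$. Setting $\Hom_{\C}(t,s):={}_sV_t$, the associativity constraint of the monoid is precisely associativity of composition, and the left and right unit axioms are precisely the identity axioms; compatibility with the differentials gives the dg enrichment. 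This defines a small dg category $\C$ with $\Ob(\C)=S$, and the construction is manifestly reversible.

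Finally I would treat morphisms. A homomorphism $(V,\ground[S])\to(W,\ground[T])$ is a coalgebra map $f\colon\ground[S]\to\ground[T]$ --- equivalently a map of sets $F\colon S\to T$ on grouplikes --- together with a $\ground[T]$-bicomodule map $\phi\colon V\to W$ compatible with the monoid structure. Under the dictionary above, $\phi$ is forced to restrict to maps $\Hom_{\C}(t,s)\to\Hom_{\D}(Ft,Fs)$, compatibility with multiplication is exactly functoriality, and compatibility with the unit is preservation of identities; so semialgebra homomorphisms correspond bijectively and naturally to dg functors. Assembling these observations gives functors in both directions that are strictly inverse once the index conventions are fixed, yielding the claimed equivalence. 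The only genuinely delicate points are fixing the source and target conventions so that the \emph{non-symmetric} order of the factors in ${}_sV_u\otimes{}_uV_t$ matches the direction of composition, and justifying the componentwise computation of the cotensor product via the exactness guaranteed by cosemisimplicity of $\ground[S]$; the remaining verifications are routine matchings of axioms.
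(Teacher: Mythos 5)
Your proposal is correct and follows essentially the same route as the paper's proof: both identify the hom-spaces as the components of $V$ indexed by pairs of elements of $S$ (the paper writes ${}_{s_1}V_{s_2}$ as $\ground_{s_1}\boxempty_{\ground[S]}V\boxempty_{\ground[S]}\ground_{s_2}$, which coincides with your graded-component description), read off composition and identities from the monoid structure, and reverse the construction by taking the direct sum of hom-spaces. Your write-up simply makes explicit the idempotent/grading dictionary and the componentwise computation of the cotensor product, details the paper leaves implicit.
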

\begin{proof}
Given a set $S$ and a dg $\ground[S]$-bicomodule $V$ together with a monoid structure map $V\boxempty_{\ground[S)]}V\to V$, we construct a dg category $\mathcal C$ by setting $\operatorname{Ob}(\mathcal C)=S$. 
Note that for any $s\in S$ the inclusion $\{s\} \subset S$ determines an inclusion of coalgebras $\ground\hookrightarrow \ground[S]$ and thus, the structure of a $\ground[S]$-comodule on $\ground$; we will denote it by $\ground_s$. Then for $s_1,s_2\in \Ob(\C)$ set $\Hom(s_1,s_2):=\ground_{s_1}\boxempty_{\ground[S]}V\boxempty_{\ground[S)]}{\ground_{s_2}}$. The composition $\Hom(s_1,s_2)\otimes\Hom(s_2,s_3) \to \Hom(s_1, s_3)$ is determined by the monoid structure on $V$.

Conversely, given a dg category $\mathcal C$ with a set of objects $\Ob(\C)$, we define \[V_\C:=\bigoplus_{s_1,s_2\in \Ob(\C)}\Hom(s_1,s_2).\] 
The space $V$ has a natural structure of a $\ground[\Ob(\C)]$-bicomodule $V_\C\to \ground[\Ob(\C)]\otimes V_\C\otimes \ground[\Ob(\C)]$ defined on each summand by the composition
\[
\Hom(s_1,s_2)\hookrightarrow V_\C\cong \ground_{s_1}\otimes V_\C\otimes \ground_{s_2}\to  \ground[\Ob(\C)]\otimes V_\C\otimes \ground[\Ob(\C)].
\]
The monoid structure on $V_\C$ is determined by the composition in $\mathcal C$.

The statement about the functors is likewise straightforward.
\end{proof}	
\begin{rem}\label{rem-order}
	The reader should note, that our convention for the composition of maps sends $(f,g)$ to $f \circ g$ rather than to $g \circ f$ (which is the usual convention in category theory).
	This is more natural in the context of bicomodules.
\end{rem}
We note that the semialgebra corresponding to a dg category is split if and only if all its endomorphism spaces contain non-zero morphisms.

Let $\C$ be a dg category represented by a semialgebra $(V_{\C}, \ground[S])$. 
We will call a left $(V_{\C}, \ground[S])$-semimodule simply a $\C$-module.
 		
Recall that given a dg category $\C$, a (dg) $\C$-module is usually defined as a $\ground$-linear functor $\C \to \Ch$. This agrees with our nomenclature:
\begin{prop}
	The data of a dg functor $\C\to \Ch$ is equivalent to that of a left $(V_{\C}, \ground[{\Ob(\C)}])$-semimodule.
\end{prop}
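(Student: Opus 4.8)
The plan is to unwind both sides of the claimed equivalence into the same concrete data, namely an $\Ob(\C)$-indexed family of dg vector spaces together with chain maps implementing the action of morphisms, and then to match up the structural axioms on the two sides.

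First I would analyse left $\ground[S]$-comodules, where $S=\Ob(\C)$. Since $\ground[S]=\bigoplus_{s\in S}\ground_s$ is the cosemisimple, cocommutative coalgebra on the grouplike set $S$, a left $\ground[S]$-comodule $M$ is the same thing as an $S$-graded dg vector space: the coaction $M\to\ground[S]\otimes M$ forces a decomposition $M=\bigoplus_{s\in S}M_s$ with $M_s:=\ground_s\boxempty_{\ground[S]}M$, and conversely any such family assembles into a comodule with coaction $m\mapsto s\otimes m$ on $M_s$. This already recovers the object part of a functor: one sets $F(s):=M_s$.

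Next I would compute the cotensor product appearing in the semi-action. Using the bicomodule structure on $V_\C=\bigoplus_{s_1,s_2}\Hom(s_1,s_2)$, which places the summand $\Hom(s_1,s_2)$ in left-degree $s_1$ and right-degree $s_2$, together with the decomposition $M=\bigoplus_s M_s$, and using that $\ground[S]$ is cosemisimple so that cotensoring over it is exact and computed componentwise, the cotensor product collapses to
\[
V_\C\boxempty_{\ground[S]}M\;\cong\;\bigoplus_{s_1,s_2\in S}\Hom(s_1,s_2)\otimes M_{s_2},
\]
with residual left $\ground[S]$-comodule grading given by $s_1$. A semi-action map $V_\C\boxempty_{\ground[S]}M\to M$ of $\ground[S]$-comodules is therefore exactly a family of chain maps $\Hom(s_1,s_2)\otimes M_{s_2}\to M_{s_1}$, one for each pair $(s_1,s_2)$; taking into account the composition-order convention of Remark \ref{rem-order}, this is precisely the data assigning to each object $s$ the dg vector space $F(s)=M_s$ and to each morphism its action on these spaces, and the fact that it is a $\ground[S]$-comodule map is automatic from the grading bookkeeping. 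That this map is a morphism of dg vector spaces says exactly that each structure map is a chain map, i.e.\ that $F$ is compatible with differentials and hence dg on Hom complexes. Finally, unitality of the semi-action corresponds to $F(\id_s)=\id_{F(s)}$, and associativity of the semi-action — written out through the monoid structure on $V_\C$, which is the composition of $\C$ — corresponds to compatibility of $F$ with composition; the translation of morphisms of semimodules into natural transformations is equally mechanical and runs in parallel to Proposition \ref{prop:comodule}.

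The main obstacle is not conceptual but bookkeeping. One must correctly identify $V_\C\boxempty_{\ground[S]}M$ and keep careful track of the left/right comodule gradings together with the composition-order convention of Remark \ref{rem-order}, so that the action maps land in the correct graded components and the associativity axiom matches functoriality rather than its opposite. Once the cosemisimplicity of $\ground[S]$ is invoked to guarantee that the cotensor product is exact and computed summand by summand, everything else is a routine verification of matching axioms.
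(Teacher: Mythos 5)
Your proof is correct and follows essentially the same route as the paper's: both set $F(s)=\ground_s\boxempty_{\ground[S]}M$ (your $M_s$), restrict the semi-action to components to obtain the maps $\Hom_\C(s,t)\otimes F(s)\to F(t)$, and reverse the process via $M=\bigoplus_{s}F(s)$. The only difference is that you spell out the intermediate identifications (comodules over $\ground[S]$ as $S$-graded dg vector spaces, and the explicit collapse of the cotensor product) which the paper leaves as ``straightforward inspection.''
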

\begin{proof}
Let $\C$ be a dg category and $(V_{\C}, \ground[S])$ be its corresponding semialgebra.	Given a $(V_{\C}, \ground[S])$ -semimodule $M$ with a structure map $V_{\C}\boxempty_{\ground[S]}M\to M$ and $s\in S=\Ob(\C)$ we define a functor $F:\C\to \Ch$ by $F(s)=\ground_s\boxempty_{\ground[S]}M$. For two objects $s,t\in S$ the semi-action map $V_{\C}\boxempty_{\ground[S]}M\to M$ restricts to
\[
(\ground_{t}\boxempty_{\ground[S]}V_{\C}\boxempty_{\ground[S]}\ground_s)\otimes(\ground_s\boxempty_{\ground[S]}M)\to \ground_t\boxempty_{\ground[S]}M
\]
which can be rewritten as $\Hom_{\C}(s,t)\otimes F(s)\to F(t)$ or $\Hom_{\C}(s,t)\to\Hom( F(s),F(t))$. A straightforward inspection shows that this map preserves compositions and identities.

Conversely, given a dg functor $\C\to \Ch$ we define $M:=\bigoplus_{s\in S}F(s)$; then clearly $M$ is naturally a (left) $\ground[S]$-bicomodule and for $s,t\in S$ the homomorphisms $\Hom_{\C}(s,t)\to\Hom( F(s),F(t))$ combine to give a semi-action map $V_{\C}\boxempty_{\ground[S]}M\to M$.
\end{proof}
\begin{rem}
	For an earlier appearance of categories as monoids (but not as bicomodules) see Section 5 of \cite{Lefevre03}. Even earlier linear categories as  bicomodules make an appearence in Aguiar's PhD thesis \cite[Section 2.3]{Aguiar97}, as was pointed out to us by Arne Mertens  after this paper was completed.
\end{rem}
\section{Koszul duality for categories}\label{sect:main}

\subsection{Bar and cobar construction for (co)algebras}
We begin by reviewing the Koszul duality between dg algebras and curved conilpotent coalgebras following Positselski  \cite{Positselski11}.

We denote by $\dga$ the category of dg algebras over $\ground$ and by $\dga/\ground$ the category of augmented dg algebras. 
Next, $\dgco$ is the category of conilpotent dg coalgebras and $\cuco$ is the category of conilpotent curved coalgebras and $\cuco_*$ the category obtained from $\cuco$ by adding a final object $*$. Let $i: \dgco \to \cuco$ be the inclusion functor.

There is a model structure on $\cuco_*$ and the following theorem holds.
\begin{theorem}\label{thm-barcobaralgebra}
The (reduced) cobar and bar construction provides adjunctions
\[\Omega: \cuco_* \rightleftarrows \dga: \Ba\]
and
\[\Omega i: \dgco \rightleftarrows \dga/\ground: \Ba\]
Composing with the adjunction $U: \dga/\ground \rightleftarrows \dga: (-)\oplus \ground$ we obtain another adjunction
\[U\Omega: \dgco \rightleftarrows \dga: \Ba_{nr}\]
between the reduced cobar and the nonreduced bar construction $\Ba_{nr} = \Ba(- \oplus \ground)$. 

Moreover, all of these adjunctions are Quillen if we choose suitable model structures.\qed
\end{theorem}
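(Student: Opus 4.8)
The plan is to realize both adjunctions through the classical mechanism of twisting cochains, and then to identify the relevant model structures and verify the Quillen conditions, leaning on Positselski's analysis for the technical heart. Since the statement is explicitly a recollection from \cite{Positselski11}, the aim is to organize the argument rather than to reprove its most delicate part.

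First I would record the explicit constructions. For a dg algebra $A$, the bar construction $\Ba A$ has as underlying graded coalgebra the cofree conilpotent (tensor) coalgebra $\mathrm{T}^{\mathrm c}(sA)$ on the shift of $A$; its coderivation is assembled from the internal differential of $A$ and the multiplication, while the unit $\ground \to A$ produces the curvature $h$, a degree $2$ functional on $\mathrm{T}^{\mathrm c}(sA)$. Dually, for a conilpotent curved coalgebra $C$ the reduced cobar construction $\Omega C$ is the tensor algebra $\mathrm{T}(s^{-1}\overline C)$ on the desuspended coaugmentation coideal, with differential built from the internal coderivation, the comultiplication, and the curvature. Since the zero algebra is terminal in $\dga$ and $\Ba 0$ is degenerate, one sees that $\Ba$ must land in $\cuco_*$ with $\Ba 0 = *$; this is exactly why the formal final object is adjoined.

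Next I would exhibit the adjunction via the curved convolution algebra. For a curved coalgebra $C$ and a dg algebra $A$, the complex $\Hom(C,A)$ is a curved dg algebra under the convolution product $f\star g = m_A\circ(f\otimes g)\circ\Delta_C$, with curvature the composite $C\xrightarrow{h_C}\ground\xrightarrow{1_A}A$. A Maurer--Cartan (twisting) cochain $\tau\in\MC(\Hom(C,A))$ is precisely the data needed to extend to a dg algebra map $\Omega C \to A$ (determined freely on the generators $s^{-1}\overline C$ by $\tau$, the MC equation encoding compatibility with the differential and curvature), and simultaneously to a curved coalgebra map $C\to\Ba A$ (corestricted freely to the cogenerators $sA$ by $\tau$). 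These correspondences are natural in $C$ and $A$, giving
\[
\Hom_{\dga}(\Omega C, A)\;\cong\;\MC(\Hom(C,A))\;\cong\;\Hom_{\cuco_*}(C,\Ba A),
\]
hence the first adjunction. Restricting to conilpotent (uncurved) coalgebras $C$, the curvature of $\Hom(C,A)$ vanishes and $\tau$ becomes an honest twisting cochain valued in the augmentation ideal, which yields the second adjunction with augmented algebras. The third adjunction $U\Omega\dashv\Ba_{nr}$ (where $\Omega$ is precomposed with $i$) is then purely formal: it is the composite of $\Omega i\dashv\Ba$ with the given $U\dashv(-)\oplus\ground$, and $\Ba_{nr}=\Ba((-)\oplus\ground)$ by definition.

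Finally I would address the Quillen property. On $\dga$ I take the projective model structure (weak equivalences the quasi-isomorphisms, fibrations the surjections), and on $\cuco_*$ I use Positselski's model structure, whose weak equivalences are defined so that $\Omega$ carries them to quasi-isomorphisms and whose cofibrations are controlled by the coradical filtration data. With these choices I would verify the Quillen condition in the form that the right adjoint $\Ba$ preserves fibrations and acyclic fibrations: one half is essentially built into the definition of the weak equivalences on the coalgebra side, and the statement about fibrations is a direct check against the explicit tensor-coalgebra description. \emph{The main obstacle lies precisely here}: the adjunction itself is formal once the twisting-cochain dictionary is in place, but matching it to the model structures requires the nontrivial fact that the weak equivalences on $\cuco_*$ are not plain quasi-isomorphisms but the class making $\Omega$ homotopically meaningful, together with careful bookkeeping of the curvature and of the adjoined final object $*$. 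For this delicate step I would follow the arguments of \cite{Positselski11} rather than reprove them.
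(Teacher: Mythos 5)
Your overall architecture is the right one, and it is essentially the paper's: the paper states this theorem with no proof beyond citations (the adjunctions from Section 6.10 of \cite{Positselski11}, the model structures from Section 9.3, modified so that weak equivalences and cofibrations on $\cuco_*$ are created by $\Omega$ and so that $\Ba(0)=*$, $\Omega(*)=0$), and the twisting-cochain/MC dictionary you use is exactly what the paper records as Lemma \ref{lem:conilpotentadjoint}. However, there is one concrete error in your recollection of the constructions, and it sits at the heart of the first adjunction. For a non-augmented dg algebra $A$, the curved bar construction $\Ba A$ is \emph{not} $\mathrm{T}^{\mathrm c}(sA)$ with a curvature ``produced by the unit'': the tensor coalgebra on all of $sA$ with the standard bar differential already squares to zero (by associativity and Leibniz), so it carries no curvature at all --- it is the \emph{non-reduced} bar construction $\Ba_{nr}(A)$, which is the right adjoint in the \emph{third} adjunction $U\Omega \dashv \Ba_{nr}$, not the first. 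The curved object $\Ba A$ is the tensor coalgebra $\mathrm{T}^{\mathrm c}(s\bar A)$ on $\bar A = A/\ground\cdot 1$, defined after choosing a linear splitting $A \cong \bar A \oplus \ground$ of the unit, with curvature given by the $\ground$-components of the multiplication and the differential relative to that splitting; the relation between the two is the uncurving isomorphism $\Ba_{nr}(A) \cong \Eta\,\Ba(A)$ of Proposition \ref{prop:reducedunreduced}.

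This is not merely a notational slip: taken literally, your model breaks the Hom--MC bijection you rely on. A morphism of curved coalgebras is a pair $(f,a)$, so a curved map $C \to \mathrm{T}^{\mathrm c}(sA)$ would amount to a degree-one map $\bar C \to A$ (by cofreeness) \emph{plus} an extra functional $\bar C \to \ground$, which overcounts against $\Hom_{\dga}(\Omega C, A) \cong \MC(\Hom(\bar C, A))$; with the correct $\mathrm{T}^{\mathrm c}(s\bar A)$ the two data recombine exactly as $\bar C \to \bar A \oplus \ground \cong A$, which is how the MC set in Lemma \ref{lem:conilpotentadjoint} involves all of $A$ (note also that the MC elements live in $\Hom(\bar C, A)$, not $\Hom(C,A)$). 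Finally, on the Quillen point: since the paper's model structure on $\cuco_*$ has both weak equivalences and cofibrations defined via $\Omega$ (cofibrations being the injections, which $\Omega$ takes to semi-free extensions), the natural verification is that the \emph{left} adjoint preserves (trivial) cofibrations --- checking instead that $\Ba$ preserves (trivial) fibrations is formally equivalent but harder, because fibrations of curved coalgebras are only described implicitly (via lifting, cf.\ Positselski's Section 9.3 Lemma 2), and nothing about them is ``built into'' the definition of the weak equivalences.
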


The construction of the adjunctions can be found in Section 6.10 in \cite{Positselski11}.

A model structure on curved coalgebras is discussed in Section 9.3 of \cite{Positselski11}.
Positselski does not consider $\dga$ with its usual model category structure on the right hand side, but one may modify his construction of the model structure on curved coalgebras in a natural way.

We note that the reduced bar construction is not defined on the dg algebra $0$, which is the final object in dg algebras.
This may be remedied by defining $B(0) = *$ and $\Omega(*)=0$ by hand.
	
Then we may define the model structure on $\cuco$ by defining cofibrations and weak equivalences by their images under the functor $\Omega$.
Compared to Positselski's model structure on $\cuco$ this only changes which maps to the final object are considered cofibrations and weak equivalences.
This issue will be revisited when we generalize to pointed curved coalgebras in Section \ref{sect:coalgebramodel}. 

It follows from the proof of the theorem that morphism spaces in these adjunctions can be expressed as Maurer-Cartan sets. 

\begin{lem}\label{lem:conilpotentadjoint}
Let $A \in \dga/\ground$ and $C \in \dgco$ be given. Write $\overline A$ for the kernel of the augmentation and $\overline C$ for the cokernel of the coaugmentation.
Then we have
\[\Hom_{\dga/\ground}(\Omega(C), A) \cong \MC(\Hom(\overline C, \overline A)) \cong \Hom_{\dgco}(C, \Ba(A)).\]
Let now $A \in \dga$ and $C \in \cuco$. Then we have
\[\Hom_{\dga}(\Omega(C), A) \cong \MC(\Hom(\overline C, A)) \cong \Hom_{\cuco}(C, \Ba(A)).\]
\pushQED{\qed} 
Applying the first equivalences to $A \oplus \ground$ we have 
\[ \MC(\Hom(\overline C, A)) \cong \Hom_{\dgco}(C, \Ba_{nr}(A)) \cong \Hom_{\cuco}(C, \Ba(A)).\qedhere\]
\popQED
\end{lem}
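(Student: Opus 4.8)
The plan is to identify, in each displayed chain, all of the morphism sets with a single set of \emph{twisting cochains}, realized as the Maurer--Cartan set of a convolution algebra. Given $C$ and $A$, recall that the graded space $\Hom(\overline C, \overline A)$ carries a convolution product $f \cup g := \mu_A \circ (f \otimes g) \circ \Delta_C$ and a differential induced by $d_C$ and $d_A$; coassociativity of $\Delta_C$ and associativity of $\mu_A$ make this a graded algebra, and compatibility of the differentials makes it a dg algebra. In the curved situation one instead uses $\Hom(\overline C, A)$, which is a \emph{curved} algebra in the sense of Definition \ref{def:curvedalg}: its curvature is the degree-two element $c \mapsto h(c)\, 1_A$ induced by the curvature $h$ of $C$. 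A Maurer--Cartan element is then a degree-one map $\tau$ with $d\tau + \tau \cup \tau = 0$ (uncurved case) or with $d\tau + \tau \cup \tau + h\cdot 1_A = 0$ (curved case), and in either case such $\tau$ are exactly the maps that will be matched with honest (co)algebra morphisms.

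First I would recall the explicit shape of the functors from Theorem \ref{thm-barcobaralgebra}: $\Omega(C)$ is the free graded algebra on the shifted coideal $\overline C[-1]$, with differential assembled from $d_C$ and $\Delta_C$ (and, in the curved case, $h$), while $\Ba(A)$ is the cofree conilpotent graded coalgebra on the shifted augmentation ideal $\overline A[1]$, with differential assembled from $d_A$ and $\mu_A$. By the universal property of the free algebra, a morphism of graded augmented algebras $\Omega(C) \to A$ is the same as a degree-zero map $\overline C[-1] \to \overline A$, i.e.\ a degree-one map $\tau : \overline C \to \overline A$; a direct check shows that $\tau$ is compatible with the cobar differential precisely when it is a Maurer--Cartan element. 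Dually, by the universal property of the cofree conilpotent coalgebra, a morphism $C \to \Ba(A)$ is the same as its composite with the projection to cogenerators $\overline A[1]$, again a degree-one map $\overline C \to \overline A$, and compatibility with the bar differential is again the Maurer--Cartan equation. This gives the first chain; the curved chain is formally identical, the only difference being that the curvature of $C$ now contributes the inhomogeneous term of the curved Maurer--Cartan equation.

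The last chain follows by specializing the first chain to the augmented algebra $A \oplus \ground$. Since $\overline{A \oplus \ground} = A$ and $\Ba_{nr}(A) = \Ba(A \oplus \ground)$ by definition, the first chain yields $\MC(\Hom(\overline C, A)) \cong \Hom_{\dgco}(C, \Ba_{nr}(A))$, while the curved chain, applied to $C$ regarded in $\cuco$ via $i$ (so that its vanishing curvature reduces the curved equation to the ordinary one), yields $\MC(\Hom(\overline C, A)) \cong \Hom_{\cuco}(C, \Ba(A))$ with the same convolution algebra; splicing the two isomorphisms gives the claim.

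The main obstacle is entirely computational: one must verify that the differential-compatibility conditions coincide with the Maurer--Cartan equations on the nose. This amounts to tracking the Koszul signs produced by the shifts $[\pm 1]$ and, in the curved case, checking that the curvature $h$ of $C$ produces exactly the term $h\cdot 1_A$. As these verifications are precisely the computations underlying the construction of the adjunctions in \cite{Positselski11}, I would carry out the identifications at the level of twisting cochains and refer to \cite{Positselski11} for the sign-level bookkeeping, rather than reproduce it.
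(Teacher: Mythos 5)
Your proposal is correct and takes essentially the same approach as the paper: the paper offers no independent proof of this lemma, deriving the identifications directly from the bar--cobar constructions of Theorem \ref{thm-barcobaralgebra} (cited to Section 6.10 of \cite{Positselski11}, which is exactly the twisting-cochain/universal-property argument you sketch, with the convolution-algebra curvature induced by $h_C$), and it obtains the last chain by substituting $A \oplus \ground$ into the first, just as you do. One caveat: your phrase ``formally identical'' undersells the curved chain --- there the twisting cochains land in $A$ rather than $\overline A$, and the extra component $\overline C \to \ground\cdot 1_A$ of such a cochain matches the second member $a$ of a curved morphism $(f,a): C \to \Ba(A)$, a structural (not merely sign-level) difference --- but since this bookkeeping is precisely what the cited construction in \cite{Positselski11} supplies, nothing in your argument fails.
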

We will abuse notation and identify $\Omega$, $U \Omega$ and $\Omega i$.

We will see in Corollary \ref{cor:uncurvingcoalg} that the inclusion $i: \dgco \to \cuco_*$ has a right adjoint $\Eta$.
It follows immediately that there is a natural isomorphism of functors $\Eta \circ \Ba \cong \Ba_{nr}$, as these are both right adjoint to $ \Omega \circ i$.
We may then summarize the situation in the following diagram which is commutative in the sense that the two paths formed by composing left adjoint functors lead to isomorphic functors and similarly for right adjoint functors.
\[	\xymatrix@R+1pc@C+1pc{
	\dga/\ground \ar@<-.5ex>_{\Ba}[r] \ar@<-.5ex>_U[d]
	&	\dgco\ar@<-.5ex>_\Omega[l] \ar@<-.5ex>_i[d]\\
	\dga\ar@<-.5ex>_{\Ba}[r] \ar@<-.5ex>_{\oplus \ground}[u] \ar^{\Ba_{nr}}[ru]
	&\cuco\ar@<-.5ex>[l]_\Omega\ar@<-.5ex>_\Eta[u]
}
\]

\subsection{Uncurving}\label{sect:uncurving}
 The inclusion of dg coalgebras in curved coalgebras admits a right adjoint, called the `uncurving' functor. 
We will begin by explicitly constructing the uncurving functor  for curved algebras. 
The same construction will apply to curved pseudocompact and split curved pseudocompact algebras and thus, by dualization to (split) curved  coalgebras.

\begin{defi}\label{defi:uncurvedalgebra}
Given a curved algebra $A$ we define the associated (uncurved) dg algebra $\Eta A$ as follows. The underlying graded algebra of $\Eta A$ is $A\langle\eta\rangle$ where $\eta$ is an element of degree 1. The differential $d^{\Eta}$ on $\Eta A$ is defined by the formulas: 
\begin{enumerate}
	\item $d^{\Eta}a=da-[\eta,a], a \in A\subset \Eta A$.
	\item $d^{\Eta}(\eta)=h-\eta^2$.
\end{enumerate}	
\end{defi}
To motivate this definition let us consider first $A\langle\eta\rangle$ with differential $d$ on $A\subset A\langle\eta\rangle$ and such that $h-d\eta+\eta^2=0$. 
It is easy to see that $A\langle\eta\rangle$ is still a curved algebra with the same curvature $h$ as $A$ and that $-\eta$ is an MC element in it, i.e.\ it satisfies the equation $h+d(-\eta)+(-\eta)^2=0$. 
Thus $\Eta A$ is the twisting of $A\langle\eta\rangle$ by $-\eta$ and, as such, is a dg (uncurved) algebra.

\begin{rem}\label{rem:curvedifuncurved}
	In fact, a simple computation shows that the data $(A, d, h)$ (where $A$ is a graded algebra, $d$ a derivation of degree 1 and $h \in A^2$) forms a curved algebra if and only if $\Eta A$ is a dg algebra, i.e. $d^{\Eta}$ squares to 0.
\end{rem}

Given another curved algebra $B$ and a curved map $(f,b):A\to B$ the induced map $f_b:\Eta A\to \Eta A$ is equal to $f:A\to B\subset \Eta B$ when restricted to $A\subset \Eta A$ and $f_b(\eta_A)=b+\eta_B$. 
\begin{lem}\label{lem:curved}
	$f_b$ is a dg map.
\end{lem}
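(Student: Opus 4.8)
The plan is to exploit that $f_b$ is, by construction, a homomorphism of graded algebras: it is the unique extension to the free product $A\langle\eta_A\rangle$ of the algebra map $f$ on $A$ together with the assignment $\eta_A \mapsto b + \eta_B$. Since both $d^{\Eta}_A$ and $d^{\Eta}_B$ are derivations and $f_b$ is multiplicative, the two composites $f_b \circ d^{\Eta}_A$ and $d^{\Eta}_B \circ f_b$ are each $f_b$-twisted derivations in the appropriate sense; consequently they agree as soon as they agree on a generating set. It therefore suffices to verify the identity $f_b(d^{\Eta}_A x) = d^{\Eta}_B(f_b(x))$ for $x \in A$ and for $x = \eta_A$.

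For $x = a \in A$ I would expand the left-hand side as $f_b(d^{\Eta}_A a) = f_b(da - [\eta_A, a]) = f(da) - [b + \eta_B, f(a)]$, using that $da \in A$ and that $f_b$ respects brackets, and the right-hand side as $d^{\Eta}_B(f(a)) = d_B f(a) - [\eta_B, f(a)]$. Substituting the first curved-morphism identity $f(d_A a) = d_B f(a) + [b, f(a)]$ cancels the $[b, f(a)]$ terms and matches the two sides. For $x = \eta_A$ the computation is analogous but invokes the curvature: I would expand $f_b(d^{\Eta}_A \eta_A) = f(h_A) - (b + \eta_B)^2$ and compare it with $d^{\Eta}_B(b + \eta_B) = d_B b - [\eta_B, b] + h_B - \eta_B^2$, then rewrite $f(h_A)$ by the second curved-morphism identity $f(h_A) = h_B + d_B(b) + b^2$.

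The main obstacle is the sign bookkeeping in this last step. One must expand $(b + \eta_B)^2 = b^2 + \eta_B^2 + (b\eta_B + \eta_B b)$ and observe that, since $b$ and $\eta_B$ are both of degree $1$, the cross term equals the graded commutator, and moreover $[b,\eta_B] = [\eta_B, b]$ because graded commutators of odd elements are symmetric. With this identification in place, the $b^2$, the $\eta_B^2$, and the commutator contributions all cancel against the terms coming from $f(h_A)$ and $d^{\Eta}_B(b)$, leaving the two sides equal. Since the remaining verifications are purely formal once the bracket conventions are fixed, this completes the argument.
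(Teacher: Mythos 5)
Your proposal is correct and follows essentially the same route as the paper: check the identity $f_b \circ d^{\Eta}_A = d^{\Eta}_B \circ f_b$ on the generators $a \in A$ and $\eta_A$, invoking conditions (1) and (2) of the definition of a curved morphism together with the symmetry $[b,\eta_B]=[\eta_B,b]$ of the graded commutator of odd elements. The only difference is that you make explicit the (correct) reduction to generators via multiplicativity of $f_b$ and the derivation property, which the paper leaves implicit.
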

\begin{proof}
	Let $a\in A\subset \Eta A$. Then
	\[
	d^{\Eta B}f_b(a)=d_Bf(A)-[\eta_B, f(a)].
	\]
	Furthermore,
	\[
	f_b(d^{\Eta A}a)=f_b(d_Aa-[\eta_A,a])=f(d_Aa)-[f_b(\eta_A),f(a)]=f(d_Aa)-[b+\eta_A,f(a)]
	\]
	and taking into account condition (1) of Definition \ref{def:curvedalg}, we conclude that $d^{\Eta B}f_b(a)=f_b(d^{\Eta A}a)$.
	
	Similarly we have
	\[
	d^{\Eta B}f_b(\eta_A)=d^{\eta_B}(b+\eta_B)=d_Bb-[\eta_B,b]+h_B-\eta^2_B
	\]
	Furthermore,
	\[
	f_b(d^{\Eta A}(\eta_A))=f_b(h_A-\eta^2_A)=f(h_A)-(b+\eta_B)^2=f(h_A)-b^2-[b,\eta_B]-\eta_B^2
	\]
	and taking into account condition (2) of Definition \ref{def:curvedalg} and the identity $[b,\eta_B]=[\eta_B,b]$, we conclude that $d^{\Eta B}f_b(\eta_A)=f_b(d^{\Eta A}(\eta_A))$.
\end{proof}
If $(A,u)$ is a split curved pseudo-compact algebra the same arguments go through: 
The underlying pseudocompact algebra for $\Eta A$ is $A\langle \langle\eta \rangle\rangle$ obtained from $A$ by freely adjoining a power series generator $\eta$.
The splitting is given by composing $u$ with the inclusion $A \to A\langle \langle \eta \rangle \rangle$.
The differential $d^\Eta$ is defined by the same formula as above and Lemma \ref{lem:curved} holds  with the same proof. 
In the rest of this subsection
we will formulate and prove results for curved algebras but they have obvious versions, with the same proofs, in the split pseudocompact case.

\begin{prop}\label{prop:uncurving}
	The uncurving functor $A\mapsto \Eta A$, $(f,b) \mapsto f_b$ is left adjoint to the inclusion functor from  dg algebras to curved  algebras.                         
\end{prop}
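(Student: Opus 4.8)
The plan is to prove the adjunction by producing, for a curved algebra $A$ and a dg algebra $B$, a natural bijection
\[
\Hom_{\dga}(\Eta A, B)\;\cong\;\{\text{curved morphisms } A\to B\},
\]
where $B$ is regarded as a curved algebra with zero curvature and morphisms on the right are as in Definition \ref{def:curvedalg}. The starting point is that, by Definition \ref{defi:uncurvedalgebra}, the underlying graded algebra of $\Eta A$ is the free extension $A\langle\eta\rangle$. Hence a graded algebra homomorphism $g\colon \Eta A\to B$ is determined uniquely by the pair $(f,b)$, where $f:=g|_A\colon A\to B$ is a graded algebra map and $b:=g(\eta)\in B^1$ is the image of the degree-one generator; conversely any such pair extends uniquely to a graded algebra map $\Eta A\to B$. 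It therefore remains only to identify, in terms of $(f,b)$, exactly when $g$ commutes with the differentials, and this I would check on the two kinds of generators.

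For $a\in A$ the defining formula $d^\Eta a=da-[\eta,a]$ yields $g(d^\Eta a)=f(da)-[b,f(a)]$, so that $g(d^\Eta a)=d_Bg(a)$ is equivalent to $f(da)=d_Bf(a)+[b,f(a)]$, which is condition (1) of Definition \ref{def:curvedalg}. For the generator $\eta$ the formula $d^\Eta\eta=h-\eta^2$, together with $h_B=0$, gives $g(d^\Eta\eta)=f(h)-b^2$, so that $g(d^\Eta\eta)=d_Bg(\eta)$ becomes $f(h)=d_Bb+b^2$, which is condition (2) in the case $h_B=0$. Consequently $g$ is a morphism of dg algebras precisely when $(f,b)$ is a curved morphism $A\to B$, and $g\mapsto(g|_A,g(\eta))$ is the required bijection. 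These computations are minor variants of Lemma \ref{lem:curved} and Remark \ref{rem:curvedifuncurved}, so I expect no difficulty beyond careful sign bookkeeping in the graded commutators, using $[b,\eta]=[\eta,b]$ as in Lemma \ref{lem:curved}.

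It remains to verify naturality in both variables, which is a routine comparison with the composition rule of Definition \ref{def:curvedalg}. Naturality in $B$ along a dg map $\psi\colon B\to B'$ corresponds to postcomposition $(f,b)\mapsto(\psi f,\psi(b))$, matching $(\psi,0)\circ(f,b)$; naturality in $A$ along a curved morphism $(\phi,c)\colon A'\to A$ corresponds to precomposition by $\phi_c=\Eta(\phi,c)$, and unwinding $\phi_c(\eta_{A'})=c+\eta_A$ reproduces $(f,b)\circ(\phi,c)=(f\phi,\,b+f(c))$. Equivalently, one may exhibit the unit $(j,\eta)\colon A\to \Eta A$, with $j$ the inclusion $A\hookrightarrow A\langle\eta\rangle$ (here the two curved-morphism conditions hold because $\Eta A$ has zero curvature and $d^\Eta\eta+\eta^2=h$), together with the counit $\Eta B\to B$ that sends $\eta\mapsto 0$ and restricts to the identity on $B$, and then check the triangle identities. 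Either way the decisive point is already contained in the two generator computations above: the two defining conditions of a curved morphism are precisely the two halves of the differential $d^\Eta$, so there is no genuine obstacle. The same argument applies verbatim in the split pseudocompact setting, with $A\langle\eta\rangle$ replaced by the power-series extension $A\langle\langle\eta\rangle\rangle$.
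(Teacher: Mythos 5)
Your proposal is correct and follows essentially the same route as the paper: both identify a dg algebra map $\Eta A\to B$ with the pair $(f,b)=(g|_A,g(\eta))$ via freeness of $A\langle\eta\rangle$, and both verify on the two kinds of generators that compatibility with $d^\Eta$ is precisely conditions (1) and (2) of Definition \ref{def:curvedalg} with $h_B=0$ (the paper handles the converse direction by citing Lemma \ref{lem:curved}). Your added naturality/unit--counit discussion is extra detail the paper leaves implicit, but it is accurate and consistent with the composition rule and with the remark following Corollary \ref{cor:uncurvingcoalg}.
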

\begin{proof}
	Let $B$ be a dg algebra. Then a dg algebra map $f:\Eta A\to B$ is determined by a map $\tilde{f}:A\to B$ and the image of $\eta$ in $B$ that we denote by $b\in B$. We claim that $(\tilde{f},b)$ is a curved map $A\to B$. Indeed, for $x\in A$ we have 
	$f(d^{\Eta}x)=f(dx-[\eta, x])=df(x)$ or $\tilde{f}(dx)=[b,\tilde{f}(x)]+df(x).$ 
	
	Similarly, $f(d^{\Eta}\eta)=f(h-\eta^2)=f(h)-b^2=d(b)$. Our claim is proved.
\end{proof}	
The following result is immediate by the duality between coalgebras and pseudocompact algebras.
\begin{cor}\label{cor:uncurvingcoalg}
	The inclusion of (split) dg coalgebras into (split) curved  coalgebras has right adjoint. We will denote this functor by the same symbol $\Eta$ as in the algebra case.\qed
\end{cor}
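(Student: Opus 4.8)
The plan is to deduce this formally from Proposition \ref{prop:uncurving} by transporting the adjunction across the duality between coalgebras and pseudocompact algebras. Recall that $\ground$-linear duality provides contravariant equivalences identifying (split) dg coalgebras with (split) dg pseudocompact algebras, and (split) curved coalgebras with (split) curved pseudocompact algebras, as recorded after Definition \ref{def:curvedmorphism}. Under these equivalences the inclusion $i$ of dg coalgebras into curved coalgebras corresponds to the inclusion $j$ of dg pseudocompact algebras into curved pseudocompact algebras.

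First I would invoke Proposition \ref{prop:uncurving}, which---together with the observation preceding it that its proof applies verbatim in the (split) pseudocompact setting---exhibits the uncurving functor $\Eta$ as a \emph{left} adjoint to $j$. The only remaining categorical input is the elementary fact that passing to opposite categories interchanges left and right adjoints: an adjunction $\Eta \dashv j$ yields $j^{\op} \dashv \Eta^{\op}$. Since $j^{\op}$ is carried to $i$ by the duality, the functor $\Eta^{\op}$ transports to a functor on (split) curved coalgebras that is right adjoint to $i$, as claimed.

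Concretely, the resulting right adjoint sends a (split) curved coalgebra $C$ to the (split) dg coalgebra obtained by dualizing $C$ to its curved pseudocompact algebra $C^*$, applying the pseudocompact uncurving $C^* \mapsto C^*\langle\langle\eta\rangle\rangle$, and dualizing back. Writing this composite again as $\Eta$ is consistent with the algebra notation of Definition \ref{defi:uncurvedalgebra}, and by construction it is right adjoint to the inclusion.

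The step requiring the most care is checking that the duality really carries $i$ to $j$ and respects the splittings, so that the transported adjunction is precisely the asserted one. This is a matter of unwinding the morphism conditions in Definition \ref{def:curvedmorphism} and verifying that dualization matches them with morphisms of (split) curved pseudocompact algebras; once this bookkeeping is done, no computation beyond Lemma \ref{lem:curved} and Proposition \ref{prop:uncurving} is required.
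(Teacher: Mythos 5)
Your proof is correct and is essentially the paper's own argument: the paper deduces the corollary as ``immediate by the duality between coalgebras and pseudocompact algebras,'' i.e., by dualizing the (split) pseudocompact version of Proposition \ref{prop:uncurving} exactly as you do. Spelling out that passing to opposite categories interchanges left and right adjoints, and that the duality carries the inclusion to the inclusion, is precisely the bookkeeping the paper leaves implicit.
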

\begin{rem}
	It follows from Proposition \ref{prop:uncurving} that for a curved algebra $A$ there is a canonical curved map $A\to \Eta A$ such that any curved map from $A$ into a dg algebra factors through it. This map has the form $(i_A,\eta)$ where $i_A$ is the natural inclusion $A\subset \Eta A\cong A\langle\eta\rangle$. 
\end{rem}
We see that curved maps $A\to B$ where the target $B$ is \emph{uncurved} are described solely in terms of uncurved maps. We will see that a similar description can be obtained for \emph{arbitrary} curved maps.

\begin{prop}\label{prop:uncurvingmaps}
	The set of curved maps  $(f,b):A\to B$ between curved algebras $A$ and $B$ is naturally identified with the subset of dg maps $f_b:\Eta A\to \Eta B$ such that $f_b(a)=f(a)$ for $a\in A$ and $f_b(\eta_A)=b+\eta_B$.
\end{prop}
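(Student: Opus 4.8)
The plan is to exhibit the assignment $(f,b)\mapsto f_b$ as a bijection between curved maps $A\to B$ and the indicated subset of dg maps $\Eta A\to \Eta B$, by verifying in turn that it is well-defined, injective, and surjective onto that subset. First I would observe that well-definedness is already supplied by Lemma \ref{lem:curved}: for a curved map $(f,b)$ the induced $f_b$ is a dg map, and by construction $f_b(a)=f(a)$ for $a\in A$ and $f_b(\eta_A)=b+\eta_B$, so $f_b$ indeed lies in the prescribed subset. Injectivity is then immediate, since $f_b$ determines both pieces of its data: restricting $f_b$ to the subalgebra $A\subset\Eta A$ recovers $f$ (viewed in $B\subset\Eta B$), and $f_b(\eta_A)-\eta_B=b$ recovers $b$.

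The substance of the argument lies in surjectivity. Since $\Eta A=A\langle\eta\rangle$ is obtained by freely adjoining $\eta$, any graded algebra map $g:\Eta A\to\Eta B$ is determined uniquely by its restriction to $A$ together with the element $g(\eta_A)\in(\Eta B)^1$. A map $g$ belonging to the prescribed subset is therefore precisely one whose restriction lands in $B$, giving a graded algebra map $f:A\to B$, and for which $g(\eta_A)=b+\eta_B$ for some $b\in B^1$; hence $g=f_b$ for this pair $(f,b)$, and it remains only to show that $(f,b)$ is a \emph{curved} map. For this I would reverse the computation of Lemma \ref{lem:curved}. The identity $d^{\Eta B}f_b(a)=f_b(d^{\Eta A}a)$ for $a\in A$ expands to $d_Bf(a)-[\eta_B,f(a)]=f(d_Aa)-[b,f(a)]-[\eta_B,f(a)]$, which after cancelling the common term $[\eta_B,f(a)]$ is exactly condition (1) of Definition \ref{def:curvedalg}; and the identity $d^{\Eta B}f_b(\eta_A)=f_b(d^{\Eta A}\eta_A)$, expanded using $(b+\eta_B)^2=b^2+[b,\eta_B]+\eta_B^2$ and $[\eta_B,b]=[b,\eta_B]$, collapses to condition (2).

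The point to watch is that these are genuine equivalences, not one-way implications: every step in the above expansions is reversible, so the dg condition on $f_b$ holds if and only if both curved-map conditions hold. This is what upgrades Lemma \ref{lem:curved} (which only gives the forward direction) to the full bijection. There is no serious obstacle beyond this bookkeeping; the one thing I would take care to confirm is that no degenerate case is silently excluded — in particular that the normalization $g(\eta_A)=b+\eta_B$ with $b\in B^1$ is imposed as part of the definition of the subset, so that the recovered $b$ genuinely lies in $B$ rather than picking up higher-order terms in $\eta_B$. Finally, as flagged earlier in this subsection, the same argument applies verbatim in the split pseudocompact case, with $A\langle\eta\rangle$ replaced by the power-series algebra $A\langle\langle\eta\rangle\rangle$.
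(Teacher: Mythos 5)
Your proof is correct and follows essentially the same route as the paper: the forward direction is Lemma \ref{lem:curved}, and the converse is obtained by observing that every step of that lemma's computation is reversible, so the dg condition on $f_b$ is equivalent to the two curved-map conditions. Your additional bookkeeping (injectivity via restriction to $A$ and evaluation at $\eta_A$, and the freeness of $A\langle\eta\rangle$ for surjectivity) just makes explicit what the paper leaves implicit in the phrase ``repeating the calculation in the opposite direction.''
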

\begin{proof}
	Given a curved map $(f,b):A\to B$ we saw that the map $f_b:\Eta A\to \Eta B$ is a dg map. Conversely, if $f_b$ is a dg map then repeating the calculation in the proof of Lemma \ref{lem:curved} in the opposite direction, we find that $(f,b)$ is a curved map $A\to B$ as required.
\end{proof}	

It is also possible to understand MC elements for curved algebras in terms of their uncurving.
Recall that for a curved algebra $(A,d,h)$, an element $a\in A^1$ is MC if it satisfies the equation $h+da+a^2=0$. 

\begin{prop}\label{prop:curvedMC}
	Let $A$ be a curved algebra. Then $a\in A$ is MC if and only if $a+\eta\in \Eta A$ is MC.
\end{prop}
\begin{proof}
	We have $$d^{\eta}(a+\eta)=da-[\eta,a]+h-\eta^2.$$ Similarly 
	$$(a+\eta)^2=a^2+[a,\eta]+\eta^2.$$ Taking into account that $[\eta,a]=[a,\eta]$, we conclude that 
	$d^{\eta}(a+\eta)+(a+\eta)^2=0$ if and only if $h+da+a^2=0$.
\end{proof}

\subsection{The bar construction for semialgebras}\label{sect:barsemialgebra}
We will now generalize the bar-cobar adjunction in order to extend it to dg categories.

The natural generality in which the Koszul adjunction holds is the category of split semialgebras on one side and that of split curved coalgebras on the other, see Definitions \ref{def:semialgebrasplit} and \ref{def:semicoalgebraworking}. We will thus first prove the adjunction in this case before specialising to dg categories and pointed curved coalgebras.

Note that when a `ground' cosemisimple coalgebra $R$ is fixed and finite-dimensional (and so gives rise by dualization to a semisimple finite-dimensional algebra), then this adjunction, in the augmented and noncurved case, was considered in \cite{vandenBergh15}. However, in order for it to be applicable to dg categories, it is essential to work over non-fixed, possibly infinite-dimensional, cosemisimple coalgebras.  

We would like to define a reduced bar construction following Section 6.1 of \cite{Positselski11}.
However, it is technically easier to begin with the non-reduced bar construction.

\begin{defi}\label{def:nrsemibar}
	Let $(A,R)$ be a semialgebra.
	Then the \emph{non-reduced bar construction} on $A$ is the split dg coalgebra
	$$\Ba_{nr}(A) = \Ba_{nr}(A, R) \coloneqq (T_{R} A[1], m_1 + m_2)$$
	with coradical $R$.
	Here $T_{R}$ is formed using the iterated cotensor product over $R$, i.e. 
	\[
	T_{R}V =  R \oplus V \oplus (V \boxempty_{R} V) \oplus (V \boxempty_{R} V \boxempty_{R}V) \oplus \dots,
	\] 
	the splitting is given by projection to the first factor and the differential is induced by multiplication and differential on $A$:
	$m_2$ is defined as $s b_1 \otimes s b_2 \mapsto (-1)^{|b_1|+1}(s b_1.b_2)$
	while $m_1$ is defined by the differential $s a \mapsto - s d_A a$ on $A$.
	
	The coproduct is given by deconcatenation. 
	The inclusion of $R$ is a coaugmentation.	
\end{defi}
The computation verifying that $m_1+m_2$ squares to zero is the same as in the non-relative case. 
The details are spelled out for example in Section A.1 of \cite{Anel13}.

	Dually we can consider $\Ba^*_{nr}(A):=\Hom_{\ground}(\Ba_{nr}(A),\ground)$ as a pseudocompact algebra in bimodules over the pseudocompact algebra $R^{*}$.
	This may be written as $\hat T_{R^{*}}  A^*[-1]$. 
	Here $\hat T$ is the completion of the tensor algebra in bimodules over $R^{*}$ so that  $$\Ba_{nr}^*(A)\cong R^*\times \prod_{n=1}^{\infty}( A^*[-1])^{\hat{\otimes}_{R^*} n}.$$

If now $(A,v)$ is split, with $v: A \to R$ a retract of the unit $u: R \to A$ (as $R$-bicomodules), then $v$ determines an isomorphism $\Ba_{nr}(A) \cong \Eta \Ba_v(A)$ with the uncurving of a certain split curved coalgebra.
This $\Ba_v(A)$ will be the non-reduced bar construction of $A$.

To explain this, we will perform this construction for the duals.
As a preparation, let $(A,u)$ be a split graded pseudocompact algebra; recall that $u:A_0\to A$ is a section of the quotient map from $A$ onto its maximal semisimple quotient.
Consider $A\langle\langle \eta\rangle\rangle$, the  graded pseudocompact algebra obtained
by freely adjoining a power series variable $\eta$ in degree 1. Then the splitting $u$ induces a splitting $R \to A\langle\langle \eta\rangle\rangle\to R$ that we denote, by an abuse of notation, by the same symbol $u$, making $(A\langle\langle \eta\rangle\rangle, u)$ a split pseudocompact algebra.

Suppose we are given a split dg pseudocompact algebra  whose underlying graded is of the form $(B,u) = (A\langle\langle \eta\rangle\rangle, u)$.
Following Definition \ref{defi:uncurvedalgebra} we define
\begin{align*}
d_A(a)&=d_Ba+[\eta, a]\\
h&=d\eta+\eta^2.
\end{align*} 

Let us assume that $h\in A^2$ and that $d_A$ is an endomorphism of $A$ of degree 1. 
Then we have the following result. 
It allows one to recognize dg algebras of the form $\Eta A$ for a curved algebra $A$.
\begin{lem}\label{lem:recognition}
	The endomorphism $d_A$ makes $A$ into a curved pseudocompact algebra with curvature $h$ and $B\cong \Eta A$.
\end{lem}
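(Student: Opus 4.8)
### Proof Plan

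The plan is to show that $d_A$ squares to the commutator with $h$ and that $d_A(h)=0$, which are precisely the two axioms of Definition~\ref{def:curvedalg} for $A$ to be a curved pseudocompact algebra; the isomorphism $B\cong\Eta A$ will then follow essentially by construction once we check that the differential $d_B$ recovers the formulas of Definition~\ref{defi:uncurvedalgebra}. The central tool is Remark~\ref{rem:curvedifuncurved}, which already tells us that the triple $(A,d_A,h)$ is a curved algebra \emph{if and only if} the square-zero condition holds on the algebra obtained by adjoining $\eta$ with the prescribed differential. Since we are \emph{given} that $B=A\langle\langle\eta\rangle\rangle$ is a dg pseudocompact algebra, we already know $d_B^2=0$, so the strategy is to leverage this hypothesis rather than verify the curved-algebra axioms from scratch.

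First I would unwind the definitions. By hypothesis $d_B$ is a degree-$1$ derivation on $A\langle\langle\eta\rangle\rangle$ with $d_B^2=0$, and $d_A$, $h$ are defined by $d_A(a)=d_Ba+[\eta,a]$ and $h=d_B\eta+\eta^2$. I would observe that these are exactly the equations obtained by inverting the formulas of Definition~\ref{defi:uncurvedalgebra}: writing $d_B a = d_A a - [\eta,a]$ and $d_B\eta = h-\eta^2$ shows that $d_B$ agrees with the differential $d^{\Eta}$ determined by the data $(A,d_A,h)$. Thus, \emph{as graded algebras with differential}, $B$ and $\Eta A$ coincide once we know $(A,d_A,h)$ is a legitimate curved algebra; the content of the lemma is precisely that legitimacy.

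Next I would apply Remark~\ref{rem:curvedifuncurved} in the direction that matters here: the remark asserts that $(A,d_A,h)$ is a curved algebra if and only if $d^{\Eta}$ squares to zero on $A\langle\eta\rangle$ (equivalently $A\langle\langle\eta\rangle\rangle$ in the pseudocompact setting). Since the differential of the given dg algebra $B$ is exactly this $d^{\Eta}$, and $d_B^2=0$ by the hypothesis that $B$ is a dg algebra, the remark immediately yields that $(A,d_A,h)$ is a curved pseudocompact algebra with curvature $h$. The hypotheses that $h\in A^2$ and that $d_A$ is a degree-$1$ endomorphism of $A$ (rather than merely of $B$) are exactly what is needed for $(A,d_A,h)$ to be curved data in the first place, so these assumptions are not redundant but guarantee that the pieces live where they should.

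The main obstacle, and the step deserving genuine care, is verifying that Remark~\ref{rem:curvedifuncurved} transfers faithfully to the pseudocompact and \emph{split} setting: one must confirm that adjoining a power-series generator $\eta$ (rather than a polynomial generator) does not disturb the equivalence, and that the splitting $u$ is compatible throughout. Concretely, I would check that the curvature $h=d_B\eta+\eta^2$ indeed lands in $A^2\subset B$ (given by hypothesis), that $d_A$ restricts to an endomorphism of $A$, and that the completed tensor structure of $A\langle\langle\eta\rangle\rangle$ does not interfere with the derivation identities—these are the degree-$1$ and degree-$2$ components of the equation $d_B^2=0$, which decompose cleanly into the two curved-algebra axioms. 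Once this bookkeeping is done, the identification $B\cong\Eta A$ is tautological, since both carry the same underlying pseudocompact algebra $A\langle\langle\eta\rangle\rangle$, the same splitting induced from $u$, and the same differential.
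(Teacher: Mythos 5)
Your proof is correct, and it takes a route that is recognizably the mirror image of the paper's rather than a copy of it. The paper twists in the opposite direction: it forms the twisted algebra $B^\eta=(B,\,d_B+[\eta,-])$, observes that twisting a dg algebra by a degree-one element always yields a curved algebra with curvature element $d_B\eta+\eta^2=h$, notes that the twisted differential restricts to $d_A$ on $A$, and concludes that the hypotheses say precisely that $A\subset B^\eta$ is a curved subalgebra — so the curved axioms for $(A,d_A,h)$ come for free, with no computation. You instead run the uncurving construction on the candidate data $(A,d_A,h)$, check that $d^\Eta$ agrees with $d_B$ on $A$ and on $\eta$ (hence everywhere), and invoke Remark \ref{rem:curvedifuncurved} together with $d_B^2=0$. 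The underlying computation is the same in both cases and is in both cases deferred to a fact the paper states without proof (the twisting observation after Definition \ref{defi:uncurvedalgebra} for the paper, Remark \ref{rem:curvedifuncurved} for you); your version has the small merit of making the identification $B\cong\Eta A$ explicit, which the paper leaves implicit in "completing the proof." Two minor points to tighten: Remark \ref{rem:curvedifuncurved} requires $d_A$ to be a \emph{derivation}, not merely a degree-one endomorphism — this is automatic, since $d_A$ is the restriction to $A$ of the derivation $d_B+[\eta,-]$, but it deserves a sentence; and your concern about transferring the remark to the split pseudocompact setting is legitimate but already discharged by the paper's blanket declaration that all results of that subsection hold with the same proofs in the split pseudocompact case.
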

\begin{proof}
	Consider the twisted algebra $B^\eta = (B, d^\eta)$ with differential $d^\eta = d+ [\eta, -]$.
	This is a curved algebra with the curvature element $h = d\eta + \eta^2$.
	The twisted differential restricts to $d_A$ on $A$.
	By assumption $A$ is a curved subalgebra, completing the proof.	
\end{proof}	

\begin{prop}\label{prop:reducedunreduced}
	Let $(A,R)$ be a split  semialgebra, choose a splitting $v: A\to R$ giving a decomposition 
	$A\cong \bar{A}\oplus R$. Then there is a structure of a curved pseudocompact algebra on $\hat T_{R^*}\bar A^*[-1]$ such that  $\Eta\hat T_{R^*}\bar A^*[-1]\cong \Ba^*_{nr}(A,R)$.  
\end{prop}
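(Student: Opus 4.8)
The plan is to apply the recognition Lemma~\ref{lem:recognition} with $B=\Ba^*_{nr}(A,R)$. First I would record that $B$ is an honest dg pseudocompact algebra and not merely a curved one: by Definition~\ref{def:nrsemibar} its differential $D$ is dual to $m_1+m_2$, and the computation cited there (as in \cite{Positselski11}) shows $(m_1+m_2)^2=0$, whence $D^2=0$. Thus $B$ is a legitimate candidate for the output of the uncurving functor, and the task is to exhibit the curved pseudocompact algebra $\mathcal{A}:=\hat{T}_{R^*}\bar{A}^*[-1]$ whose uncurving it is.

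Next I would produce the required isomorphism of underlying graded algebras $B\cong \mathcal{A}\langle\langle\eta\rangle\rangle$. Since $v$ is an $R$-bicomodule retract of the unit, its dual is an $R^*$-bimodule splitting, giving a decomposition of $R^*$-bimodules $A^*[-1]\cong \bar{A}^*[-1]\oplus R^*[-1]$; here the cosemisimplicity of $R$ (hence exactness of $\hat{\otimes}_{R^*}$) guarantees that this decomposition is respected termwise by the completed tensor powers. Combined with the description
\[
\Ba_{nr}^*(A)\cong R^*\times\prod_{n\ge 1}(A^*[-1])^{\hat{\otimes}_{R^*}n},
\]
this identifies $B$ with $\hat{T}_{R^*}(\bar{A}^*[-1]\oplus R^*[-1])$. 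The summand $R^*[-1]$ is the regular $R^*$-bimodule placed in degree $1$; its distinguished generator $\eta$ (dual to the shifted unit $s\,u(1_R)$) commutes with $R^*$, so adjoining it to $\mathcal{A}$ as a power-series variable that is central over $R^*$ but free against $\bar{A}^*[-1]$ yields exactly $\mathcal{A}\langle\langle\eta\rangle\rangle$, in the sense of the split pseudocompact uncurving of Definition~\ref{defi:uncurvedalgebra}. The splitting $u\colon R^*\to B$ factors through $\mathcal{A}$ as required.

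With this identification I would invoke Lemma~\ref{lem:recognition}: set $d_{\mathcal{A}}(a)=Da+[\eta,a]$ for $a\in\mathcal{A}$ and $h=D\eta+\eta^2$, and verify the two hypotheses, namely $h\in\mathcal{A}^2$ and $d_{\mathcal{A}}(\mathcal{A})\subseteq\mathcal{A}$. Both follow by unwinding $D=D_1+D_2$, the derivations dual to $m_1$ (the internal differential $d_A$) and $m_2$ (the multiplication $\mu\colon A\boxempty_R A\to A$). For the curvature, the only $\eta$-bearing contribution to $D\eta$ is the dual of $\mu(1\otimes 1)=1$, which supplies precisely a term $-\eta^2$; it cancels the explicit $\eta^2$, and the remaining contributions, dual to the unit component of a product of two \emph{reduced} elements and to $d_A$ hitting the unit direction, lie in $\bar{A}^*[-1]^{\hat{\otimes}_{R^*}2}\oplus\bar{A}^*[-1]\subseteq\mathcal{A}^2$. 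For the derivation property, the unit axioms $\mu(1\otimes a)=a=\mu(a\otimes 1)$ force the $\eta$-linear part of $Da$ to equal $-[\eta,a]$ up to a reduced ($\eta$-free) term, so $d_{\mathcal{A}}a=Da+[\eta,a]$ has no $\eta$'s. Lemma~\ref{lem:recognition} then makes $\mathcal{A}=\hat{T}_{R^*}\bar{A}^*[-1]$ a curved pseudocompact algebra with curvature $h$ and gives $\Eta\mathcal{A}\cong B=\Ba_{nr}^*(A,R)$.

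The hard part is the verification in the previous paragraph: one must track exactly how the dualized multiplication $D_2$ distributes occurrences of $\eta$ across a word of $\mathcal{A}$ and confirm that the commutator correction $[\eta,-]$ absorbs precisely the terms in which the unit is multiplied in or split off. This bookkeeping is where the unit axiom of the semialgebra does the essential work, and where the signs of Definition~\ref{def:nrsemibar} must be matched against those in the non-relative computation of \cite{Positselski11}; the cosemisimplicity of $R$ is what allows the argument to be carried out summand by summand rather than only after completion.
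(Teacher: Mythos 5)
Your proposal is correct and follows essentially the same route as the paper's own proof: identify the underlying graded pseudocompact algebra $\Ba^*_{nr}(A,R)\cong \hat T_{R^*}\bar A^*[-1]\langle\langle\eta\rangle\rangle$ via the dual of the splitting $v$, observe that the bar differential acts on generators as a reduced term plus exactly $-[\eta,-]$ (and on $\eta$ as a reduced term minus $\eta^2$), and then invoke Lemma~\ref{lem:recognition}. The only differences are cosmetic --- you take $\eta$ dual to $+s\,u(1_R)$ where the paper takes it dual to $-1$, and you spell out the unit-axiom bookkeeping that the paper dismisses with ``it is easy to see.''
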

\begin{proof}
	Denote the element in $A^*[-1]\cong \bar A^*[-1]\oplus\ground^*[-1]$ dual to $-1\in A$ by $\eta$; thus there is an isomorphism of graded  pseudocompact algebras 
	\[
	\Ba^*_{nr}(A,R)\cong \hat T_{R^*}\bar A^*[-1]\cong  \hat T_{R^*}\bar A^*[-1] \langle \langle \eta\rangle \rangle.
	\] 
	The differential in $\Ba^*_{nr}(A,R)$ is determined by its values on $\bar A^*[1]$ and on $\eta$ and it is easy to see that it has the following form:
	\begin{align*}
	d_{\Ba_{nr}^*A}(t)&=Y(t)-[\eta,t]\\
	d_{\Ba_{nr}^*A}(\eta)&=X-\eta^2
	\end{align*}
	where $t\in\bar{A}^*[-1]$ and  $Y(t),X\in\hat{T}_{R^*}\bar{A}^*[-1]$ (more precisely, $Y(t)$ and $X$ are sums of quadratic monomials in $\bar{A}^*[1]$ but this  is unimportant for the argument). Note that a similar argument was employed in \cite[Proposition 3.8]{Lazarev03} in the context of $A_\infty$ algebras.
	
	 In particular $d_{\Ba_{nr}^*(A)} + [\eta, -]$ preserves $T_{R^*}\bar{A}^*[-1]$ and applying Lemma \ref{lem:recognition} 
	 we see that the twisted differential $d_{\Ba_{nr}}+[\eta,-]$ makes 
	 	$\hat T_{R^*}\bar A^*[-1]$ into a 
	 	curved algebra with curvature element $X$. It is also clearly split.
\end{proof}
 
\begin{defi}\label{def:semibar}
	Given a split semialgebra $(A,R)$ with $v: A \to R$ a retract of the unit map $u: R \to A$ (as $R$-bicomodules)
	we let $\bar A = A/R \cong \ker(v)$. 
	Then the \emph{dual bar construction} on $(A,R)$ is the split curved pseudocompact algebra structure on $T_{R}\bar A^*[-1]$ constructed in Proposition \ref{prop:reducedunreduced}. 
	The splitting is given by the natural inclusion $R^*\to T_{R^*}A^*[-1] $. 
	The dual bar construction will be denoted by $\Ba^*_v(A, R)$ or $\Ba^*_v(A)$.
	
	The dual split curved coalgebra structure on $T_{R}\bar A[1]$ is called the \emph{bar-construction} of $(A,R)$. 
	The splitting is given by the natural projection $T_{R}A[1] \to R$. The bar construction will be denoted by $\Ba_v(A, R)$ or $\Ba_v(A)$.
\end{defi}
\begin{rem}
	One can of course define differential $m_1 + m_2$ and curvature $h_1 + h_2$ of the reduced bar construction directly.
	
	We write $A = \bar A \oplus R$ and decompose differential and product as $d_{A}=d_{\bar A} + d_{R}$ and $\mu_A = \mu_{\bar A} + \mu_R$.
	
	The differential $m_2$ is defined on $\bar A \otimes \bar A$ by $s b_1 \otimes s b_2 \mapsto (-1)^{|b_1|+1}s \mu_{\bar A}(b_1 \otimes b_2)$
	while $m_1$ is defined by the differential $s a \mapsto - s d_{\bar A}a$ on $\bar A$.
	
	The curvature $h: T_RV \to R$ is induced by $\mu_{R}$ and $d_{R}$ via $h_2(s b_1 \otimes s b_2) = (-1)^{|b_1|+1}\mu_R(b_1 \otimes b_2)$ and $h_1(s b) = -d_R(b)$. 
	
	To compare this expression with the above computation note that by 
	Proposition \ref{prop:reducedunreduced} the curvature is determined by the differential of the variable $\eta$  in the non-reduced bar construction. Since $\eta$ is dual to $-1$ this differential is determined by the image of multiplication and internal differential in the ground ring, i.e.\ $\mu_R$ and $d_R$.
	
	The patient reader may check by hand that this defines a curved coalgebra, invariant under choice of $v$ and adjoint to the cobar construction to be constructed below. The point of our more indirect definition is to avoid these tedious computations in favour of the friendlier ones in Proposition \ref{prop:reducedunreduced}.
	
	An example of coalgebraic computations of the curved bar construction in the context of operads is given in Section 3.3 of of \cite{Hirsh12}.
\end{rem}

Another computation simplified by the relation $\Ba_{nr} = \Eta \Ba_v$ is the following.

\begin{lem}
	The bar construction is functorial.
\end{lem}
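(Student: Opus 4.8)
The plan is to deduce functoriality of the reduced bar construction $\Ba_v$ of Definition~\ref{def:semibar} from that of the non-reduced bar construction $\Ba_{nr}$, exploiting the relation $\Ba_{nr} = \Eta \Ba_v$ together with the identification of curved morphisms via uncurving in Proposition~\ref{prop:uncurvingmaps}. The gain is that $\Ba_{nr}$ is an honest dg coalgebra, so a morphism between two copies of $\Ba_{nr}$ carries neither curvature nor twist data and can be written down directly; a morphism of the curved coalgebras $\Ba_v$ would instead force us to track the degree-$1$ twist term and its signs.

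First I would define $\Ba_{nr}$ on morphisms. A morphism of split semialgebras $(A,R) \to (B,S)$ consists of a coalgebra map $f\colon R \to S$ and an $S$-bicomodule map $g\colon A \to B$ compatible with the multiplications, i.e.\ making the square of Definition~\ref{def:semialgebra} commute. The map $f$ induces natural inclusions $A[1]^{\boxempty_R n} \hookrightarrow A[1]^{\boxempty_S n}$, where on the right $A$ is regarded as an $S$-bicomodule via $f$ (an $R$-equalizer inside $A^{\otimes n}$ is automatically an $S$-equalizer). Composing with $g^{\boxempty_S n}$ gives maps $A[1]^{\boxempty_R n} \to B[1]^{\boxempty_S n}$ that assemble into $\Ba_{nr}(f,g)\colon T_R A[1] \to T_S B[1]$ covering $f$ on coradicals. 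Being levelwise, it respects deconcatenation, hence is a map of graded coalgebras; compatibility with the bar differential $m_1 + m_2$ splits into the $m_1$-part, which follows from $g$ commuting with the differentials on $A$ and $B$, and the $m_2$-part, which is exactly the multiplicativity encoded in the defining square. Respect for composition and identities is immediate, so $\Ba_{nr}$ is a functor into dg coalgebras.

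Next I would transfer this to $\Ba_v$, passing to the dual pseudocompact algebras where $\Ba^*_{nr} = \Eta \Ba^*_v$. By Proposition~\ref{prop:uncurvingmaps}, curved maps between the $\Ba^*_v$ are identified with precisely those dg maps between the $\Ba^*_{nr}$ that fix the shape of the distinguished generator $\eta$ dual to $-1 \in A$ (as in Proposition~\ref{prop:reducedunreduced}) and restrict appropriately to $\bar A^*[-1]$. I would check that the dual of $\Ba_{nr}(f,g)$ is of this special form, so that it corresponds to a unique curved morphism; dualizing back yields $\Ba_v(f,g)$. Since the correspondence of Proposition~\ref{prop:uncurvingmaps} is compatible with composition (the functoriality of $\Eta$, Lemma~\ref{lem:curved}), functoriality of $\Ba_v$ follows at once from that of $\Ba_{nr}$.

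The main obstacle is the bookkeeping forced by the change of ground coalgebra: one must confirm that the canonical maps $A^{\boxempty_R n} \to A^{\boxempty_S n}$ are well defined, natural, and simultaneously compatible with deconcatenation and the bar differential, even though source and target live over the different cosemisimple coalgebras $R$ and $S$. The secondary point is verifying that $\Ba_{nr}(f,g)$ genuinely has the $\eta$-respecting shape needed to descend along the uncurving, which amounts to tracking that single distinguished generator through the morphism.
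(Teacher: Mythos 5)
Your proposal is correct, but it is organized differently from the paper's proof. The paper splits functoriality into two cases: for morphisms that intertwine the chosen retracts the induced map of reduced bar constructions is taken as clear, and the real content is the change-of-retract isomorphism $\Ba_v(A)\cong\Ba_w(A)$, exhibited as $(\id_{\Ba^*(A)},v^*-w^*)$ and verified via Proposition \ref{prop:uncurvingmaps} by noting that the corresponding dg map of non-reduced bar constructions is literally the identity. You instead treat an arbitrary morphism $(f,g)\colon(A,R)\to(B,S)$ in one stroke: $\Ba_{nr}$ is functorial with no reference to retracts, and one descends along $\Ba^*_{nr}\cong\Eta\Ba^*_v$ using the recognition criterion of Proposition \ref{prop:uncurvingmaps}. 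The checks you defer do go through: unitality of $g$ (that is, $g\circ u_A=u_B\circ f$) gives both the required restriction $g^*(\bar B^*)\subset\bar A^*$ and the fact that the twist, which computes to $(f\circ v-w\circ g)^*$, kills the unit and hence lies in $\Ba^*_v(A)$; the only caveat is that one must invoke the split, two-coradical version of Proposition \ref{prop:uncurvingmaps}, which the paper states in the absolute form but asserts holds with the same proof. As for what each route buys: the paper's argument is shorter and isolates independence of the retract as the single issue, but a general morphism of split semialgebras need not intertwine any pair of retracts (consider a dg functor carrying a morphism between distinct objects, which every retract of the source kills, to an identity of the target, which no retract of the target kills), so the paper's two cases only cover all morphisms after a factorization argument that is left implicit. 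Your construction handles every morphism uniformly, at the cost of writing out the functoriality of $\Ba_{nr}$ and the generator-tracking computation, and it recovers both of the paper's cases as specializations: for retract-compatible morphisms the twist $(fv-wg)^*$ vanishes, and for the identity morphism equipped with two different retracts it becomes $v^*-w^*$, which is exactly the paper's isomorphism.
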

\begin{proof}
	For morphisms compatible with the retract this is clear. It remains to compare the bar constructions for two different retracts $v, w: A \to R$. 
	The underlying graded pseudocompact algebras are easily identified and we claim that
	$(\id_{\Ba^*(A)}, v^* - w^*)$ is an isomorphism of curved pseudocompact algebras $\Ba^*_{v}(A) \cong \Ba^*_{w}(A)$.
	
	From Definition \ref{def:semibar} it is clear that $\Eta \Ba^*_v(A) \cong \Eta \Ba^*_w(A)$ are isomorphic, the only difference is the indeterminate, which is $v^*$ respectively $w^*$.
	We apply Proposition \ref{prop:uncurvingmaps}: $(\id, v^*-w^*)$ is a curved algebra map if $\id_{v^*-w^*}$ is a dg map.
	But $\id_{v^*-w^*}$ acts as $v^* \mapsto w^* + (v^* - w^*)$ i.e. is the identity map.
\end{proof}
As the isomorphism $\Ba_v(A) \cong \Ba_w(A)$ is canonical, we can drop the subscript $v$ from our notation.

\subsection{The cobar construction of a split curved coalgebra}\label{sect:barcoalgebra}
\begin{defi}\label{def:semicobar}
Given a split curved coalgebra $(C,\epsilon)$ we define its (reduced) cobar construction as follows. 
Let $\bar C$ be the cokernel of the  inclusion $C_0 \to C$. 
The splitting $C = \bar C \oplus C_0$ allows us to decompose coproduct, differential and curvature. We write $\Delta_{\bar C}:  \bar C  \to \bar C \otimes \bar C$ and $d_{\bar C}: \bar C \to \bar C$ and $h_{\bar C}: \bar C \to C_0$ for the induced maps.

Then the \emph{cobar construction} on $C$ is the semialgebra
$$\Omega(C) \coloneqq \Omega(C,\epsilon) \coloneqq (T_{C_0}\bar C[-1], m_0 + m_1 + m_2)$$ 
where $T_{C_0}$ is again the iterated cotensor product over $C_0$,
The differential  is defined on $\bar C$ by $s^{-1} c \mapsto m_0(s^{-1}c) + m_1(s^{-1}c) + m_2(s^{-1}c)$ with $m_0(s^{-1}c) = + s^{-1} h_{\bar C}$, $m_1(s^{-1}c) =  -s^{-1} d_{\bar C} (c)$ and $m_2(s^{-1}c) = - (-1)^{|c^{(1)}|} s^{-1} c^{(1)} \otimes c^{(2)}$. 
Here the Sweedler notation refers to the comultiplication $\Delta_{\bar C}$.

The product is given by concatenation. $\Omega(A)$ is split via the natural map $T_{C_0} \bar C[-1] \to C_0$.

For a split curved pseudocompact algebra $(A,u)$ we will denote by $\Omega (A,u)$ or $\Omega(A)$ the cobar construction on the dual split curved coalgebra $(A^*, u^*)$.
\end{defi}
As $C_0 \subset C$ is a coaugmentation there is no curvature term (we check below that the differential does indeed square to 0).

We note that the cobar construction  makes sense over the zero coalgebra.
In this case $C$, $C_0$ and $\bar C = C/C_0$ are all equal to the zero coalgebra. 
Then the tensor algebra is 0 in every degree (including $T^0 = C_0$) and we have $\Omega(C) = 0$, the zero semialgebra over the zero coalgebra, corresponding to the empty dg category.

We need to check that $\Omega(C)$ is in fact differential graded.

\begin{lem}\label{lem:cobarsquarezero}
	We have $(m_0 + m_1 +  m_2)^2 = 0$.
\end{lem}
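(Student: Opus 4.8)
The plan is to verify $(m_0+m_1+m_2)^2=0$ by exploiting the uncurving relation $\Ba_{nr}=\Eta\Ba_v$ rather than grinding through all the sign-laden cross terms by hand. The cleanest route is to dualize: the cobar construction $\Omega(C)$ is, by Definition \ref{def:semicobar}, built from the dual split curved pseudocompact algebra $(C^*,\epsilon^*)$, so it suffices to check that the dual differential squares to zero on $\hat T_{C_0^*}\bar C^*[1]$. The three pieces $m_0,m_1,m_2$ dualize to the curvature $h$, the internal differential $d$, and the deconcatenation/coproduct $\Delta$ respectively, and what we must show is exactly that these assemble into a genuine (uncurved) differential.

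First I would observe that by the recognition Lemma \ref{lem:recognition}, a graded pseudocompact algebra of the form $A\langle\langle\eta\rangle\rangle$ equipped with a degree-1 endomorphism is the uncurving $\Eta A$ of a curved algebra precisely when the induced $h=d\eta+\eta^2$ lands in $A^2$ and $d_A=d+[\eta,-]$ restricts to $A$. The content of Remark \ref{rem:curvedifuncurved} is the key leverage: $(A,d,h)$ forms a curved algebra (equivalently $d$ restricted to the twisted object squares to the commutator with $h$) if and only if $\Eta A$ is an honest dg algebra, i.e.\ $d^\Eta$ squares to zero. So instead of checking $(m_0+m_1+m_2)^2=0$ directly, I would check that the free cobar object carries a curved pseudocompact algebra structure and then invoke that its uncurving is the unreduced bar-type object whose differential-squared identity is already known.

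Concretely, the second step is to package the data: the coproduct $\Delta_{\bar C}$, differential $d_{\bar C}$, and curvature $h_{\bar C}:\bar C\to C_0$ coming from the split curved coalgebra $(C,\epsilon)$ dualize to a multiplication, differential, and curvature element $h^*$ on $\hat T_{C_0^*}\bar C^*[1]$. The coassociativity and counit axioms for $C$, the compatibility of $d$ with the comultiplication, and the defining curved-coalgebra identity $d^2=[h,-]$ (via Definition \ref{def:curvedcoa}, stated on the dual $(C^*,d^*,h^*)$) are exactly the hypotheses needed for $(\hat T_{C_0^*}\bar C^*[1],\mu,d,h^*)$ to be a curved pseudocompact algebra. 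Then the uncurving $\Eta$ of this curved algebra is, by construction, the unreduced bar-type object $\Ba_{nr}$, whose differential squares to zero by the computation already cited in Definition \ref{def:nrsemibar} (spelled out in Section A.1 of \cite{Anel13}). Applying Remark \ref{rem:curvedifuncurved} in reverse, the fact that $\Eta$ of our object is dg forces the curved structure to be genuine, and dualizing back yields $(m_0+m_1+m_2)^2=0$ on $\Omega(C)$.

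The main obstacle is bookkeeping the degree shifts and signs correctly when transferring between $C$ and $C^*$ and between the $[1]$ and $[-1]$ conventions, together with checking that the curvature $h_{\bar C}$ really reproduces the $m_0$ term with the stated sign $+s^{-1}h_{\bar C}$ and that no spurious curvature survives on $\Omega(C)$ itself. The key conceptual point making this painless is that the coaugmentation $C_0\hookrightarrow C$ kills the curvature after uncurving, so $\Omega(C)$ is genuinely uncurved even though its dual-bar cousin is curved; the vanishing of $d$ on the coradical (second bullet of Definition \ref{def:semicoalgebraworking}) is what guarantees the curvature is absorbed. I would therefore state the computation as an application of Lemma \ref{lem:recognition} and Remark \ref{rem:curvedifuncurved}, reducing the verification to the already-established unreduced case and leaving only the routine sign-matching to the reader.
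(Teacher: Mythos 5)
Your proposal has a genuine gap: it conflates the bar and cobar constructions, and as a result the curvature identities --- which are the actual content of this lemma --- are never verified. The known square-zero computation you invoke (Definition \ref{def:nrsemibar}, Section A.1 of \cite{Anel13}) concerns the non-reduced \emph{bar} construction of an \emph{uncurved} dg semialgebra: its differential is $m_1+m_2$ with no curvature term, and the check uses only associativity, the Leibniz rule and $d_A^2=0$. Likewise, the relation $\Ba_{nr}\cong\Eta\Ba_v$ of Proposition \ref{prop:reducedunreduced} lives entirely on the bar side, where the input is dg and the curvature appears in the \emph{output} of the reduced construction. The cobar construction is not a mirror image of this: here the input $C$ is curved, and its curvature enters as the summand $m_0$ of the differential of the output, which is supposed to be an honest (uncurved) dg algebra. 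Consequently the cited computation cannot ``already know'' the identities $d(h)=0$ and $d^2x=[h,x]$; these are precisely the constant and quadratic components $[\check m_0,\check m_1]=0$ and $\check m_1^2+[\check m_0,\check m_2]=0$ of $(m_0+m_1+m_2)^2=0$, and they are exactly what the paper checks by an explicit dualized computation. Your proposal dismisses them as ``routine sign-matching,'' but they are the heart of the matter and nothing you cite supplies them.

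The reduction itself is also incoherent as stated. If the dual of $\Omega(C)$ carried a curved pseudocompact algebra structure with curvature $h^*\neq 0$, its differential would square to $[h^*,-]\neq 0$, contradicting the very statement being proven; the curved object you describe is (a completed version of) the dual bar construction $\Ba^*_v$ of a dg semialgebra, not the dual of the cobar construction of a curved coalgebra. If instead one applies Remark \ref{rem:curvedifuncurved} in the only consistent way, with $h=0$ --- namely, $(m_0+m_1+m_2)^2=0$ on $\Omega(C)$ if and only if the ``unreduced cobar'' $T_{C_0}(C[-1])$, whose differential is built from the full coproduct, the full differential \emph{and still the curvature term} $m_0$, is dg --- then the unreduced statement is exactly as hard as the reduced one: its square produces the same curvature cross-terms, which are not covered by any previously established result. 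So the uncurving detour either proves nothing or presupposes its conclusion. The paper's proof instead decomposes $(m_0+m_1+m_2)^2$ by tensor degree, cites \cite{Anel13} only for the two curvature-free components ($m_2^2=0$ is coassociativity, $[m_1,m_2]=0$ is the coderivation property), and then verifies the two curvature-involving components directly on the dual; some version of that direct verification is unavoidable.
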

\begin{proof}
	We check that the constant, linear, quadratic and cubic term of $(m_0 + m_1 +  m_2)^2$ all equal 0.
	The equation for the cubic term $m_2^2 = 0$ is coassociativity of $C$ and the quadratic equation $[m_1,m_2] = 0$ expresses the fact that $d_C$ is a coderivation.
	These equations do not involve the curvature. Detailed derivations can be found in Section A.2 of \cite{Anel13}. 
	
	To consider the equations involving curvature, we denote by $(A, d, h)$ the pseudocompact dual of $C$.
	We write $\check m_i$ for the dual of $m_i$.
		
	Then constant term $[\check m_0, \check m_1] = 0$ is equivalent to $d(h) = 0$ and the quadratic term $\check m_1^2 + [\check m_0, \check m_2] = 0$ is equivalent to the condition $d^2 x = [h,x]$.
	
	We spell out the check for the latter assertion.
	Fix $x \in A$ and evaluate $[m_0, m_2]$ on $s^{-1} x \in  A[-1]$.
	\begin{align*}
	[m_0, m_2](s^{-1}x) & = 0 + m_2(s^{-1}h \otimes s^{-1}x + (-1)^{|s^{-1}x|} s^{-1}x \otimes s^{-1}h) \\
	& = -(-1)^{|h|} s^{-1}(hx) - (-1)^{|x| + |s^{-1}x|} s^{-1}(xh) \\
	& = -s^{-1}(hx - xh) = -s^{-1}[h,x] 
	\end{align*}
	
	As it is clear that $\check m_1^2 (s^{-1}x) = s^{-1}d^2(x)$ this completes the proof. 
\end{proof}

\begin{lem}\label{lem:cobarfunctor}
	The reduced cobar construction is a functor $\cusemicoa \to \semialg$.
\end{lem}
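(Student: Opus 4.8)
The object level is already in hand: Definition \ref{def:semicobar} produces a graded $C_0$-bicomodule monoid $\Omega(C) = (T_{C_0}\bar C[-1],\, m_0 + m_1 + m_2)$ with $C_0$ cosemisimple and a splitting $T_{C_0}\bar C[-1] \to C_0$, and Lemma \ref{lem:cobarsquarezero} shows the differential squares to zero, so $\Omega(C)$ is a genuine split semialgebra. Hence the content of the lemma is the action of $\Omega$ on morphisms together with compatibility with identities and composition. Given a morphism $(f,a):(C,\epsilon)\to(D,\delta)$ of split curved coalgebras (Definition \ref{def:curvedmorphism}), the plan is to define $\Omega(f,a)$ as the unique morphism of semialgebras covering the coalgebra map $f_0 := f|_{C_0}: C_0 \to D_0$ (well defined since $\delta\circ f = f\circ\epsilon$ forces $f(C_0)\subseteq D_0$) which is determined on the generating bicomodule $\bar C[-1]$ by
\[
s^{-1}c \longmapsto s^{-1}\bar f(c) + \bar a(c),
\]
where $\bar f:\bar C\to\bar D$ and $\bar a:\bar C\to D_0$ are induced by $f$ and $a$, and where $\bar a(c)$ is read as an element of $T^0 = D_0$. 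Since $a$ has degree $1$ and $s^{-1}$ raises degree by one, both summands lie in the same degree, so this is a degree $0$ map; because $f$ and $a$ are $D_0$-bicomodule maps it is $D_0$-bicomodule linear, and extending it multiplicatively over concatenation gives a morphism of semialgebras in the sense of Definition \ref{def:semialgebra}. (The zero coalgebra maps to the zero semialgebra, consistent with the conventions of Definition \ref{def:semicobar}.)

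The main step is to check that $\Omega(f,a)$ commutes with the cobar differential. As $\Omega(f,a)$ is a monoid map and $m_0 + m_1 + m_2$ is a derivation for concatenation, it suffices to verify this on $C_0$ and on the generators $s^{-1}c$. On $C_0$ the differential vanishes and $f_0$ is a coalgebra map, so nothing is required. On a generator, comparing $\Omega(f,a)\bigl(m_i(s^{-1}c)\bigr)$ with $(m_0 + m_1 + m_2)\bigl(s^{-1}\bar f(c) + \bar a(c)\bigr)$ term by term, the $m_1$-matching reproduces the differential-compatibility condition, the $m_2$-matching reproduces the condition that $f$ respects coproducts, and the $m_0$-matching reproduces the curvature condition, with the datum $\bar a$ in each case supplying exactly the correction terms. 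I would carry this verification out in the dual pseudocompact picture, where $\Omega(C)^*$ is a completed tensor algebra and $(f,a)$ dualizes to a curved-algebra morphism $(g,b)$ in the sense of Definition \ref{def:curvedalg}; the three matchings then become precisely conditions (1) and (2) of that definition, and Proposition \ref{prop:uncurvingmaps} lets one replace the curved map by the honest dg map $g_b$ of uncurvings to keep the signs manageable. This sign bookkeeping is the only genuine obstacle; everything else is formal.

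Finally I would record functoriality. The identity $(\id_C,0)$ is sent to $\id_{\Omega(C)}$ directly from the formula. For composition, given $(f,a):C\to D$ and $(g,b):D\to E$ with composite $(g\circ f,\, b\circ f + g\circ a)$ (Definition \ref{def:curvedmorphism}), evaluating $\Omega(g,b)\circ\Omega(f,a)$ on a generator yields
\[
s^{-1}c \longmapsto s^{-1}\overline{gf}(c) + b(f(c)) + g_0(\bar a(c)),
\]
the last summand arising because $\bar a(c)\in D_0$ and $\Omega(g,b)$ restricts to $g_0$ there; this is exactly $\Omega\bigl(g\circ f,\, b\circ f + g\circ a\bigr)(s^{-1}c)$. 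Since both maps are determined by their values on the generators and the coradical, they coincide, which establishes that $\Omega$ is a functor $\cusemicoa \to \semialg$.
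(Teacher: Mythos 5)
Your proposal is correct, and at its core it performs the same computation as the paper: your formula $s^{-1}c \mapsto s^{-1}\bar f(c) + \bar a(c)$ is exactly the dual of the paper's assignment $v^* \mapsto f^*(v^*) + v(s^{-1}b)$ on generators, your degree-by-degree matching (tensor degree $2$ giving coproduct compatibility, degree $1$ giving differential compatibility with the $\bar a$-correction, degree $0$ giving the curvature condition) is precisely the matching the paper carries out, and the composition check at the end is the same. Where you genuinely diverge is in organization. The paper works contravariantly with pseudocompact algebras throughout, factors every morphism as $(\id,b)\circ(f,0)$, and handles the change-of-curvature piece $(\id,b)$ by writing the induced affine automorphism of the completed tensor algebra as an exponential $e^{\xi}$ of a constant derivation, verifying the chain-map property via the conjugation $e^{\xi}m_{B^*}e^{-\xi}=e^{\operatorname{ad}\xi}m_{B^*}$, which terminates for degree reasons but requires a side remark about clearing factorials in positive characteristic. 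You avoid both the factorization and the exponential by the more elementary observation that $\Omega(f,a)\circ d_{\Omega C} - d_{\Omega D}\circ \Omega(f,a)$ is a derivation along the algebra map $\Omega(f,a)$, hence vanishes once it vanishes on $C_0$ and on the generators $\bar C[-1]$; this buys a shorter and characteristic-independent verification of the same three identities. Two small caveats: your appeal to Proposition \ref{prop:uncurvingmaps} for sign management is inessential and somewhat vague (the paper's proof of this lemma never invokes uncurving, and using it here would require relating the uncurved tensor algebras to the reduced cobar construction, which your sketch does not do); and when expanding $m_2$ on $\phi(s^{-1}c)$ one should note that the cross terms $\bar a(c^{(1)})\cdot s^{-1}\bar f(c^{(2)})$ and $s^{-1}\bar f(c^{(1)})\cdot \bar a(c^{(2)})$ land in degrees $1$ and $0$, so the clean statement "the $m_2$-matching is coproduct compatibility" holds only for the top-degree component — but your phrase about $\bar a$ "supplying exactly the correction terms" accounts for this, so the argument is sound.
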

\begin{proof}
	We will construct a contravariant functor on the opposite category of $\cusemicoa$, writing $\Omega(A)$ for $\Omega(A^*)$ etc.\ whenever $A$ is a split curved pseudocompact algebra.
	
	Let $(f, b): (A,u) \to (B,v)$ be a map of split curved pseudocompact algebras.
	We note that $b: A_0 \to B$ is an $A_0$-bimodule map, so we may consider it as an element of $B$, and thus also as a map $b: B_0 \to B$.
	
	Using this we may factor $(f,b)$ as $(\id_B,b) \circ (f, 0)$, as one does in the absolute case, and deal with the cases $b=0$ and $f = \id$ separately.
	
	We first note that $f^*: B^* \to A^*$ induces a morphism of cobar constructions by extending to the  tensor algebras, and this is the map induced by $(f, 0)$.
	
	Next we describe the morphism of cobar constructions induced by the change of curvature map $(\id, b): (A, d_A, h_A) \to (B, d_B, h_B)$.
	Here $A \cong B$ as split graded pseudocompact algebras (and so $A_0=B_0$) but $d$ and $h$ vary.
	
	The conditions for $(\id,b)$ to be a curved map is simplified to $d_A(x)=d_B(x)+[b,x]=0$ for any $x\in A$ and $h_A=h_B+db+b^2=0$. 
	
	We write $R$ for $A_0 = B_0$ and denote by $V$ the graded $R$-bimodule underlying both $\bar A$ and $\bar B$
	and consider the cobar-construction $T_RV^*[-1]$. 
	
	We define $\Omega(\id,b)$ to be the affine automorphism $\beta$ of $T_RV^*[-1]$ that is defined on any linear generator $v^*\in V^*[-1]$ by $\beta(v^*)=v^*+v^*(s^{-1}b)$ and then  extended uniquely to a (multiplicative) automorphism of $T_RV^*[-1]$.
	To analyse $\beta$ we consider the constant derivation $\xi$ induced by $s^{-1}b$ on $T_RA^*[-1]$, i.e.\ $\xi:v^*\mapsto v(b)$.
	
	If the ground field $\ground$ has characteristic $0$, we have that $\beta = e^\xi$  since the two maps are multiplicative automorphisms of $T_RV^*[-1]$ taking the same value on $V^*[-1]$.
	In general, even though $e^\xi$ has factorials in the denominators, all denominators clear when applying $e^\xi$ to word monomials inside $T_RV^*[-1]$  and then the corresponding map can be linearly extended to the whole of $T_RV^*[-1]$. So we can still formally write $e^\xi$ for the automorphism $\beta$ of $T_RV^*[-1]$. 
	
	It remains to show that $e^\xi$ is compatible with the differentials $m_{A^*}$ and $m_{B^*}$ which we write as $m_{A^*}=m_0^A+m_1^A+m_2^A$  and $m_{B^*}=m_0^B+m_1^B+m_2^B $.
	We need to check $e^\xi m_{B^*}=m_{A^*} e^\xi$ or $e^\xi m_{B^*}e^{-\xi}=m_{A^*}$. 
	Note that 
	$$e^\xi m_{B^*}e^{-\xi}=e^{\operatorname{ad} \xi}m_{B^*}
	=m_{B^*}+[\xi,m_0^B+m_1^B+m_2^B]+\frac{1}{2!}[\xi,[\xi, m_0^B+m_1^B+m_2^B]].$$
	The higher degree components are absent since $[\xi,-]$ lowers the tensor degree by one and $m_{B^*}$ has no terms of degree higher than two.
	
	The degree 2 component of the above formula is $m_2^B$ so we must have $m_2^A=m_2^B$. 
	In other words, the multiplication on $A$ is the same as the multiplication on $B$.
	
	The degree 1 component is $m_1^B+[\xi, m_2^B]$ so we must have $m_1^B+[\xi, m_2^B]=m_1^A$. 
	Dualizing we see that this is true if and only if $d_A = d_B + [b, -]$ since
	\begin{align*}
	[\xi, m_2^B](v^*)(sx) &= v(m_2^*(\xi^* sx)) \\
	& = v(m_2^*(sb \otimes sx + (-1)^{|sb| |sx|}sx \otimes sb)) \\
	& = v(-(-1)^{|b|}s (b.x) - (-1)^{|x|}) s (x.b)) \\
	& = v(s(b.x - (-1)^{|x|} x.b))= v(s[b,x])	
	\end{align*}
	
	Finally the degree zero component is $m_0^B+[\xi,m_1^B]+\frac{1}{2!}[\xi,[\xi, m_2^B]]$ so we must have $m_0^B+[\xi,m_1^B]+\frac{1}{2!}[\xi,[\xi, m_2^B]]=m_0^A$. 
	But this corresponds exactly to $h_B+db+b^2=h_A$ in the formula for the curved map. This follows as we have seen that $[\xi,m_2^B]$ is the derivation of degree 1 of $T_RV^*[-1]$ corresponding to the operator $[b,-]$ in $B$, and then $[\xi,[\xi, m_2^B]]$ corresponds to $[b,b]=2b^2$.
	
	Thus, we define $\Omega$ by sending $(f, b)$ to the map defined by $f^*$ on $B_0^*$
	and by $v^* \to f^*(v^*) + v(s^{-1}b)$ on $\bar B^*[-1]$.
	Note that $v(s^{-1}b): A_0 \to B \to \ground$ is an element in $A_0^* \subset \Omega(A)$.
	
	We check that $\Omega$ is a contravariant functor: 
	Let $(f,b): A \to B$ and $(g, c): B \to C$ be maps of split curved pseudocompact algebras.
	Then on $v^* \in C^*[1]$ we have:
	\begin{align*}
		\Omega(f,b) \circ \Omega(g,c) (v^*) &= \Omega(f,b) (g^*(v^*) + v(c)) \\
		& = (gf)^*(v^*) + f^* g^*v^*(s^{-1}b) + f^* v(c) \\
		& = \Omega(gf, cf + gb)(v^*)
	\end{align*}
	This completes the proof.
\end{proof}

\begin{rem}
	Of course the definition of curved coalgebras and their morphisms is chosen exactly so that Lemmas \ref{lem:cobarsquarezero} and \ref{lem:cobarfunctor} hold.
	In fact, one could define a curved coalgebra to be a triple $(C, d, h)$ such that $d_{\Omega C}$ squares to zero, and a morphism to be a pair $(f,a)$ such that $\Omega(f,a)$ is compatible with the differential.
\end{rem}

If we fix $R = \ground$ then these constructions reduce to the bar and cobar construction for curved conilpotent coalgebras and dg algebras.

\subsection{Koszul adjunction}\label{sect:koszuladjunction}
We can also define MC elements for the convolution algebra $\Hom(C, A)$ as in the conilpotent case.
We will restrict to MC elements that vanish on the coradical of $C$.
\begin{defi}\label{def:semialgebramc} 
Given $C \in \cusemicoa$ and $(A,S) \in \semialg$ we define 
$$\MC(C, A) \coloneqq \amalg_{f_O: C_0 \to S}\MC( \Hom_{S}(\bar C, A)).$$ 
Here we take the coproduct over coalgebra homomorphisms, each such $f_O$ makes  $\bar C$ into $S$-bicomodules, and $\Hom_S$ denotes morphisms in $S$-bicomodules. 
Then $\Hom_{S}(\bar C, A)$ is a curved convolution algebra. 
For $f, g: \bar C \to A$ we define the product $f * g: c \mapsto (-1)^{|g| |c^{(1)}|} f(c^{(1)})g(c^{(2)})$.
Moreover we define $df: c \mapsto d_A(fc) - (-1)^{|f|}(d_Cc)$ and a curvature $h: c \mapsto - f_O \circ h_C(c)$.

The MC condition on $ \Hom_{S}(C, A)$ is the curved MC condition, i.e.\ the MC elements are all $\xi: \bar C \to A$ such that $d\xi + \xi * \xi + h = 0$. 
\end{defi}
\begin{rem}
When $C_0=S=\ground$ we have that $C$ is just an conilpotent curved coalgebra and $S$ is a dg algebra. In that  case MC elements in the convolution (curved) algebra are alternatively called \emph{twisting cochains}. 
This terminology is adopted in \cite{Positselski11}.
\end{rem}
\begin{theorem}\label{thm-semibarcobar}
There is an adjunction
$$\Omega: \cusemicoa \rightleftarrows \semialg: \Ba.$$
In fact, for any $C \in \cusemicoa$ and $A \in \semialg$ we may write:
$$\Hom_{\cusemicoa}(C, \Ba A) \cong \MC(C, A) \cong \Hom_{\semialg}(\Omega C, A)$$
\end{theorem}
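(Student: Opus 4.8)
The plan is to establish the two displayed isomorphisms separately, each by exploiting a universal property that reduces a space of structured morphisms to its restriction to (co)generators, and then to identify the resulting compatibility condition with the curved Maurer--Cartan equation of Definition \ref{def:semialgebramc}. Throughout I would follow the paper's convention of dualizing to split curved pseudocompact algebras whenever signs threaten to obscure the computation, and I would lean on the uncurving functor together with Propositions \ref{prop:uncurvingmaps} and \ref{prop:curvedMC} to transport statements between the curved and uncurved settings.

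For the right-hand isomorphism $\MC(C,A)\cong\Hom_{\semialg}(\Omega C,A)$, I would first unwind a morphism of split semialgebras $\Omega C\to A$. By Definition \ref{def:semicobar} the underlying graded object of $\Omega C$ is the free tensor semialgebra $T_{C_0}\bar C[-1]$, so such a morphism consists of a coalgebra homomorphism $f_O\colon C_0\to S$ together with an $S$-bicomodule algebra map out of $T_{C_0}\bar C[-1]$. By freeness the latter is determined by its restriction to the generators $\bar C[-1]$, which desuspends to a degree-one $S$-bicomodule map $\xi\colon\bar C\to A$ (with $S$ acting on $\bar C$ through $f_O$). The remaining content is that $\xi$ be compatible with the differentials: matching the three pieces $m_0,m_1,m_2$ of the cobar differential against $d_A$ and the multiplication yields, term by term, the equation $h+d\xi+\xi*\xi=0$, where the $m_0$-term produces the convolution curvature $h\colon c\mapsto -f_O\circ h_C(c)$, the $m_1$-term produces $d\xi$, and the $m_2$-term produces the convolution square $\xi*\xi$. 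This is precisely the curved MC condition of Definition \ref{def:semialgebramc}, and the coproduct over the choices of $f_O$ matches the coproduct in the definition of $\MC(C,A)$.

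For the left-hand isomorphism $\Hom_{\cusemicoa}(C,\Ba A)\cong\MC(C,A)$ I would argue dually. Passing to pseudocompact duals, a morphism $C\to\Ba A$ of split curved coalgebras corresponds to a morphism $\Ba^*A\to C^*$ of split curved pseudocompact algebras, and by Proposition \ref{prop:reducedunreduced} and Definition \ref{def:semibar} the source is the completed free tensor algebra $\hat T_{S^*}\bar A^*[-1]$. A morphism out of a free algebra is again determined by its value on generators, namely a map $\bar A^*[-1]\to C^*$ together with the curvature datum $a$ of Definition \ref{def:curvedmorphism}; dualizing back gives a degree-one $S$-bicomodule map $\bar C\to\bar A\subset A$. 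To handle the curvature cleanly I would invoke the identity $\Ba_{nr}=\Eta\,\Ba_v$ from Proposition \ref{prop:reducedunreduced} together with Proposition \ref{prop:uncurvingmaps}, which identifies curved maps with dg maps between uncurvings, and Proposition \ref{prop:curvedMC}, which identifies curved MC elements with ordinary MC elements after adjoining $\eta$. This reduces the compatibility condition to the uncurved bar-cobar correspondence already recorded in Lemma \ref{lem:conilpotentadjoint}, now read in $S$-bicomodules, and again produces exactly the MC equation $d\xi+\xi*\xi+h=0$.

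Finally, naturality of both bijections in $C$ and in $A$ is a routine verification once the generator-level descriptions are in place, since everything is built from the restriction or projection to (co)generators, which is visibly functorial. I expect the genuine difficulty to be concentrated in the curvature bookkeeping: one must check that the constant term $m_0$ of the cobar differential dualizes to the convolution curvature $h=-f_O\circ h_C$, and that the relative coaction over the varying coradical $C_0$ (respectively $S$) enters the convolution product $f*g\colon c\mapsto(-1)^{|g||c^{(1)}|}f(c^{(1)})g(c^{(2)})$ correctly through the cotensor structure. Deploying the uncurving functor is exactly what makes this bookkeeping tractable, by trading the sign-heavy curved identities for their uncurved counterparts.
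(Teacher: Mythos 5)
Your proposal follows essentially the same route as the paper's proof: the decomposition over coalgebra maps $f_O\colon C_0\to S$, the reduction to generators via freeness of the tensor (semi)algebra in $S$-bicomodules on one side and of the completed tensor algebra $\hat T_{S^*}\bar A^*[-1]$ after dualizing on the other, and the uncurving machinery ($\Ba_{nr}=\Eta\,\Ba_v$ with Propositions \ref{prop:uncurvingmaps} and \ref{prop:curvedMC}) to identify differential compatibility with the curved MC equation. The only cosmetic difference is that the paper first records cofreeness of $T_S\bar A[1]$ as a conilpotent coalgebra in $S$-bicomodules before passing to duals, which is precisely the dual of the freeness you invoke, so the two arguments coincide.
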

\begin{proof}
The proof follows the conilpotent case.
Fix $C \in \cusemicoa$ and $(A, S)\in \semialg$.

We begin with the equivalence on the right.
We may decompose the morphism space of semialgebras as a disjoint union over coalgebra homomorphisms.  
We may write the cobar construction as $(\Omega C, C_0)$. 
We have
\[
\Hom((\Omega C, C_0), (A, S)) = \amalg_{f_O: \, C_0 \to S} \Hom_{S}(\Omega C, A)
\]
where we take Hom in dg algebras which are $S$-bicomodules. This makes sense as $f_O$ makes $\Omega C$ into a $S$-bicomodule. 
This matches with the decomposition in the definition of $\MC( C, A)$, so we may compare each component.

Now consider a map from $T_{C_0} \bar C[-1]$ to $A$.
As we have fixed $f_O: C_0 \to S$ this factors uniquely
through the free monoid
in $S$-bicomodules $T_S\bar C[-1]$, and it follows that a map $\Omega C \to A$ is determined by a degree 1 map $\bar C \to A$.
Now the condition that $f$ commutes with differentials is
\[
d_A \circ f = f \circ (m_0 + m_1 + m_2),
\]
which we may spell out as
\[
d_A \circ \phi =  f_O \circ h_C - \phi \circ d_C - (-1)^{|c(1)|}\phi(c^{(1)}).\phi(c^{(2)}),
\]
which is in turn equivalent to the MC equation
$d \phi + \phi * \phi + h = 0$ where $h = - f_O \circ h_C$.

The equivalence on the left is technically a little more complicated.

Again we decompose the hom space of split curved coalgebras as a disjoint union over coalgebra homomorphisms.
We have and we have
\[
\Hom(C, \Ba A) = \amalg_{g_O: \, C_0 \to S} \Hom_{S}(C, \Ba A)
\]
where we take Hom in $S$-coalgebras, i.e.\ curved coalgebras which are $S$-bicomodules. 
This matches with the decomposition in the definition of $\MC(C, A)$, so we may compare each component.

Now $T_{S} \bar A[1]$ is cofree as a conilpotent coalgebra in $S$-bicomodules by the lemma in Section 1.2 of \cite{Vanoystaeyen04}. 
Explicitly, a coalgebra map $f: C \to T_{S} \bar A[1]$ 
is given by a coalgebra map $C \to S$ and an $S$-bicomodule map $C \to  \bar A[1]$. 

The first map is just the composition of the counit $C \to C_0$ with $f$. 

The second map is equivalent to a map $\phi: \bar C \to \bar A$ of degree $1$. To see this we note that by assumption $C_0$ is cosemisimple. Thus by compatibility with the coproduct $C_0$ is mapped to $S \subset T_S\bar A[1]$ and the map $C \to \bar A[1]$ is induced by $\phi$.  

It remains to unravel that curved coalgebra maps are exactly those such that $\phi$ satisfies the MC condition. 
One may just check this by hand, but in particular the signs are rather unpleasant.

To simplify this computation we uncurve the (nonunital) curved algebra structure on $\Hom_S(\bar C, A)$.
We dualize and consider $(f,c) \in \Hom(\Ba^*A, C^*)$, which by Proposition \ref{prop:uncurvingmaps} and Definition \ref{def:semibar} is the same as a map $f_c: \Ba^*_{nr}(A) \to \Eta C^*$ that send $a$ to $f(a)$ and $\eta_{\Ba^*A}$ to $\eta_{C^*} + c$.

As $\Ba^*_{nr}(A)$ is free on $A$ the map $f_c$ is represented by an element $\phi: A^* \to \Eta\bar C^*$.
Here we use that the kernel of the augmentation map on $
\Eta C^*$ is $\Eta\bar C^*$.
The standard computation for the bar construction of an augmented algebra shows that $f_c$ is dg if and only if $\phi$ is an MC element in $\Hom(A^*, \Eta\bar C^*) \cong A \otimes \Eta\bar C^*$. See for example Section A.3 in \cite{Anel13}; the computation is unaffected by working in $S$-bicomodules.

Moreover, the MC element $\phi$ takes the form $\phi' + \id_A \otimes \eta_{C^*} \in A \otimes \Eta\bar C^*$.
But  $A \otimes \Eta C^* \cong \Eta(A \otimes C^*)$ and $\eta_{A \otimes C^*}$ is $\id_A \otimes \eta_{C^*}$.
Thus by Lemma \ref{prop:curvedMC} it is identified with an MC element in $A\otimes C$.
\end{proof}

\subsection{The bar construction of a dg category}\label{sect:bardgcategory}

Given a dg category $\cat D$ we consider it as a semialgebra $(V_{\cat D}, \ground[\Ob(\cat D)])$ as in Proposition \ref{prop:comodule}. We can then define the \emph{bar construction of $\cat D$}, denoted $\Ba(\cat D)$ as the split curved coalgebra $B(V_{\cat D}, \ground[\Ob(\cat D)])$.

Here coradical of $\Ba(\cat D)$ is $\ground[\Ob(\cat D)]$, and thus the bar construction is an example of a \emph{pointed curved coalgebra}.
\begin{example}\label{quiver}
Consider a graded quiver $Q$ and associate a dg category $\cat D_Q$ to it as follows. 	The collection of objects of $\cat D$ coincides with the vertices of $Q$ and morphisms between objects $O_1$ and $O_2$ is the graded vector space spanned by the set of corresponding arrows together with identity morphisms if $O_1=O_2$. All compositions not involving the identity morphisms are zero, as well as the differentials. Then $\Ba(\cat D)$ is called the \emph{path coalgebra} of the quiver $Q$; note that it has the zero differential. Note that in the case when $Q$ has only one vertex, $\Ba(\cat D)$ is just the cofree (conilpotent) coalgebra on the set of arrows of $Q$.  In general, $\Ba(\cat D)$ it is known to be hereditary, i.e. the abelian category of its comodules has homological dimension not greater than 1, cf. \cite[Proposition 8.13]{Simson01} or \cite[Theorem 4]{Chin02}.
\end{example}
\begin{defi}
A \emph{pointed curved coalgebra} is a split curved coalgebra  whose coradical is a direct sum of copies of $\ground$. Its dual pseudocompact algebra will be referred to as a
 pointed curved pseudocompact algebra.
\end{defi}

By analogy with categories we denote by $\Ob(C)$ the set of grouplike elements of a pointed dg coalgebra. 
Then the coradical of $C$ is $\ground[\Ob(C)]$. 

We note that any morphism $C \to C'$ of pointed dg coalgebras induces a morphism $\Ob(C) \to \Ob(C')$.

We denote the full subcategory  of $\cusemicoa$ formed by pointed curved coalgebras by $\ptdco$.
We then have the following proposition, which is the  bar-cobar adjunction for dg categories. 

\begin{prop}\label{prop:catadjunction}
There is an adjunction $\Omega:\ptdco\rightleftarrows \dgCat: \Ba$ induced by equivalences $\Hom(\Omega C, \cat D) \cong \MC(C, \cat D) \cong \Hom(C, \Ba\cat D)$. 
\end{prop}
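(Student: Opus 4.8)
The plan is to obtain this adjunction by \emph{restricting} the general bar--cobar adjunction of Theorem~\ref{thm-semibarcobar} between split curved coalgebras $\cusemicoa$ and split semialgebras $\semialg$ along the inclusions of the relevant subcategories. Recall from Proposition~\ref{prop:comodule} that dg categories are precisely the semialgebras of the form $(V,\ground[S])$; the split ones among these (equivalently, those with nonzero identity morphisms) form a full subcategory of $\semialg$, since a morphism of semialgebras $(V,\ground[S])\to(V',\ground[S'])$ is exactly a dg functor and morphisms of split semialgebras need not respect the retract. Likewise $\ptdco$ is \emph{by definition} a full subcategory of $\cusemicoa$. Consequently, once I verify that $\Omega$ and $\Ba$ restrict to functors $\ptdco\to\dgCat$ and $\dgCat\to\ptdco$, the hom-set bijections $\Hom(\Omega C,\cat D)\cong\MC(C,\cat D)\cong\Hom(C,\Ba\cat D)$ are inherited verbatim from Theorem~\ref{thm-semibarcobar}, and the adjunction follows formally, a natural bijection of hom-sets restricting along full subcategories onto which both functors land.

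First I would check that $\Omega$ lands in dg categories. For $C\in\ptdco$ the coradical is $C_0=\ground[\Ob(C)]$, the linearization of the set of grouplike elements. By Definition~\ref{def:semicobar} the cobar construction $\Omega(C)=(T_{C_0}\bar C[-1],\,m_0+m_1+m_2)$ is a split semialgebra with ground coalgebra $C_0$, hence of the form $(V,\ground[\Ob(C)])$; by Proposition~\ref{prop:comodule} it therefore corresponds to a genuine dg category on the object set $\Ob(C)$, whose identity morphisms are the nonzero images of the grouplike elements under the coaugmentation $C_0\hookrightarrow T_{C_0}\bar C[-1]$.

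Next I would check that $\Ba$ lands in $\ptdco$. Viewing a dg category $\cat D$ as the semialgebra $(V_{\cat D},\ground[\Ob(\cat D)])$, the bar construction of Definition~\ref{def:semibar} produces a split curved coalgebra whose coradical is $\ground[\Ob(\cat D)]$, a direct sum of copies of $\ground$; hence $\Ba(\cat D)$ is pointed and lies in $\ptdco$. It then only remains to identify the middle term with the one produced by Theorem~\ref{thm-semibarcobar}: specializing Definition~\ref{def:semialgebramc} with $C_0=\ground[\Ob(C)]$ and $S=\ground[\Ob(\cat D)]$, the coalgebra maps $C_0\to S$ indexing the coproduct in $\MC(C,\cat D)$ are exactly maps of object sets $\Ob(C)\to\Ob(\cat D)$, which is precisely the indexing along which both $\Hom(\Omega C,\cat D)$ and $\Hom(C,\Ba\cat D)$ decompose.

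The hard part is the splitness hypothesis, which is the only genuine obstacle to the statement as written. The bar construction of Definition~\ref{def:semibar} is defined only on \emph{split} semialgebras, and by the remark following Proposition~\ref{prop:comodule} a dg category yields a split semialgebra exactly when every endomorphism space is nonzero; this can fail for a degenerate object whose identity vanishes, forcing all of its morphisms to vanish as well. This is precisely the situation that the $\dgCat^\prime$ convention of the introduction is designed to exclude, and the single degenerate one-object category must be handled separately by hand, exactly as $\Ba(0)=*$ and $\Omega(*)=0$ are set by fiat in the algebra case of Theorem~\ref{thm-barcobaralgebra}. Modulo this bookkeeping the proof is a direct transcription of the conilpotent argument, and the one conceptual point beyond Theorem~\ref{thm-semibarcobar} is that the \emph{pointed} condition on curved coalgebras corresponds under $\Omega$ and $\Ba$ exactly to the $(V,\ground[S])$ condition singling out dg categories among semialgebras.
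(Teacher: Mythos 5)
Your proposal follows the paper's own proof essentially verbatim: the paper likewise obtains the adjunction by restricting Theorem \ref{thm-semibarcobar} to the subcategories $\dgCat \subset \semialg$ and $\ptdco \subset \cusemicoa$, checking that $\Ba$ lands in pointed curved coalgebras and that $\Omega(C)$ is a semialgebra over $\ground[\Ob(C)]$, hence a dg category by Proposition \ref{prop:comodule}. Your additional caveat about splitness (that a dg category with a vanishing identity morphism is not a split semialgebra and so has no bar construction) is a genuine point that the paper's two-line proof glosses over but concedes later in Section \ref{sect:coalgebramodel}, where $\dgCat'$ and the final object $*$ are introduced to resolve exactly this issue.
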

\begin{proof}
We restrict the bar-cobar adjunction from Theorem \ref{thm-semibarcobar} to the subcategories $\dgCat \subset \semialg$ and $\ptdco \subset \cusemicoa$.

Our discussion above shows that the functor $\Ba$ lands in pointed curved coalgebras. Conversely $\Omega(C)$ is a semialgebra over $\ground[\Ob(C)]$, and thus a dg category over $\ground$ by Proposition \ref{prop:comodule}.
\end{proof}
\begin{rem}
Given a pointed curved coalgebra $C$ and a dg category $\cat D$ we can write the MC elements from Definition \ref{def:semialgebramc} explicitly as follows:
 \begin{align*}
&\MC(C, \cat D) \cong \coprod_{O: \Ob(C) \to \Ob(\D)} 
\MC(\Hom_{\ground[\Ob(\cat D)]}(\bar C, V_{\cat D})) \\
&\cong \coprod_{O: \Ob(C) \to \Ob(\D)} 
\MC \left[\prod_{s, t\in\Ob(C)} \Hom_{\ground}\left(\ground .e_{s} \boxempty _{\ground[\Ob(C)]} \bar C \boxempty_{\ground[\Ob(C)]} \ground.e_{t}, \Hom_{\cat D}(O(s),O(t))\right) \right] \\
&\cong \coprod_{O: \Ob(C) \to \Ob(\D)} 
\MC \left[\prod_{s, t\in\Ob(C)} 
e_{s} \bar C^{*} e_{t} \otimes_{\ground} \Hom_{\cat D}(O(s),O(t)) \right]. 
\end{align*}

Here we abuse notation and write $e_{s}$ and $e_{t}$ for the canonical elements in $C$ and in $C^{*}$ corresponding to objects $s$ and $t$ in $\Ob(C)$. 

The first isomorphism follows as coalgebra morphisms $\ground[\Ob(C)] \to \ground[\Ob(\D)]$ are in bijective correspondence with maps on the sets of grouplike elements.

The second isomorphism holds as we can use $k[S] = \oplus_{s \in S} \ \ground .e_{t}$ to rewrite \[
\bar C = \oplus_{s,t \in \Ob(C)} \ \ground .e_{s} \boxempty _{\ground[R]} \bar C \boxempty_{\ground[R]} \ground.e_{t}
\]
and unravel the definition of bicomodule maps.

The last isomorphism is obtained by dualizing.
\end{rem}

\subsection{Model structure}\label{sect:coalgebramodel}

We would like to promote the adjunction in Proposition \ref{prop:catadjunction} to a Quillen equivalence between the Dwyer-Kan model structure on $\dgCat$ and a suitable model structure on $\ptdco$. 
\begin{rem}
	We use here the term Dwyer-Kan model structure for the model category whose weak equivalences are quasi-equivalences \cite[Th\'eor\`eme 1.8]{Tabuada07}. The generating (acyclic) cofibrations will be recalled below. This is the first of three model structures constructed by Tabuada and the name is by analogy with the closely related Dwyer-Kan-Bergner model structure on simplicial categories.
\end{rem}

However, there is an obvious obstacle in that curved coalgebras do not have a final object. 
A related issue is that a dg category having objects whose identity morphisms are equal to zero, are not representable by split semialgebras, and thus do not admit a bar construction.

The main step in avoiding these problems is adding a final object to curved coalgebras and defining its cobar construction to be the dg category with one object and morphism space equal to zero. 

We thus formally define the symbol $*$ to be a curved coalgebra which receives exactly one morphism from every curved coalgebra and has no outgoing morphisms (except for a unique endomorphism). 

We also declare $*$ with its identity map to be a split curved coalgebra.
(While $*$ has no maximal cosemisimple subcoalgebra, it has a unique such admitting a splitting, namely itself.) 

For any split curved coalgebra $C$ we declare that there is a unique map $C \to *$ and no map going the other way.\footnote{
	This case is not covered by Definition \ref{def:curvedmorphism} as the curved map $C \to*$ is not of the form $(f,a)$. Therefore we make this definition by hand.}

We denote by $\cusemicoa^*$ the finalized category 
of split curved  coalgebras, whose objects are all split curved coalgebras together with $*$ and whose morphisms are as above.

We consider the full subcategory $\ptdco^*$ of $\cusemicoa^*$ consisting of all pointed curved coalgebras together with  $*$.
The construction of products of pointed curved coalgebras is somewhat subtle. To explain it, we dualize and consider the equivalent problem
of constructing coproducts of pointed curved pseudocompact algebras. For two graded pseudocompact algebras $A$ and $B$ we will write $A\coprod B$ for their coproduct in the pseudocompact category (i.e. the completion of their ordinary coproduct). Now suppose that $A, B$ are pointed and split; the splitting allows us to view their maximal semisimple quotients  $A_0$ and $B_0$  as subalgebras of $A$ and $B$. We denote by $A\coprod_{A_0,B_0} B$ the quotient of $A\coprod B$ by the closed two-sided ideal generated by the elements $a_0b_0-b_0a_0$ (i.e. we impose the relation that the subalgebras $A_0$ and $B_0$ commute).

\begin{lem}\label{lem:coproducts}\begin{enumerate}
		\item 
Let $(A,d_A)$ and $(B,d_B)$ be pointed split pseudocompact dg algebras. Then their categorical coproduct is isomorphic to  $A\coprod_{A_0,B_0} B$. The differential is induced by the differentials $d_A$ and $d_B$ in $A$ and $B$.
\item Let $(A,d_A,h_A)$ and $(B,d_B,h_B)$ be pointed curved pseudocompact  algebras. Then their categorical coproduct is the pointed curved pseudocompact algebra  $A\coprod_{A_0,B_0} B\langle \langle c\rangle\rangle$ where $c$ is a generator in degree 1. 	The differential is given by $a \mapsto d_A a$ for $a\in A$, $b \mapsto d_B b-[c,b]$  for $b\in B$ and $c \mapsto h_B - h_A - c^2$ and the curvature element is $h_A$.
\end{enumerate}	
\end{lem}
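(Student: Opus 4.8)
The plan is to verify the universal property of the coproduct directly by using the uncurving functor to reduce the curved statement (2) to the uncurved statement (1), which is the more elementary computation. First I would establish (1). Here $A \coprod_{A_0, B_0} B$ is the pseudocompact coproduct with the semisimple parts $A_0$ and $B_0$ forced to commute, and since there is no curvature the two differentials $d_A$ and $d_B$ assemble into a well-defined derivation on the quotient (one checks the relations $a_0 b_0 - b_0 a_0$ are preserved, which is immediate as $A_0, B_0$ are concentrated in degree $0$ and killed by the differentials). To see this object is the categorical coproduct I would exhibit the two structure maps $A \to A \coprod_{A_0,B_0} B \leftarrow B$ and check that a pair of dg maps $(\phi: A \to D, \psi: B \to D)$ of pointed split pseudocompact dg algebras factors uniquely; the only subtlety is that the images of $A_0$ and $B_0$ in $D$ must commute, which is exactly what the quotient imposes, and the splitting data is respected because a morphism of \emph{pointed} algebras carries semisimple parts to semisimple parts.

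For part (2) the key idea is to apply the uncurving functor $\Eta$ of Proposition~\ref{prop:uncurving}, which is left adjoint to the inclusion of dg algebras into curved algebras and hence preserves colimits. Applying $\Eta$ to the two curved algebras $A$ and $B$ adjoins power-series generators $\eta_A$ and $\eta_B$ in degree $1$ with $d\eta_A = h_A - \eta_A^2$ and $d\eta_B = h_B - \eta_B^2$. By the adjunction, the coproduct of curved algebras $A$ and $B$ uncurves to the coproduct of the dg algebras $\Eta A$ and $\Eta B$, which by part (1) is $\Eta A \coprod_{A_0,B_0} \Eta B$. I would then reorganize this coproduct: setting $c = \eta_B - \eta_A$, the two adjoined generators can be traded for the single curved-algebra generator $\eta_A$ together with the degree-$1$ element $c$, so that the uncurving of the claimed answer $A \coprod_{A_0,B_0} B\langle\langle c\rangle\rangle$ is isomorphic to $\Eta A \coprod_{A_0,B_0} \Eta B$. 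Concretely I would use Lemma~\ref{lem:recognition} to recognize $A \coprod_{A_0,B_0} B\langle\langle c\rangle\rangle$ as a curved algebra whose uncurving is the computed coproduct: the curvature is read off as $h_A$ and the stated differential formulas $d(a) = d_A a$, $d(b) = d_B b - [c,b]$, $d(c) = h_B - h_A - c^2$ are exactly those forced by the substitution $\eta_B = \eta_A + c$ and the relation $h - d\eta + \eta^2 = 0$ on the generators.

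Finally I would confirm that $c$ plays the role of a change-of-curvature indeterminate: the generator $b$ of $B$ is included via the \emph{twisted} map (this is why $d(b) = d_B b - [c,b]$ rather than $d_B b$), reflecting that $B$ embeds into the coproduct through the curved morphism $(\id, c)$ in the sense of Definition~\ref{def:curvedalg}, while $A$ embeds untwisted. The verification that $d(c) = h_B - h_A - c^2$ is consistent amounts to the identity $(\id, c): (B, d_B, h_B) \to (A\coprod_{A_0,B_0}B\langle\langle c\rangle\rangle, d, h_A)$ being a genuine curved morphism, i.e.\ condition (2) of Definition~\ref{def:curvedalg} reads $h_B = h_A + dc + c^2$, which rearranges to the stated formula. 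The main obstacle I anticipate is bookkeeping rather than conceptual: ensuring that passing through the uncurving adjunction interacts correctly with the completion in the pseudocompact coproduct and with the imposed commutativity relation on $A_0, B_0$, and checking that the single generator $c$ genuinely suffices (i.e.\ that no further relations among $\eta_A, \eta_B$ survive beyond $c = \eta_B - \eta_A$). Dualizing back via the contravariant equivalence $\cusemicoa \simeq \cusemicoa^{\mathrm{op}}$ then transfers the result to products of pointed curved coalgebras, as needed.
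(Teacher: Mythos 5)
Your part (1) is essentially the paper's argument and is fine. In part (2) the computational core is also sound: the change of variables $c=\eta_B-\eta_A$ does identify $\Eta A\coprod_{A_0,B_0}\Eta B$ with $\Eta\bigl(A\coprod_{A_0,B_0}B\langle\langle c\rangle\rangle\bigr)$, and Lemma \ref{lem:recognition} then certifies that the stated differential and curvature make $A\coprod_{A_0,B_0}B\langle\langle c\rangle\rangle$ a genuine curved algebra whose uncurving is the dg coproduct from part (1). But there is a genuine gap at the pivotal step: you never verify the universal property of this candidate in the category of \emph{curved} algebras. The inference ``$\Eta$ is a left adjoint, hence preserves colimits, hence the curved coproduct uncurves to $\Eta A\coprod\Eta B$'' is circular --- it presupposes that the curved coproduct exists, which is what is being proved --- and even granting existence it cannot be reversed: the adjunction of Proposition \ref{prop:uncurving} only governs maps into \emph{uncurved} (dg) targets, and by Proposition \ref{prop:uncurvingmaps} the functor $\Eta$ is faithful but not full, so knowing $\Eta X\cong \Eta A\coprod_{A_0,B_0}\Eta B$ does not identify $X$ as the coproduct. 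What you actually check are maps \emph{into} the candidate (that $A$ embeds strictly and $B$ embeds via the curved morphism $(\iota_B,c)$); what is needed are maps \emph{out of} it: that every pair of curved maps $(f,a)\colon A\to C$, $(g,b)\colon B\to C$ into an arbitrary pointed curved pseudocompact algebra $C$ factors uniquely through it.

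This missing step is exactly what the paper supplies, by exhibiting the factorization explicitly (following Positselski's Lemma 9.2): the map $(h,x)$ with $h|_A=f$, $h|_B=g$, $h(c)=b-a$, and second component $x$ built from $a$ on $A_0\otimes B_0$, followed by a uniqueness check. Alternatively, your uncurving strategy can be completed, but only by invoking Proposition \ref{prop:uncurvingmaps} in both directions: curved maps $X\to C$ are precisely the dg maps $\Eta X\to\Eta C$ carrying $X$ into $C$ and $\eta_X$ to $x+\eta_C$; under the dg universal property of $\Eta X\cong\Eta A\coprod_{A_0,B_0}\Eta B$, and using $\eta_X=\eta_A$, $\eta_B=\eta_X+c$, such maps correspond to pairs of dg maps $\Eta A\to\Eta C$, $\Eta B\to\Eta C$ satisfying the same two side conditions, i.e.\ to pairs of curved maps $A\to C$, $B\to C$. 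That bookkeeping --- checking that the side conditions decompose correctly over both generators --- is the actual content of the universal property, and it is absent from your proposal.
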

\begin{proof}
	Let us prove (1). First note that $A\coprod B$ is pointed. Indeed, let $C$ be a simple finite dimensional algebra and consider a surjective map $A\coprod B\to C$. 
	Such a map comes from a pair of maps $A\to C$ and $B\to C$ and so it factors through $A_0\coprod B_0$ and thus, through $A_0\otimes B_0$  (which is the maximal semisimple quotient of $A_0\coprod B_0$). 
	Since $A_0\otimes B_0$ is a product of copies of $\ground$, so is $C$, which implies that $A\coprod B$ is pointed. The same argument shows that the maximal semisimple quotient of $A\coprod B$ is $A_0\otimes B_0$, and the splittings $A_0\to A, B_0\to B$ determines a splitting $A_0\otimes B_0\to A\coprod B$. 
	Thus, $A\coprod B$ is a pointed split pseudocompact dg algebra. 
	
	 Next, suppose we are given a pair of morphisms $f:A\to C$ and $g:B\to C$ into a pointed curved pseudocompact dg algebra $C$.  The maps $f$ and $g$ determine a map $f\coprod g:A\coprod B\to C$ and it is easy to see that it restricts to a map $A_0\coprod B_0\to C_0$. 
	 Since $C_0$ is commutative, the latter map factors through $A_0\otimes B_0$ and it follows that $f\coprod g$ factors through $A\coprod_{A_0,B_0} B$. Conversely, any morphism $A\coprod_{A_0,B_0} B\to C$ restricts to a pair of morphisms $A\to C$ and $B\to C$. This establishes a 1-1 correspondence between maps $A\coprod_{A_0,B_0} B\to C$ and such pairs.
	 
	The proof  of (2) follows readily, with the modification of the coproduct in (1) taking into account the curvature as in \cite[Lemma 9.2]{Positselski11}.  
	Explicitly, given maps $(f,a): A \to C$ and $(g,b): B \to C$ the map $(h, x): A \amalg_{A_0, B_0} B\langle\langle c\rangle\rangle \to C$ is defined by $h|_A = f$, $h|_B = g$ and $h(c) = b-s$ while $x: A_0 \otimes B_0 \to C$ takes $r \otimes s$ to $a(r)$.
		
	Note that this construction appears asymmetrical, but there is a unique isomorphism to the alternative construction where the differential on $A$ gets twisted and the curvature element is inherited from $B$. 
\end{proof}	
\begin{lem}
	$\ptdco^*$ is complete and cocomplete.
\end{lem}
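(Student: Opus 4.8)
The plan is to verify the two standard generating classes of (co)limits: a category is complete as soon as it admits all small products and all equalizers, and cocomplete as soon as it admits all small coproducts and coequalizers. The terminal object is $*$ and the initial object is $(0,0)$, and these serve as the empty product and empty coproduct respectively, so it remains to construct the nonempty cases and to check that the adjoined object $*$ behaves correctly. Throughout I would freely pass through the duality between $\cusemicoa$ and split curved pseudocompact algebras, using whichever side makes a given construction most transparent.

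For cocompleteness I would argue that colimits in $\ptdco^*$ are created by the forgetful functor to graded vector spaces. Since the underlying graded coalgebras form a category whose colimits are created by the forgetful functor (as comonoids in graded vector spaces, where the tensor product preserves colimits in each variable), the coproduct of a family $\{C_i\}$ is the direct sum $\bigoplus_i C_i$ with the block comultiplication, and the coequalizer of two maps is the evident quotient coalgebra; the differential, curvature and splitting are then induced summand-wise or descend to the quotient. One then checks that pointedness persists: the grouplike elements of a direct sum are the disjoint union $\coprod_i \Ob(C_i)$, so its coradical is again a sum of copies of $\ground$, and a quotient of a pointed coalgebra is pointed. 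The final object is handled by its universal behaviour: because $*$ admits no outgoing morphisms other than its identity, any coproduct or coequalizer in which $*$ appears as a factor (respectively as target) is itself $*$.

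For completeness the nontrivial inputs are products and equalizers. Binary, hence by induction finite, products are exactly the dual of the coproduct of pointed curved pseudocompact algebras constructed in Lemma~\ref{lem:coproducts}; arbitrary small products are obtained by the same recipe applied to a family, choosing one factor's curvature as a basepoint and adjoining a degree-one generator $c_i$ to reconcile each remaining curvature with it, then completing in the pseudocompact topology and imposing commutativity of the various semisimple parts. The equalizer of two morphisms $(f,a),(g,b)\colon C\to D$ I would construct as the largest split curved subcoalgebra of $C$ contained in the subspace on which $f$ and $g$, together with their linear correction terms $a$ and $b$, agree; this exists because the sum of all subcoalgebras contained in a fixed subspace is again such a subcoalgebra, by the local finiteness (fundamental theorem) of coalgebras, and one verifies that the differential, the curvature and the splitting restrict to it and that it remains pointed. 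As before, $*$ is neutral for products and absorbing where the target is $*$.

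The main obstacle I anticipate lies entirely on the completeness side. Extending Lemma~\ref{lem:coproducts} from a binary to an arbitrary family forces one to keep careful track of the infinitely many curvature-reconciling generators and of the pseudocompact completion, and to confirm that the resulting object is still pointed and split; and the equalizer construction requires checking that the largest-subcoalgebra operation is compatible with the curved structure and with the correction terms $a$ and $b$, which is where the bookkeeping of the morphism data $(f,a)$ becomes delicate. By contrast the colimit constructions, being computed on underlying graded vector spaces, should be essentially formal once the persistence of pointedness is established.
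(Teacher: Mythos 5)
Your completeness half is essentially the paper's argument: products are obtained by dualizing and extending Lemma~\ref{lem:coproducts} to arbitrary families (one distinguished index, degree-one generators $c_i$ reconciling the curvatures, commutativity of the semisimple parts imposed), and equalizers always exist -- the paper computes them dually as the quotient of the target pseudocompact algebra by the elements $f(x)-g(x)$ and $a(r)-b(r)$, which is a cleaner formulation than your ``largest subcoalgebra'' but amounts to the same thing. So the side you flagged as the main obstacle actually goes through.

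The genuine gap is on the cocompleteness side, precisely where you declared things ``essentially formal.'' Coequalizers in $\ptdco^*$ are \emph{not} computed on underlying graded vector spaces, because morphisms are pairs $(f,a)$ with correction terms and the coequalizing condition involves them. Concretely, a cocone $(q,c)\colon D \to Q$ coequalizes $(f,a),(g,b)\colon C \to D$ iff $qf = qg$ \emph{and} $c\circ(f-g) = q\circ(b-a)$; the naive quotient of $D$ by the image of $f-g$, equipped with $c=0$, satisfies the first condition but not the second unless $q(b-a)=0$. Worse, when $a \neq b$ there may exist \emph{no} honest pointed curved coalgebra admitting such a pair $(q,c)$ at all. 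This is exactly what the paper's proof handles by dualizing: the coequalizer of coalgebras is the equalizer of the dual pseudocompact algebras, which requires a degree-one map $e\colon E_0 \to A$ with $fe + ah = ge + bh$; when no such $e$ exists, the equalizer does not exist among honest algebras, and the limit in the initialized category is the initial object -- i.e.\ the coequalizer in $\ptdco^*$ is the adjoined object $*$. So $*$ is needed not merely as the empty product (and your ``absorbing'' remark about $*$ as a target is beside the point, since $*$ has no outgoing morphisms and so never appears as the source of a parallel pair); it is the value of genuinely obstructed coequalizers, and any proof that omits this case is incomplete.
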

\begin{proof}
	This does not differ much from the conilpotent case treated  in \cite[Sections 9.2, 9.3]{Positselski11}.
	
	We consider limits first.
	It is enough to compute equalizers and products in $\ptdco^*$, which we do by considering the dual category.
	
	So let $A$ and $B$ be pointed curved pseudocompact algebras with maximal semisimple quotients $A_0$ and $B_0$. 
	The coequalizer of $(f,a), (g,b): A \rightrightarrows B$ is the quotient of $B$ by $f(x)-g(x)$ and $a(r) - b(r)$ for $x \in A$ and $r\in A_0$. 
	This is naturally a pointed curved pseudocompact algebra over the equalizer of $a, b: A_0 \rightrightarrows B_0$.

The product of two pointed curved pseudocompact algebras is constructed in Lemma \ref{lem:coproducts}, (2) above.

	For an arbitrary non-empty product we follow the same playbook. Consider pointed curved pseudocompact algebras $A^i$ with maximal semisimple quotients $A_0^i$ and single out an index $i_0$. We define their categorical coproduct as
	\[
	 A^{i_0} \coprod_{\{A^i_0, \, i \in I\}} \left(\coprod_{i\in I \setminus\{i_0\}}A^i\langle\langle c_i\rangle\rangle\right),
	\]
	where we take the free product of all $A^i$ together with an additional generator for each $i \in I \setminus \{i_0\}$ and quotient out by the ideal generated by all commutators of elements of the $A^i_0$. 
	
	The differential is defined as follows: Let $d^i$ and $h^i$ be the differential and curvature of $A^i$.
	Then for $a \in A^{i^0}$
	we define $a \mapsto d^{{i_0}}a$ and for $a \in A^{i \neq i_0}$ we define $a  \mapsto d^{i}a - [c_i, a]$.
The differential of 	$c_i$ is $h^{i} - h^{i_0} - c_i^2$
	The curvature element is 
	$h^{i_0}$.
	
	This coproduct is a pointed curved pseudocompact algebra with maximal semisimple quotient $\bigotimes_{i\in I}A^i_0$.
	Then  given a family of curved maps $(f^i, a^i):  A^i \to C$, the map $(f,x): A^{i_0} \coprod_{\{A^i_0,\, i\in I\}} \left(\coprod_{i \neq i_0} A^i\langle\langle c_i\rangle\rangle \right) \to C$ is determined by $f|_{A^i} = f^i$ and $f(c_i) = a^i - a^{i_0}$.
	while $x$ is given by $a^{i_0}$.
	
	The empty product, i.e.\ the final object, in $\ptdco^*$ is by definition $*$.
	
	Arbitrary coproducts in $\ptdco^*$ exist and are preserved by the forgetful functor to pairs of graded vector spaces, i.e.\ we may just equip the direct sums of the underlying graded vector spaces with differential, curvature and coproduct.
	In particular the empty coproduct is given by the split curved coalgebra $(0,0)$.
	
	To consider coequalizers we dualize again. 
	The equalizer of two pointed curved pseudocompact algebras can be computed as follows. 
	Let $(f,x), (g,y): (A, u) \to (B,v)$ be an equalizer diagram.
	We first define $k: (E, r) \to (A,u)$ to be the equalizer of the underlying diagram of graded split pseudocompact algebras. In particular $E$ is a subalgebra of $A$ and $fk = gk: E \to B$.
	For this to have a curved structure we need to find $z \in A$ such that $x + fz = y + gz$.
	If no such $z$ exists then there can't be any map from a curved algebra equalizing $(f,x)$ and $(g,y)$. So there is no equalizer in curved pointed pseudocompact algebras and in the initialized category the equalizer is the initial object.
	
	 If such a $z$ exists then we define $d_E(e) = d_A(e) + [z, e]$ and $h_E = h_A + dz + z^2$ and $(k, z): (E, r) \to (A,u)$ equalizes $(f,x)$ and $(g,y)$. (Here we denote differentials and curvature terms in the different algebras by subscript.)
	The cone $(k,z)$ is universal as for any other $(k', z'): (E', r') \to (A,u)$ we know $k'$ factors through $k$ by considering the underlying gradeds. So we may assume $k= k'$, $E =E'$ and just consider the case $z \neq z'$. Then $z-z'$ is in $E$ and $(\id_E, z-z')$ is a unique isomorphism from $(k,z')$ to $(k,z)$, showing we have indeed constructed the limit.
	
	Thus $\ptdco^*$ is complete and cocomplete.
\end{proof}

We define the cobar construction of $*$ to be the dg category with one object and only the zero morphism, which we will call the zero dg category.

We similarly define the bar construction of the zero dg category as $*$.

There are other dg categories which have some identity morphisms equal to zero.
Instead of defining new curved coalgebras to be their bar constructions, we will define a new model category $\dgCat'$.
It is the full subcategory of $\dgCat$ which consists of all dg categories where no identity morphism equals 0, together with the zero dg category. 

To put a model structure on $\dgCat'$ we first recall from \cite[Th\'eor\`eme 1.8]{Tabuada07} the sets $\mathcal I$ and $\mathcal J$ of generating cofibrations and generating acyclic cofibrations in $\dgCat$:

 By abuse of notation let $\emptyset$ be the empty dg category and let $\mathcal A$ be the dg category with a single object $*$ and $\Hom(*,*) = \ground$. Let $e: \emptyset \to \mathcal A$ be the unique functor.
For each $n \in \mathbb Z$ let $D(n)$ be the dg category with two objects 1 and 2 and with $\Hom(1,1)=\Hom(2,2)= \ground$, $\Hom(1,2) = \operatorname{cone}(\id: \ground[-n] \to \ground[-n])$ and $\Hom(2,1)=0$. 
For each $n \in \mathbb Z$ let $S(n)$ be the dg category which only differs from $D(n)$ by $\Hom(1,2) =  \ground[-n]$ and let $i_n: S(n) \to D(n)$ be the natural inclusion functor.

Then $\mathcal I$ is the set consisting of $e$ and all $i_n$.

Let $\mathcal B$ be the dg category with two objects $1$ and $2$ that only differs from $D(n)$ in that $\Hom(1,2) = 0$. Let $\mathcal K$ be the dg category with two objects $1$, $2$ and morphisms generated by the identities as well as $f \in \Hom^0(1,2)$, $g \in \Hom^0(2,1)$, $r_1 \in \Hom^{-1}(1,1)$, $r_2 \in \Hom^{-1}(2,2)$ and $r_{12} \in \Hom^{-2}(1,2)$ satisfying $df = dg =0$, $dr_1 = gf - \id_1$, $dr-2 = fg - \id_2$ and $dr_{12} = fr_1 - r_2f$.
Let $k$ be the natural inclusion $\mathcal A \to \mathcal K$ sending 1 to 1 and let $j_n$ be the natural inclusion $\mathcal B \to D(n)$.

Then $\mathcal J$ is the set consisting of $k$ and all $j_n$.

\begin{lem}
	The generating cofibrations $\mathcal I$ of $\dgCat$ turn into $\dgCat'$ a model category.
\end{lem}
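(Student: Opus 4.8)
The plan is to transfer the Dwyer-Kan model structure from $\dgCat$ to $\dgCat'$ by exhibiting $\dgCat'$ as closed under the relevant limits and colimits and verifying that the usual generating (trivial) cofibrations already land in $\dgCat'$. First I would recall Tabuada's generating cofibrations: these are the functor $\emptyset \to \ground$ sending nothing to the category with one object and zero endomorphism ring (the freely-walking object), the functors $S(n) \to D(n)$ attaching a morphism together with a bounding morphism (the generating cofibrations for chain complexes applied to hom-spaces of the two-object category with a single generating arrow), and the trivial cofibration $\ground \to \cat K$ where $\cat K$ is the contractible category with two isomorphic objects. The key observation is that every target and source of these maps \emph{already} lies in $\dgCat'$: none of them has a vanishing identity morphism, since each has at least the nonzero identity on each object, and the freely-walking object has a single object whose identity is the unit of $\ground$, hence nonzero.

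Next I would check that $\dgCat'$ is closed under the colimits used to build cofibrant objects, i.e. that pushouts of generating cofibrations and transfinite compositions stay inside $\dgCat'$. The point is that adjoining objects, generating morphisms, or contracting cylinders via these pushouts never forces an identity to become zero: identities are preserved under the free constructions, and the only dg category in $\dgCat'$ with a vanishing identity is the zero dg category, which we have explicitly adjoined as the terminal object. I would then invoke the standard transfer principle (e.g. the crossed-Kan recognition theorem or Kan's lifting criterion for cofibrantly generated model structures): since $\dgCat'$ is complete and cocomplete (limits and colimits computed as in $\dgCat$, with the zero dg category playing the role of the problematic colimits), and the weak equivalences and fibrations are defined by restriction from $\dgCat$, it suffices to verify the two-out-of-three and retract axioms (inherited trivially) and the two lifting/factorization axioms. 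Because the generators and their cell complexes live in $\dgCat'$, the factorizations produced by the small object argument in $\dgCat$ automatically land in $\dgCat'$.

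The main subtlety, and the step I expect to require genuine care, is the behaviour of the inclusion $\dgCat' \hookrightarrow \dgCat$ with respect to limits, and in particular whether $\dgCat'$ is stable under the relevant cofibrant replacements rather than merely under the generating maps. A dg category in $\dgCat$ with a zero identity is quasi-equivalent to the zero dg category (an object with $\id = 0$ is a zero object, making its component contractible and in fact null), so on the level of homotopy categories $\dgCat'$ and $\dgCat$ agree; the statement in the excerpt that $\dgCat'$ ``is equivalent to $\dgCat$ as an $\infty$-category'' is exactly this observation. Thus I would argue that $\dgCat'$ is a full reflective-up-to-equivalence subcategory whose inclusion is a homotopy equivalence, and that restricting the model structure does not disturb any of the axioms. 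The one thing to watch is that a general limit in $\dgCat$ of objects in $\dgCat'$ could in principle produce a category with a vanishing identity (a fibre product can kill an identity); here one uses that fibrations in the Dwyer-Kan structure are surjective on hom-complexes up to homotopy and full on isomorphisms, so pullbacks along fibrations preserve nonvanishing identities, and any genuinely degenerate limit is identified with the terminal object $*$ that we have built in by hand.
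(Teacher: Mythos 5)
Your overall strategy is the same as the paper's: observe that the sources and targets of the generating (trivial) cofibrations all lie in $\dgCat'$, establish that $\dgCat'$ is complete and cocomplete (with the zero dg category absorbing the degenerate cases), and conclude that the factorizations produced by the small object argument never leave $\dgCat'$, so the model category axioms restrict. Your first two paragraphs are essentially the paper's proof, with more detail spelled out on the transfer mechanism. (One slip: the target of $\emptyset \to \ground$ has endomorphism ring $\ground$, not zero --- as you yourself say two sentences later.)

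However, your third paragraph misdiagnoses where the care is needed, and the analysis there is wrong. A fibre product in $\dgCat$ cannot kill an identity: the identity of an object $(a,b)$ of $\cat A \times_{\cat C} \cat B$ is the pair $(\id_a, \id_b)$, which is nonzero as soon as either coordinate is; the same holds for arbitrary products, and the equalizer of $F,G: \cat C \rightrightarrows \cat D$ is a subcategory of $\cat C$ whose identities are identities of $\cat C$. So $\dgCat'$ is closed under all small limits computed in $\dgCat$ --- no argument about fibrations is needed, and indeed an argument restricted to pullbacks along fibrations could not suffice, since a model category must admit limits of arbitrary diagrams, not only homotopically well-behaved ones. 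The genuine subtlety is on the colimit side, which you pass over quickly: colimits in $\dgCat$ of diagrams in $\dgCat'$ can leave $\dgCat'$, for instance $\cat D \amalg 0$ with $\cat D$ nonzero, or a coequalizer imposing the relation $\id_a = 0$ on one object of a category with several objects. In those cases the colimit in $\dgCat'$ is not computed as in $\dgCat$; it collapses to the zero dg category, because any functor from the $\dgCat$-colimit to a nonzero object of $\dgCat'$ would have to send an object with vanishing identity somewhere, which is impossible. This is exactly the case analysis the paper carries out for coproducts and coequalizers, and it is compatible with your transfer argument because the colimits actually used by the small object argument (pushouts of generating cofibrations and transfinite compositions) are free constructions that never create zero identities.
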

\begin{proof}
	We note first that dg categories with identities equal to zero do not feature in the generating cofibrations and generating trivial cofibrations of $\dgCat$, thus $\mathcal I$ and $\mathcal J$ are contained in $\dgCat'$.
	Thus we define a model structure on $\dgCat'$ whose morphisms are cofibrations, weak equivalences and fibrations exactly if they are cofibrations, weak equivalences and fibrations in $\dgCat$.
	
	We will check that $\dgCat'$ is closed under limits and colimits. 
	Then we can run the small-object argument restricted to the subcategory $\dgCat'$ to prove the existence of factorizations. All other model category axioms are inherited by full subcategories.
	
	We note that the product $\prod_i \cat D_i$ in $\dgCat'$ exists. 
	It is equal to the product of all factors that are not the final object.
	
	For two functor $F, G: \cat C \rightrightarrows \cat D$ in $\dgCat'$ we denote the equalizer in $\dgCat$ by $J: \cat B \to \cat C$. 
	All identities in $\cat B$ are nonzero (as $FJ(\id_B)) = GJ(\id_B)$ for $B \in \cat B$), unless they are zero in $\cat C$, in which case $\cat C$, $\cat D$ and $\cat B$ are zero.
	
	The coproduct $\amalg_i \cat D_i$ of objects in $\dgCat'$ is the same as in $\dgCat$. It is zero if one of the $\cat D_i$ is, and has no zero identities otherwise.
	
	For the coequalizer a similar argument applies.
\end{proof}

It is easy to see that $\Omega: \ptdco^* \rightleftarrows \dgCat': \Ba$ is an adjunction and we will define a model structure on $\ptdco^*$ so that this becomes a Quillen equivalence. 

\begin{lem}\label{lem-spcoapresentable}
	The category $\ptdco^*$ is locally presentable.
\end{lem}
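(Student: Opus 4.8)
The plan is to deduce local presentability from cocompleteness together with accessibility, using the classical fact that a cocomplete accessible category is locally presentable. We have just shown that $\ptdco^*$ is cocomplete, so it remains to exhibit a regular cardinal $\kappa$ and a \emph{set} $\mathcal{G}$ of $\kappa$-presentable objects such that every object of $\ptdco^*$ is a $\kappa$-filtered colimit of objects in $\mathcal{G}$. As generators I would take the (essentially small) collection of finite-dimensional pointed split curved coalgebras, together with the formal terminal object $*$, and take $\kappa=\aleph_0$.

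The technical heart is the statement that every pointed split curved coalgebra $C$ is the filtered union of its finite-dimensional split curved sub-coalgebras. First, by the fundamental theorem of coalgebras \cite{Sweedler68}, every element of $C$ lies in a finite-dimensional graded subcoalgebra $D$, i.e.\ one with $\Delta(D)\subseteq D\otimes D$. The key point is that such a $D$ can be enlarged to a finite-dimensional subcoalgebra closed under both $d$ and $\epsilon$ without leaving the finite-dimensional world. Since $d$ is a coderivation, $D+d(D)$ is again a subcoalgebra, and because $d^2=[h,-]$ is computed from $h$ and $\Delta$, one has $d^2(D)\subseteq D$; hence $d(D+d(D))\subseteq D+d(D)$, so $D+d(D)$ is a finite-dimensional $d$-closed subcoalgebra. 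Enlarging further to $\tilde D=(D+d(D))+\epsilon(D+d(D))$ and using that $\epsilon$ is a coalgebra map compatible with $d$ (with $d|_{C_0}=0$) shows $\tilde D$ is $\epsilon$-stable and still $d$-closed. Its coradical is $\tilde D\cap C_0$, a subcoalgebra of $C_0=\ground[\Ob(C)]$, so $\tilde D$ is pointed and split, the restricted curvature $h|_{\tilde D}$ and differential making it an object of $\ptdco^*$. These finite-dimensional sub-objects form a directed poset whose union is $C$, giving the desired filtered colimit presentation.

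It then remains to check the bookkeeping: the isomorphism classes of finite-dimensional pointed split curved coalgebras form a set, and each such object is finitely presentable in $\ptdco^*$, i.e.\ corepresents a functor commuting with filtered colimits. This follows because filtered colimits of coalgebras are computed on underlying dg vector spaces and a finite-dimensional coalgebra is a finitely presentable dg vector space. The object $*$ must be treated separately: since $\Hom_{\ptdco^*}(*,-)$ is empty on all genuine coalgebras, and a filtered colimit of genuine coalgebras remains genuine, $*$ is finitely presentable as well, and adjoining it does not disturb the generation statement because it is terminal and arises only as the trivial colimit of itself. Combining these points with cocompleteness yields local presentability.

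The main obstacle I anticipate is the simultaneous closure argument producing the finite-dimensional generators: one must ensure that the enlargement $D\mapsto\tilde D$ stays finite-dimensional and lands inside $\ptdco^*$ on the nose—in particular that closing under the coderivation $d$ does not run away, which works precisely because the curvature keeps $d^2$ inside the coalgebra—and that the added terminal object $*$ is correctly incorporated into the accessibility count. One can alternatively organize the whole argument as a direct adaptation of Positselski's proof of local presentability for $\cuco$ in \cite{Positselski11}, since passing to the relative (bicomodule) and pointed setting does not affect the underlying finiteness mechanism.
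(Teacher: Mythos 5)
Your proof is correct and takes essentially the same route as the paper: finite-dimensional split curved subcoalgebras serve as the set of finitely presentable generators, every pointed curved coalgebra is exhibited as the filtered union of such subcoalgebras, and the adjoined terminal object $*$ is checked to be finitely presentable separately. Your explicit closure argument (enlarging $D$ to $D+d(D)$ via $d^2(D)\subseteq D$, then closing under $\epsilon$ so the splitting restricts) is a more careful rendering of the step the paper dispatches briefly, namely that adding the images of the differential to a finite-dimensional subcoalgebra keeps it finite-dimensional and that $\epsilon$ restricts to give compatible splittings.
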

\begin{proof}
	We recall that an object $s$ of a category is finitely presentable if $\Hom(s,-)$ commutes with filtered colimits.
	A category $\cat C$ is finitely presentable (and thus in particular locally presentable) if it is cocomplete and there is a small set of finitely presentable objects $S$ such that every object of $\cat C$ is a filtered colimit of objects in $S$. 
	In particular $S$ is a generating set.
	
	We let $S \subset \ptdco$ be the subcategory of finite-dimensional split coalgebras.
	It is clear that any finite-dimensional split curved coalgebra is finitely presentable.
	
	Next, any coalgebra is the union of its finite-dimensional sub-coalgebras.
	The same applies to dg coalgebras and for curved coalgebras: 
	it is clear for graded coalgebras and the coalgebra obtained by adding the images of all differentials to a finite-dimensional subcoalgebra is still finite-dimensional (using that the action of the curvature term is nilpotent).
	
	We now claim that any pointed curved coalgebra $(C,\epsilon)$ is a colimit of finite-dimensional ones. 
	Indeed, $C$ is the union of its finite-dimensional subcoalgebras $C^i$, and the restriction of $\epsilon$ provides splittings $\epsilon^i: C^i \to C_0\cap C^i = (C^i)_0$ compatible with inclusions.
	Then $(C, \epsilon)$ is the colimit of the $(C^i, \epsilon^i)$.
	Thus $S$ generates $\ptdco$ under filtered colimits.
	
		Thus $\ptdco$ is locally presentable, and the lemma follows as the additional object $*$ is finitely presentable.
\end{proof}

\begin{prop}\label{prop:coalgebramodel}
	The category $\ptdco^*$ admits a left proper combinatorial model structure whose weak equivalences are maps $f$ such that $\Omega(f)$ is a quasi-equivalence and whose  cofibrations are all inclusions together with the map $0 \to *$.
\end{prop}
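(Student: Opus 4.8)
The plan is to construct the model structure by \emph{transfer} along the adjunction $\Omega \colon \ptdco^* \rightleftarrows \dgCat' \colon \Ba$, using the Dwyer-Kan model structure on the right as the one we pull back. Since weak equivalences and fibrations on $\ptdco^*$ are to be \emph{defined} by the condition that $\Omega(f)$ is a weak equivalence (respectively a fibration), the work is to verify that these classes, together with the prescribed cofibrations, actually assemble into a model category. Having established in Lemma~\ref{lem-spcoapresentable} that $\ptdco^*$ is locally presentable, the natural framework is a right-induced (or in this coalgebraic setting, more naturally a left-induced, since $\Omega$ is the left adjoint) model structure, and I would invoke a recognition theorem for cofibrantly generated model structures on locally presentable categories.

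First I would pin down the cofibrations: the statement prescribes them as the inclusions of split curved coalgebras together with $0 \to *$. I would check these are closed under pushout, transfinite composition and retract, and identify a generating set—the natural candidates are the inclusions of finite-dimensional split coalgebras used in the proof of Lemma~\ref{lem-spcoapresentable}, adjoined with $0 \to *$. Because cofibrations here are (essentially) monomorphisms of coalgebras, most of the small-object-argument machinery runs smoothly: coalgebra inclusions are stable under the relevant colimits, and the local presentability from Lemma~\ref{lem-spcoapresentable} supplies the smallness needed to run Quillen's small object argument. Next I would verify the two-out-of-three and retract axioms for weak equivalences; these are immediate since weak equivalences are created by $\Omega$ from quasi-equivalences in $\dgCat'$, which already form the weak equivalences of a model category.

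The main obstacle is the lifting axioms, specifically factorization and the interaction of the $\Omega$-created fibrations with the prescribed cofibrations. The standard strategy, following Positselski's treatment of the conilpotent case in \cite[Sections 9.2, 9.3]{Positselski11}, is to produce factorizations via the small object argument applied to the generating (acyclic) cofibrations, and then to characterize trivial cofibrations and fibrations by lifting. The delicate point is showing that a map which is simultaneously a cofibration and a weak equivalence has the left lifting property against all $\Omega$-fibrations; dually one must check that trivial fibrations are exactly the maps with the right lifting property against generating cofibrations. Here I would lean heavily on the explicit bar-cobar adjunction and the uncurving formalism (Proposition~\ref{prop:uncurving}, Corollary~\ref{cor:uncurvingcoalg}) to translate the coalgebraic lifting problems into dg-algebraic ones where the computations are controlled, exactly as the introduction promises. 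The finalized object $*$ and the zero dg category must be handled by hand throughout, since $\Omega(*)$ is the zero dg category and $0 \to *$ is a cofibration by fiat; checking that this extra cell does not break any axiom is tedious but formal.

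Finally, for \emph{left properness} I would verify that weak equivalences are preserved under pushout along cofibrations. Since cofibrations are coalgebra inclusions and the cotensor product over a cosemisimple coradical is exact (the restriction built into Definition~\ref{def:semialgebrasplit}, as emphasized in the accompanying remark), pushouts of coalgebra monomorphisms are again monomorphisms and behave well under $\Omega$; combined with the fact that $\Omega$ is a left adjoint and hence preserves the relevant colimits, this should give left properness essentially for free. For \emph{combinatoriality} it suffices to exhibit the small generating sets of cofibrations and acyclic cofibrations produced above, together with the local presentability of Lemma~\ref{lem-spcoapresentable}. I expect the genuinely hard technical input to be the acyclic-cofibration/fibration factorization and the verification that $\Omega$-created weak equivalences satisfy the two lifting axioms, which is precisely where the comparison with \cite{Positselski11} and the bicomodule reformulation earn their keep.
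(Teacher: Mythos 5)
Your framework is off in a way that creates real gaps. You propose to ``transfer'' the model structure by letting $\Omega$ create both the weak equivalences and the fibrations. But $\Omega$ is the \emph{left} adjoint: the classical transfer theorem creates fibrations and weak equivalences along a \emph{right} adjoint, and your hedge towards a left-induced structure does not fix this, because a left-induced structure would create \emph{cofibrations} via $\Omega$ --- whereas the proposition prescribes the cofibrations to be the inclusions (plus $0 \to *$), which is a priori a different class from $\Omega^{-1}(\mathrm{cofibrations})$. This mismatch is not cosmetic: your plan to obtain factorizations by running the small object argument on ``the generating acyclic cofibrations'' never produces such a generating set, and in this coalgebraic setting there is no direct way to exhibit one. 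The paper sidesteps exactly this by invoking Smith's recognition theorem for combinatorial model categories (\cite[Proposition A.2.6.15]{Lurie11a}): one specifies $W = \Omega^{-1}(\text{quasi-equivalences})$ and the set $C_0$ of inclusions of finite-dimensional pointed curved coalgebras, and then verifies local presentability (Lemma \ref{lem-spcoapresentable}), that $W$ is perfect/accessible (using that $\Omega$ preserves filtered colimits and that quasi-equivalences form an accessible class in the combinatorial category $\dgCat$), that $W$ is stable under pushout along pushouts of $C_0$, and that any map with the right lifting property against $C_0$ lies in $W$. No generating set of acyclic cofibrations is ever needed.

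The last of Smith's conditions is the most serious omission in your sketch: nothing you wrote explains why a map with RLP against all inclusions must be a weak equivalence. The paper's argument here is an explicit, non-obvious check that the generating cofibrations of $\dgCat$ lie in the image of $\Omega$: for the standard generating cofibrations $i_n\colon S(n) \to D(n)$ one has $i_n = \Omega \Ba(i_n)$ on the nose (because all compositions degenerate), and $\emptyset \to \ground$ is $\Omega$ applied to $0 \to (\ground, \id_\ground)$; hence RLP against $\Omega(C_0)$ forces $\Omega(g)$ to have RLP against the generating cofibrations of $\dgCat$, so it is a quasi-equivalence. Two further points: the stability of $W$ under pushouts (which also yields left properness) rests on $\Omega$ sending inclusions to cofibrations --- proved via the filtration by order of comultiplication --- combined with \emph{left properness of $\dgCat$ over a field}, not with exactness of cotensor products over the coradical as you suggest; and the uncurving formalism of Proposition \ref{prop:uncurving}, on which you propose to ``lean heavily,'' plays no role at all in the paper's proof of this proposition.
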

\begin{proof}
	This is an application of \cite[Proposition A.2.6.15]{Lurie11a}. 
	We let $W$ be the class of maps that $\Omega$ sends to quasi-equivalences,\footnote{We consider $\Omega$ as a functor into $\dgCat$ rather than $\dgCat'$ here as this is more convenient for certain arguments.}
	 and we denote by $C_0$ a set representing all inclusions of finite-dimensional pointed curved coalgebras. 
	The existence of the model structure then follows if we have the following:
	\begin{enumerate}
		\item $\ptdco^*$ is locally presentable.
		\item The class $W$  is stable under filtered colimits.
		\item There is a small set $W_0$ contained in $W$ such that all morphisms in $W$ are filtered colimits of elements in $W_0$.
		\item The class $W$ is stable under pushouts along pushouts of morphisms in $C_0$.
		\item Any morphism with the right lifting property with respect to all morphisms in $C_0$ lies in $W$.
	\end{enumerate}
In the first condition note that Lurie uses \emph{presentable} for what we call locally presentable.

The second and third condition show that $W$ is perfect as it is clear that $W$ contains isomorphisms and satisfies the two-out-of-three property. Thus (1-5) will show $\ptdco$ is a model category.

The category $\ptdco^*$ is finitely presentable by Lemma \ref{lem-spcoapresentable}.

Conditions (2) and (3) are exactly saying that $W \subset {\ptdco}^{[1]}$ is a finitely accessible subcategory, cf.\  \cite[Definition A.2.6.2]{Lurie11a} for the case $\kappa = \aleph_0$.
Here $[1]$ denotes the category with 2 objects and 1 non-identity morphism.

As $\Omega$ preserves filtered colimits so does ${\ptdco^*}^{[1]} \to \dgCat^{[1]}$.
By \cite[Corollary A.2.6.5]{Lurie11a}
it then suffices to show that quasi-equivalences of dg categories are finitely accessible. 
Since $\dgCat$ is combinatorial by \cite{Holstein3} this follows from \cite[Corollary A.2.6.6]{Lurie11a}. 
We need to strengthen the conclusion a little bit, from $W$ being accessible to being finitely accessible. 
Inspecting the proof (as well as the proof of \cite[Proposition A.1.2.5]{Lurie11a}) this follows if the sources and targets of all generating cofibrations and generating acyclic cofibrations are $\aleph_0$-compact, i.e. morphisms out of them commute with filtered colimits. 
This is immediate for the sets $\mathcal I$ and $\mathcal J$ of generating (acyclic) cofibrations for $\dgCat$.

For (4) we first show that $\Omega$ sends inclusions of pointed curved coalgebras to cofibrations. 
Let $C \to D$ be an inclusion in $\ptdco$. We need to show that $\Omega(D)$ is of the form $\colim_{i \geq 0} G^i$ where $G^0 = \Omega(C)$ and $G^{i+1}$ is obtained from $G^i$ by freely adjoining morphisms $f$ with differentials $df \in G^i$.

It is clear that the cobar construction on $D$ corresponds to the free category generated by the elements of $D$. 

Thus we use the filtration induced by the order of comultiplication on $D$, i.e.\ $F^0 = \ground[\Ob(D)]$ and $F^i = \Delta^{-1}(F^{i-1}(D \otimes D))$.
Then $\Omega(C + F^{i+1})$ is obtained from $\Omega(C + F^i)$ by first freely adjoining arrows for all the homogeneous elements of $F^{i+1}$ with differential equal to 0, and then for all other homogeneous elements of $F^{i+1}$.
This shows $\Omega(C) \to \Omega(D)$ is a cofibration in $\dgCat$.

Now let $f: X \to Y$ be a morphism in $C_0$ and $f': X' \to Y'$ its pushout along an arbitrary map $X \to X'$.
We have shown that $\Omega$ sends maps in $C_0$ to cofibrations.
As $\Omega$ is a left adjoint it preserves pushouts and $\Omega(f')$ is a pushout of $\Omega(f)$ and thus also a cofibration.

If now $g: X' \to X'''$ is in $W$ and $g'': Y' \to Y''$ is its pushout along $f'$ we need to to show that $g''$ is also in $W$.
By assumption $\Omega(g)$ is a quasi-equivalence and $\Omega(f')$ is a cofibration. 
Since $\dgCat$ is left proper if $\ground$ is a field, see \cite{Holstein3}, we deduce that $\Omega(g')$ is a quasi-equivalence and $g'$ is in $W$.

Before showing (5) we will prove, using the representation of split curved coalgebras as unions of their finite-dimensional sub-coalgebras that all inclusions in $\ptdco^*$ are obtained by pushout and transfinite composition from the morhphisms in $C_0$
\footnote{We are grateful to L. Positselski for providing this argument to us.} Given an inclusion of split curved coalgebras $X\to Y$, choose an element $y\in Y$ such that $y\notin X$ and a finite dimensional split curved coalgebra $Z$ containing $y$. 
It follows that the inclusion $X \to X+Z$ 
has finite-dimensional cokernel; call such inclusions \emph{good}. It follows by transfinite induction that any inclusion of split curved coalgebras is a transfinite composition of good inclusions. Let now $U\to V$ be a good inclusion. Choosing a finite basis in $V/U$, lifting it arbitrarily to $V$ and choosing a finite dimensional split curved coalgebra containing the lift of every basis element, we find a finite dimensional subcoalgebra $W$ in $V$ such that $U+W=V$. Then the map $U\to V$ is obtained as a pushout of $U\bigcap W\to U$ along the inclusion of finite dimensional split curved coalgebra $U\bigcap W\to W$.  
In particular this will show that the cofibrations in $\ptdco$, generated by $C_0$, are exactly inclusions.

Finally, for (5) it follows by a standard argument going  back to Quillen that if all objects are mapped to cofibrant objects by $\Omega$ and there exist cylinder objects in $\ptdco$, it follows that any map with the RLP with respect to preimages of cofibrations is a weak equivalence. 
The result in the form we use can be found in \cite[Theorem 2.2.1]{Hess17}.

We prove for any split curved coalgebra $C$ the existence of a cylinder object, i.e.\ some factorization $C \amalg C \xrightarrow{i} C \otimes I \xrightarrow{q} C$ of the fold map such that $\Omega(i)$ is a cofibration and $\Omega(q)$ is a quasi-equivalence. We let $\mathcal C$ be the dg category with two objects connected by an isomorphism. It has a natural quasi-equivalent projection to the dg category $\mathcal A$ with one object whose algebra of endomorphisms is $\ground$. There is a natural tensor product in $\ptdco$ that makes $C \otimes \Ba(\mathcal C)$ into a coalgebra. This is a cylinder object for $C$. Indeed, there is a natural inclusion from $C \oplus C$ and the map $C \otimes \Ba(\mathcal C) \to C$ induced by $\Ba(\mathcal C) \to \Ba(\mathcal A) = \ground$ is a weak equivalence. To justify the last claim, we can use the general result that $\Omega$ is quasi-strong monoidal, i.e. $\Omega(C\otimes B(\mathcal C)) \simeq \Omega C \otimes \Omega B (\mathcal C) \simeq \Omega C \otimes \mathcal C \simeq \Omega C$. This is \cite[Lemma 5.1]{HolsteinJ}.
Note that the result we quote is a pure computation which is not dependent on the existence of a model structure on $\ptdco^*$.

One may also show directly, that there is a cylinder object by considering the coalgebra $I$ of chains on the classifying space of $\mathcal C$ and showing there is a homotopy equivalence between $\Omega(C \otimes I)$ and $\Omega(C)$ using the symmetric monoidal structure on $\mathcal C$,
cf.\ the proof of claim 6 in \cite[Proposition 3.3]{booth23}.

This finishes the proof. We note that we could have proved the proposition equally by using the main theorem of \cite{Garner20}, the necessary checks are very similar.
\end{proof}
\begin{rem}\label{rem:hquillen}
	One may define a model category structure on pointed dg coalgebras with the same weak equivalences and cofibrations.
	
	As the inclusion $\iota$ of pointed dg coalgebras into pointed curved coalgebras then preserves cofibrations, the uncurving adjunction $\iota \dashv \Eta$ from Corollary \ref{cor:uncurvingcoalg} is Quillen.
	Note that $\Eta (*)$ is the terminal dg coalgebra $\ground$.
\end{rem}
\begin{rem}
	The weak equivalences between conilpotent curved coalgebras considered in \cite{Positselski11} are generated by \emph{filtered quasi-isomorphisms} with respect to admissible filtrations. 
	A naive generalization to pointed curved coalgebras would lead us to consider admissible filtrations with $F^0(C) = C_0$ (instead of $F^0 = \ground$ as in \cite{Positselski11}).
	Then any filtered quasi-isomorphism would have to preserve the the set of grouplike elements of a pointed curved coalgebra.
	But we know that a quasi-equivalence of dg categories can change the set of objects, thus such maps do not generate all weak equivalences of pointed curved coalgebras.
\end{rem}
\begin{rem}
	We may apply the same reasoning to  the adjunction between conilpotent curved coalgebras (together with a final object) and dg algebras to obtain a model structure on curved conilpotent coalgebras that is Quillen equivalent to the standard model structure on dg algebras.
	
	This differs from the model structure considered in \cite{Positselski11} in that for us a map $C \to *$ is only a cofibration if $C = *$ or $C = 0$.
	Moreover, there are curved coalgebras weakly equivalent to $*$, namely those arising as bar constructions of dg algebras with 0 homology.
\end{rem}
\begin{rem}
	One should note that from a homotopy-theoretic point of view the final objects $*$ and zero are not very important, and neither is the difference between $\dgCat$ and $\dgCat'$.
	
	Recall that we add a final object to curved coalgebras to satisfy the requirements of a model category to have all limits.
	However, this final object is weakly equivalent to an honest split curved coalgebra, just like the zero dg category is quasi-equivalent to a dg category with one object whose identity is a coboundary.
	Thus the $\infty$-categories obtained by localizing at all weak equivalences in $\ptdco^*$ and $\ptdco$ are in fact weakly equivalent.
	In particular the non-finalised category $\ptdco$ has all homotopy limits (or equivalently all $\infty$-categorical limits).
	
	Similarly, $\dgCat$ and $\dgCat'$ give rise to the same $\infty$-category of dg categories.
	
	Thus it is left to the reader's taste if they want to consider the strictly (rather than homotopy) terminal curved coalgebra, and if they want to consider the zero dg category and other dg categories with identities equal to zero.
\end{rem}
\begin{rem}
	Recall that in the model category of conilpotent curved coalgebras the fibrant objects are precisely those whose underlying coalgebras are conilpotent cofree (i.e. whose dual pseudocompact algebras are completed tensor algebras). This follows from a more general result of \cite[Section 9.3, Lemma 2(i)]{Positselski11} describing fibrations in this model category. These fibrant objects are nothing but (unital) $A_\infty$ algebras. Arguing similarly to op.\ cit.\ we can obtain an explicit description of fibrant objects in $\ptdco^*$; these are precisely those whose underlying pointed coalgebras are of the form $T_R(V)$; the cotensor algebra over a pointed cosemisimple coalgebra $R$ of an $R$-bimodule $V$. These objects are otherwise known as (unital) $A_\infty$ categories; these come up in various contexts of homological algebra and geometry, particularly in the study of Fukaya categories \cite{Sei08}.   
\end{rem}

\subsection{Quillen equivalence}\label{sect:quillenequivalence}
\begin{lem}\label{lem-counit}
For any small dg category we have a quasi-equivalence $\Omega \Ba \cat D \simeq \cat D$. 
\end{lem}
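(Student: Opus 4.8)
The plan is to identify the asserted quasi-equivalence with the counit of the adjunction $\Omega \dashv \Ba$ from Proposition~\ref{prop:catadjunction}, evaluated at $\cat D$, and to show this counit $\Omega\Ba\cat D\to\cat D$ is a quasi-equivalence. Since the grouplike elements of $\Ba\cat D$ are exactly the objects of $\cat D$, and $\Omega(\Ba\cat D)$ is by construction a semialgebra over $\ground[\Ob(\Ba\cat D)]=\ground[\Ob(\cat D)]$, the counit is a bijection on objects and is in fact the identity on $\Ob(\cat D)$. Essential surjectivity is therefore automatic, and it suffices to prove that the counit induces a quasi-isomorphism on every hom-complex $\Hom_{\Omega\Ba\cat D}(s,t)\to\Hom_{\cat D}(s,t)$; that is, that the counit is quasi-fully faithful.

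Next I would make these hom-complexes explicit and reduce to the classical case. Writing $R=\ground[\Ob(\cat D)]$ and $V=V_{\cat D}$, the underlying complex of $\Omega\Ba\cat D$ is the relative cobar--bar complex $T_R\big(\big(\bigoplus_{k\ge 1}\bar V[1]^{\boxempty_R k}\big)[-1]\big)$, and its $(s,t)$-component is carved out by the idempotents $e_s,e_t$. Because $R$ is cosemisimple, the cotensor product $\boxempty_R$ is exact (this is precisely the reason for restricting to split semialgebras, see the remark after Definition~\ref{def:semialgebrasplit}), so this component is a genuine direct summand and the relative cobar--bar complex decomposes over pairs of objects exactly as the ordinary cobar--bar complex of a dg algebra decomposes, with $\boxempty_R$ in the role of $\otimes_\ground$. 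Under the equivalences of Theorem~\ref{thm-semibarcobar} the counit corresponds to the universal twisting cochain $\tau\colon\overline{\Ba\cat D}\to\bar V$, namely the degree-one projection onto the length-one part.

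The remaining statement is the relative analogue of the classical fact that the universal twisting cochain $\tau$ is acyclic, equivalently that the cobar--bar resolution is a quasi-isomorphism. I would prove this by the standard two-step argument transported into $R$-bicomodules. First, the one-sided twisted complex $\cat D\boxempty_R^{\tau}\Ba\cat D$ is the relative two-sided bar resolution of $\cat D$; exactness of $\boxempty_R$ guarantees it is a resolution, and the usual extra-degeneracy $s_{-1}$ provides an $R$-linear contracting homotopy, so $\tau$ is acyclic. Second, a filtration/comparison argument (the comparison lemma for twisted complexes) translates acyclicity of $\tau$ into the statement that the counit $\Omega\Ba\cat D\to\cat D$ is a quasi-isomorphism on each hom-complex. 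All signs and identities here are formally identical to the non-relative setting, so I would refer to Section~A of \cite{Anel13} and to Section~6 of \cite{Positselski11} for the detailed verifications.

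The main obstacle is bookkeeping in the \emph{curved, relative} setting rather than any genuinely new homological input. One must verify that the comparison filtration is exhaustive and bounded so that the relevant spectral sequence converges, and one must carry the curvature contribution $m_0$ of Definition~\ref{def:semicobar} through the contracting-homotopy argument; this term is genuinely nonzero here, since composites of non-identity morphisms in $\cat D$ can land in the coradical $R$. This is exactly where the split structure and the uncurving functor of Proposition~\ref{prop:uncurving} earn their keep: using the isomorphism $\Ba_{nr}\cong\Eta\Ba_v$ from Proposition~\ref{prop:reducedunreduced}, the curved computation reduces to the familiar uncurved bar--cobar resolution, where the extra-degeneracy homotopy and the comparison argument apply verbatim.
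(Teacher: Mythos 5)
Your first half coincides with the paper's proof: the paper likewise notes that $\Omega \Ba \cat D \to \cat D$ is bijective on objects, reduces the claim to a quasi-isomorphism of the underlying $\ground[\Ob(\cat D)]$-bicomodules (using that hom-complexes are recovered by cotensoring with $\ground_s$, $\ground_t$, which is exact since $R=\ground[\Ob(\cat D)]$ is cosemisimple), and then runs the classical bar--cobar argument inside $R$-bicomodules. The gap is in how you run that classical argument in the \emph{curved} setting. Your key object $\cat D\boxempty_R^{\tau}\Ba\cat D$ is not a cochain complex: a curved coalgebra is not a comodule over itself in the sense of Definition \ref{def:comodule}. Indeed $d_{\Ba\cat D}^2$ is the two-sided, commutator-type action of the curvature, $c\mapsto h(c^{(1)})c^{(2)}-c^{(1)}h(c^{(2)})$, whereas the MC equation for $\tau$ only cancels the one-sided term $h(c^{(1)})c^{(2)}$ in $(d^\tau)^2$; the leftover term $c^{(1)}h(c^{(2)})$ is nonzero precisely because, as you observe yourself, $h\neq 0$ for a genuine dg category. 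The object admitting a square-zero twisted differential is the two-sided complex $\cat D\boxempty_R^\tau\Ba\cat D\boxempty_R^\tau\cat D$ (you name the two-sided bar resolution but write the one-sided formula), so the contracting-homotopy step does not parse as stated.

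Your proposed repair via uncurving does not close this gap either. The isomorphism $\Ba_{nr}\cong\Eta\Ba_v$ of Proposition \ref{prop:reducedunreduced} identifies bar constructions, but the counit to be analysed is $\Omega\Ba_v\cat D\to\cat D$, and $\Omega(\Eta C)$ is neither isomorphic nor obviously quasi-isomorphic to $\Omega(C)$: it is the cobar of a much larger dg coalgebra, and producing a comparison $\Omega\Ba_{nr}\cat D\simeq\Omega\Ba_v\cat D$ is essentially equivalent to the lemma you are trying to prove. The paper's proof sidesteps both issues with a direct two-filtration comparison: filter $\cat D$ by $R\subset\cat D$, and filter $\Omega\Ba\cat D$ by the total tensor power coming from $\Ba\cat D$. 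The curvature component $m_0$ of the cobar differential (Definition \ref{def:semicobar}) \emph{strictly lowers} this filtration, so it vanishes in the associated graded, where one is left with the classical uncurved computation: the associated graded bicomodules agree in weights $0$ and $1$, and $\operatorname{Gr}^{i}(\Omega\Ba\cat D)$ is acyclic for $i\geq 2$, while $\operatorname{Gr}^{i}(\cat D)=0$. If you wish to keep your route, you must replace step (a) by the two-sided twisted complex and recognize that step (b), your black-boxed ``comparison lemma,'' is itself exactly such a weight-filtration argument --- at which point the paper's direct argument is both shorter and handles the curvature for free.
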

\begin{proof}
	If all hom spaces in $\cat D$ are 0 this follows directly from the definitions. 
	So let us assume $B$ is given by the reduced bar construction.
	
By construction $\Omega \Ba \cat D$ and $\cat D$ have the same set of objects. 
Thus the counit $\Omega \Ba \cat D \to \cat D$ is a quasi-equivalence if for any $A, B \in \Ob(\D)$ we have a quasi-isomorphism $\Hom_{\Omega \Ba \D}(A, B) \simeq \Hom_{\D}(A, B)$.
It suffices to show that the semialgebras $\Omega \Ba \D$ and $\D$ are quasi-isomorphic, as we can recover all morphism spaces as in Proposition \ref{prop:comodule} via $\Hom(A,B)=\ground_{A}\boxempty_{\ground[\Ob(\D)]}V_{\cat D}\boxempty_{\ground[\Ob(D))]}{\ground_{B}}$

We apply the usual argument that the cobar-bar counit is a quasi-isomorphism, just  in the category of $\ground[\Ob(\cat D)]$-bicomodules,  see e.g. \cite[Theorem 6.10]{Positselski11}.

Consider the two-stage filtration on $\cat D$: $$\ground[\Ob(\cat D)] \subset \cat D.$$ The induced filtration on $\Omega \Ba(\D)$ is multiplicative and compatible with the differential. Note that the differential in $\operatorname{gr}\Omega \Ba(\D)$ has no contributions from the bar differential and curvature in $\Ba(\D)$, in other words the semialgebra $\operatorname{gr}\Omega \Ba(\D)$ is just the cobar construction of a pointed bigraded dg coalgebra whose underlying space and coproduct are the same as  $\Ba(\D)$ but the curvature and the bar differential are set to zero.

The counit of the cobar-bar adjunction $\Omega \Ba(\cat D)\to D$ is compatible with these filtrations, and so, it gives a map on the associated spectral sequences. The spectral sequence associated with the two-stage filtration on $\cat D$ is trivial (has zero differentials) whereas the differential $d_0$ of the filtration on $\Omega \Ba(\cat D)$ is just the relative cobar-differential of the coalgebra  $\Ba(\cat D)$ viewed as a coalgebra relative to $\ground[\Ob(\cat D)]$ and disregarding the bar differential and curvature of $\Ba(\cat D)$.
 The latter cobar-construction computes the functor $\operatorname{Ext}^*(\ground[\Ob(\cat D)], \ground[\Ob(\cat D))$ in the category of graded comodules over $\Ba(\cat D)$ (where it is understood that the bar differential and curvature in $\Ba(\cat D)$ is disregarded). Choosing appropriate bases in the morphism spaces of $\cat D$, we identify $\Ba(\cat D)$ with the path coalgebra of a certain quiver (cf. Example \ref{quiver}). Since path coalgebras are hereditary, the cohomology of this cobar-construction is zero in degrees $>1$.  In degrees 0 and 1, on the other hand, it equals $\ground[\Ob(\cat D)]$ and $\bar{D}$ respectively.

It follows that the corresponding spectral sequences are isomorphic from the term $E_1$ onwards which gives the desired statement.
\end{proof}
\begin{theorem}\label{thm:koszulquillen}
The adjunction $\Omega \dashv \Ba$ induces a Quillen equivalence between $\ptdco^*$ and $\dgCat^\prime$ with the model structures from Section \ref{sect:coalgebramodel}.
\end{theorem}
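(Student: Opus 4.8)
The plan is to show that $\Omega \dashv \Ba$ is a Quillen adjunction and then upgrade it to a Quillen equivalence using the counit computation of Lemma \ref{lem-counit}. The first step is essentially built into the construction of the model structure: by Proposition \ref{prop:coalgebramodel} the weak equivalences of $\ptdco^*$ are exactly the maps sent by $\Omega$ to quasi-equivalences, so $\Omega$ preserves (and reflects) weak equivalences tautologically, and the same proof shows that $\Omega$ carries the generating cofibrations $C_0$ to cofibrations of $\dgCat'$. It therefore only remains to see that $\Omega$ preserves the one extra cofibration $0 \to *$. Since $\Omega(0 \to *)$ is the functor $\emptyset \to \cat Z$ into the zero dg category $\cat Z$, and in $\dgCat'$ the only functor out of $\cat Z$ is its identity, a direct check of the left lifting property against trivial fibrations shows $\emptyset \to \cat Z$ is a cofibration. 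As $\Omega$ is a left adjoint it then preserves all cofibrations, and preserving weak equivalences it preserves trivial cofibrations, so $\Omega$ is left Quillen.

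The second step uses two structural observations. Every object of $\ptdco^*$ is cofibrant: the initial object is $0$, the map $0 \to C$ is an inclusion hence a cofibration for every genuine coalgebra $C$, and $0 \to *$ is a cofibration by definition. Dually every object of $\dgCat'$ is fibrant in the Dwyer--Kan structure. Consequently no (co)fibrant replacements are needed and the Quillen equivalence criterion reduces to a single transpose computation. Fix a (necessarily cofibrant) $C \in \ptdco^*$ and a (necessarily fibrant) $\cat D \in \dgCat'$, and let $\phi \colon C \to \Ba\cat D$ correspond under the adjunction to $\psi \colon \Omega C \to \cat D$, so that $\psi = \varepsilon_{\cat D} \circ \Omega(\phi)$ where $\varepsilon$ denotes the counit. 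By Lemma \ref{lem-counit} the counit $\varepsilon_{\cat D} \colon \Omega\Ba\cat D \to \cat D$ is a quasi-equivalence, so two-out-of-three shows that $\Omega(\phi)$ is a quasi-equivalence if and only if $\psi$ is. Since by definition $\phi$ is a weak equivalence in $\ptdco^*$ exactly when $\Omega(\phi)$ is a quasi-equivalence, we conclude that $\phi$ is a weak equivalence if and only if its transpose $\psi$ is a weak equivalence, which is precisely the defining condition for a Quillen equivalence.

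The real content of the argument is borrowed rather than produced here: the hard work lives in Lemma \ref{lem-counit} (the cobar--bar counit is a quasi-equivalence, proved by an associated-graded filtration argument) and in Proposition \ref{prop:coalgebramodel} (existence of the model structure detected by $\Omega$, together with the identification of cofibrations). The only genuinely delicate point remaining is the bookkeeping around the formal terminal object $*$ and the zero dg category $\cat Z$: one must confirm that $\Omega(0\to*)=(\emptyset \to \cat Z)$ is indeed a cofibration of $\dgCat'$, and that Lemma \ref{lem-counit} covers the degenerate instance $\Omega(*)=\cat Z$ in which all hom-complexes vanish. Both are handled by the same observation that in $\dgCat'$ the zero category admits no outgoing functors other than its identity, so these edge cases, rather than any substantial homotopical input, are where I would concentrate the verification.
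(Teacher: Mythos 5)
Your skeleton is the same as the paper's: left Quillen-ness is quoted from the proof of Proposition \ref{prop:coalgebramodel}, and the equivalence is reduced, using the fact that $\Omega$ creates the weak equivalences of $\ptdco^*$, to Lemma \ref{lem-counit} plus two-out-of-three. Whether one then phrases the conclusion via the transpose criterion (your version) or via the unit and counit being weak equivalences (the paper's version) is immaterial.

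However, the step you yourself single out as the only delicate one is wrong. You assert that $\Omega(0 \to *) = (\emptyset \to \cat Z)$ is a cofibration of $\dgCat'$ because ``the only functor out of $\cat Z$ is its identity'' makes the lifting problems against trivial fibrations solvable. That observation cuts exactly the other way. Let $\cat A \in \dgCat'$ be the dg category with a single object whose endomorphism algebra is $A = \ground\langle \epsilon \rangle/(\epsilon^2)$ with $|\epsilon| = -1$ and $d\epsilon = 1$: this algebra is unital with $1 \neq 0$ and acyclic. The unique functor $\cat A \to \cat Z$ is surjective on objects and a surjective quasi-isomorphism on hom complexes, hence has the right lifting property against the generating cofibrations $\emptyset \to \ground$ and $S(n) \to D(n)$; it is therefore a trivial fibration of $\dgCat'$. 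The lifting problem for $\emptyset \to \cat Z$ against $\cat A \to \cat Z$, with $\id_{\cat Z}$ along the bottom, has no solution, precisely because there is no functor $\cat Z \to \cat A$ in $\dgCat'$. So $\cat Z$ is not cofibrant in the cofibrantly generated model structure on $\dgCat'$, and your ``direct check'' fails.

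This is not mere bookkeeping, because it propagates into your second step. The cofibrations produced by Lurie's theorem in Proposition \ref{prop:coalgebramodel} form the weak saturation of the inclusions $C_0$, and a pushout of an inclusion along a map to $*$ only yields $\id_*$; so $0 \to *$ does not lie in that saturation, $*$ is not cofibrant, and your claim that every object of $\ptdco^*$ is cofibrant — on which the transpose argument rests — breaks at exactly this object. The repair is to stop insisting on cofibrancy of $*$ and $\cat Z$: the fact that $\Omega$ sends the $C_0$-generated cofibrations (inclusions) to cofibrations already makes $\Omega$ left Quillen, and in the equivalence criterion one replaces $* = \Ba(\cat Z)$ by a genuine cofibrant replacement such as $\Ba(\cat A)$, whose image under $\Omega$ maps to $\cat Z$ by a quasi-equivalence. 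With that adjustment your argument (like the paper's unit/counit formulation, which never needs $*$ to be cofibrant) goes through; as written, the edge case you flagged is precisely where the proof fails.
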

\begin{proof}
$\Omega$ is left Quillen as it sends generating cofibrations to cofibrations, see the proof of Proposition \ref{prop:coalgebramodel}.

We have shown in Lemma \ref{lem-counit} that the counit is a weak equivalence. Let now $C$ be any pointed curved coalgebra.
Since $\Omega C \to \Omega \Ba \Omega C \to \Omega C$ is the identity by the triangle identity of the adjunction it also follows by 2-out-of-3 that the natural map $\Omega C \to \Omega \Ba \Omega C$ is a quasi-equivalence, and thus the unit $C \to \Ba \Omega C$ is a weak equivalence.
\end{proof}

If we are relaxing our assumption that the ground ring $\ground$ is a field, the naive bar construction is no longer well-behaved.
We still have a version of our main result if we let $\ground$ be a principal ideal domain (the case of interest is of course $\ground = \mathbb Z$) and restrict to the category $\dgCat_{\operatorname{fr}}$ of dg categories such that the underlying graded modules for all hom spaces are free.

The second is not a serious restriction as any dg category may be canonically replaced by a quasi-equivalent \emph{semi-free} dg category, see Lemma B.5 in \cite{Drinfeld04} and the preceding definition.

Similarly we may consider Definition \ref{def:semicoalgebraworking} over $\ground$ and consider
the category $\ptdco^*_{\operatorname{fr}}$ of split curved coalgebras whose underlying graded $\ground$-module is free. 

In this setting we define the functors $B$ and $\Omega$ on $\dgCat_{\operatorname{fr}}$ and $\ptdco^*_{\operatorname{fr}}$
as in Definitions \ref{def:semibar} and \ref{def:semicobar}.

We will consider $\dgCat_{\operatorname{fr}}$ and $\ptdco^*_{\operatorname{fr}}$ as relative categories. To do this we  declare the quasi-equivalences in $\dgCat_{\operatorname{fr}}$ to be weak equivalences, and define a morphism $f: C \to D$ in $\ptdco_{\operatorname{fr}}$ to be a weak equivalence if $\Omega(f)$ is.

\begin{cor}\label{cor:koszulz}
	Let $\ground$ be a principal ideal domain.
	Then with notation as above there is a weak equivalence of relative categories between $(\ptdco^*_{\operatorname{fr}}, \simeq)$ and $(\dgCat_{\operatorname{fr}}, \simeq)$.
\end{cor}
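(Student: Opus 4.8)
The plan is to reduce the statement over a principal ideal domain $\ground$ to the already-established field case by a careful bookkeeping of the bar-cobar adjunction in the presence of freeness hypotheses. The key point is that Corollary \ref{cor:koszulz} only asserts an equivalence of \emph{relative} categories, not a Quillen equivalence, so I do not need to construct model structures over $\ground$; I only need that $\Omega$ and $B$ descend to the freely-generated subcategories, preserve weak equivalences, and induce inverse equivalences of the localized $\infty$-categories. First I would verify that on $\dgCat_{\operatorname{fr}}$ and $\ptdco^*_{\operatorname{fr}}$ the functors $B$ and $\Omega$ of Definitions \ref{def:semibar} and \ref{def:semicobar} are well-defined: the bar construction $T_{R}\bar A[1]$ of a dg category with free hom-modules has underlying graded $\ground$-module a direct sum of tensor products of free modules over $\ground[\Ob]$, hence free, so $B$ lands in $\ptdco^*_{\operatorname{fr}}$; dually $\Omega C = T_{C_0}\bar C[-1]$ is free when $\bar C$ is, so $\Omega$ preserves the freeness hypothesis.

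The heart of the argument is to re-examine which steps of the field-case proof (Theorem \ref{thm:koszulquillen}, via Lemma \ref{lem-counit}) actually used that $\ground$ is a field, and to check that freeness is an adequate substitute. The crucial input is Lemma \ref{lem-counit}, whose proof filters $\cat D$ by $\ground[\Ob(\cat D)] \subset \cat D$, filters $\Omega B\cat D$ by tensor powers of $B\cat D$, and shows that the associated graded agrees in degrees $0$ and $1$ while $\operatorname{Gr}^i(\Omega B\cat D)$ is acyclic for $i \ge 2$. The acyclicity here is an instance of the contractibility of the two-sided bar resolution, which over a field is automatic but over a general ring requires the relevant bicomodules to be $\ground$-projective; this is exactly guaranteed by restricting to the free subcategories, since the associated graded pieces are tensor powers of the free $\ground[\Ob(\cat D)]$-bimodule underlying $\bar{\cat D}$. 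I would therefore redo the filtration argument of Lemma \ref{lem-counit} verbatim, observing that the degreewise splittings producing the contracting homotopy exist precisely because everything in sight is a free $\ground$-module, so that $\operatorname{Gr}^{\ge 2}$ is still acyclic and the counit $\Omega B\cat D \to \cat D$ remains a quasi-equivalence over any PID.

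Once the counit is a weak equivalence, the remaining step is purely formal and mirrors the proof of Theorem \ref{thm:koszulquillen}: by the two-out-of-three property for quasi-equivalences, for any $C \in \ptdco^*_{\operatorname{fr}}$ the map $\Omega C \to \Omega B\Omega C$ is a quasi-equivalence, whence the unit $C \to B\Omega C$ is a weak equivalence in $\ptdco^*_{\operatorname{fr}}$ by our definition of weak equivalences there (namely, that $\Omega$ applied to them is a quasi-equivalence). Thus $\Omega$ and $B$ restrict to functors between the relative categories $(\ptdco^*_{\operatorname{fr}}, \simeq)$ and $(\dgCat_{\operatorname{fr}}, \simeq)$ that preserve weak equivalences, with unit and counit both weak equivalences. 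By the standard criterion that an adjunction whose derived unit and counit are weak equivalences induces an equivalence of localized $\infty$-categories (equivalently, of relative categories up to weak equivalence of relative categories), this yields the claimed equivalence.

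The main obstacle I anticipate is the acyclicity of $\operatorname{Gr}^{\ge 2}(\Omega B\cat D)$ over a PID. Over a field the bar complex is contractible by a dimension-counting or Koszul-sign argument that produces an explicit nullhomotopy; over $\ground$ one must ensure this nullhomotopy is $\ground$-linear and respects the $\ground[\Ob(\cat D)]$-bicomodule structure, which is where the hypothesis that all hom-modules are free (and hence the iterated cotensor products over the cosemisimple, free coalgebra $\ground[\Ob(\cat D)]$ remain flat and split) does the real work. I would take care to confirm that the cotensor product over $\ground[\Ob(\cat D)]$ is still exact in this setting, since the exactness of cotensor products over a cosemisimple coalgebra (invoked throughout the field-case arguments, cf.\ the remark following Definition \ref{def:semialgebrasplit}) is precisely the hypothesis that could fail over a non-field ground ring and must be recovered from freeness.
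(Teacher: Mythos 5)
Your proposal is correct in its conclusions and in its formal skeleton (counit a quasi-equivalence, unit a weak equivalence by two-out-of-three, hence a strict homotopy equivalence of relative categories), but the core of your argument takes a genuinely different route from the paper. The paper does \emph{not} re-run the filtration argument of Lemma \ref{lem-counit} integrally: instead it reduces to the already-established field case by base change. Concretely, for $\cat D \in \dgCat_{\operatorname{fr}}$ freeness implies that $-\otimes_{\ground}F$ commutes with the bar and cobar constructions for every field $F$; hence the counit $\eta_{\cat D}\colon \Omega B\cat D \to \cat D$ becomes a quasi-isomorphism after tensoring with every field, and a map of complexes of free modules over a PID that is a quasi-isomorphism after tensoring with every field is itself a quasi-isomorphism (this detection lemma, \cite[Lemma 3.6]{HolsteinH}, following \cite[Proposition 3.7]{HolsteinH}, is exactly where the PID hypothesis enters). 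The trade-off between the two routes is real: the paper's argument is short, uses Theorem \ref{thm:koszulquillen} as a black box, and isolates the arithmetic input in a universal-coefficients-style lemma; your argument instead requires re-verifying Positselski's filtration and contracting-homotopy computations (including the curvature terms, cf.\ Lemmas \ref{lem:cobarsquarezero} and \ref{lem-counit}) over $\ground$, which is more work and is precisely where you correctly locate the danger points — exactness of cotensor products over $\ground[\Ob(\cat D)]$ (which for a coalgebra of the form $\ground[S]$ reduces to flatness of the graded components over $\ground$) and $\ground$-linearity of the nullhomotopy on the associated graded. In exchange, your route barely uses that $\ground$ is a PID: it needs only freeness of the underlying modules and splitness of the unit inclusion $\ground[\Ob(\cat D)] \hookrightarrow \cat D$, so, carried out carefully, it would prove a statement over more general ground rings, whereas the paper's detection-by-residue-fields argument is genuinely tied to principal ideal domains.
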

\begin{proof}
	This follows as in \cite[Proposition 3.7]{HolsteinH}:
	Let $\cat D$ be an object of $\dgCat_{\operatorname{fr}}$.
	Then tensoring over $\ground$ with any field commutes past the bar and cobar constructions.
	Thus the natural morphism $\eta_{\cat D}:\Omega \Ba \cat D \to \cat D$ becomes a quasi-isomorphism (of bicomodules) after tensoring with any field, thus $\eta_{\cat D}$ must have been a quasi-isomorphism, and thus a quasi-equivalence of dg categories, by \cite[Lemma 3.6]{HolsteinH}.
	
	As in the proof of Theorem \ref{thm:koszulquillen} it follows that $C \simeq \Ba \Omega C$ and together this gives a strict homotopy equivalence of relative categories, and thus a weak equivalence.
\end{proof}

\subsection{Semimodule-comodule level Koszul duality}\label{sect:comodulekoszul}
 The bar-cobar adjunction \ref{thm-semibarcobar} gives rise to an equivalence between the corresponding derived and coderived categories as in the case of ordinary dg Koszul duality, cf. \cite[Theorem 6.3, 6.4]{Positselski11}. We now formulate a generalization of this result with dg algebras replaced by dg categories and conilpotent curved coalgebras replaced by pointed curved coalgebras. The treatment of \cite[Chapter 6]{Positselski11}, carries through with fairly obvious modifications; namely the tensor product over $\ground$ needs to be consistently replaced with the cotensor product over a suitable  cosemisimple coalgebra. 

Here we only consider semialgebras of the form $(A,R)$ where $R$ is a cosemisimple coalgebra. Recall, first of all, that the category of semimodules over a such a semialgebra is a model category where weak equivalences are quasi-isomorphisms and fibrations are surjective maps \cite[Theorem 9.2]{Posit10}. Note that the result in op.\ cit.\ was formulated in greater generality, particularly not requiring that $R$ be semisimple, and with this simplification, Positselski's notion of a weak equivalence reduces to a quasi-isomorphism. Similarly the category of comodules over a (not necessarily conilpotent) coalgebra is a model category where weak equivalences are maps with a coacyclic cone and fibrations are injective maps, \cite[Theorem 8.2]{Positselski11}.

Let $C$ be a split curved coalgebra  and $(A,C_0)$ a split semialgebra. 
Assume that the curved convolution algebra $\Hom_{C_0}(\bar{C}, A)$ possesses an MC element $\tau$.

Recall the categories of $C$-comodules from Definition \ref{def:comodule} and $(A,C_0)$-semimodules from Definition \ref{def:semimodule}.
We construct a functor associating to a $C$-comodule $N$ a dg $A$-semimodule $A\boxempty_{R}^\tau N$ as follows. 
The underlying $A$-semimodule of $A\boxempty_{R}^\tau N$  is $A\boxempty_{R}N$ with $A$ acting freely on the left, whereas the  differential $d^{\tau}$ is given by the formula
\[
d^\tau(x\boxempty n)=d(x\boxempty n)-x\boxempty(\tau\otimes 1)\Delta(n)
\]
where $x\in A, n\in M, \Delta:N\to C\otimes N$ is the coaction on $N$ and $d$ stands for the ordinary differential on $N$ induced by the differential on  $N$. Thus, $A\boxempty_{R}^\tau N$  is given by cotensoring $M$ with $A$ and twisting the differential by the MC element $\tau$.

Similarly we construct a functor associating to an $(A,C_0)$-semimodule $M$ a $C$-comodule $C\boxempty_{C_0}^\tau M$ as follows. Disregarding the differential, it is $C\boxempty_{C_0} M$ with $C$ coacting cofreely on the left, whereas the differential $d^{\tau}$ is given by the formula

\[
d^\tau(c\boxempty m)=d(c\boxempty m)+(1\otimes\tau)\Delta(c)\boxempty m
\]
where $c\in C, m\in M, \Delta:C\to C\otimes C$ is the diagonal on $C$ and $d$ stands for the ordinary differential on $C\boxempty_{C_0}M$ induced by the differentials on $C$ and $M$. Thus, $C\boxempty_{C_0}^\tau M$ is given by cotensoring $M$ with $C$ and twisting the differential by $\tau$.

Clearly, $N\mapsto A\boxempty_{C_0}^\tau N$ is a dg functor $C\Comod\to (A,C_0)\Mod$ and $M\mapsto C\boxempty_{C_0}^\tau M$ is a dg functor $(A,C_0)\Mod\to C\Comod$.
\begin{prop}\label{prop:fgadjoint}
The functor $A\boxempty_{C_0}^\tau -$ is left adjoint to $C\boxempty_{C_0}^\tau -$.	
\end{prop}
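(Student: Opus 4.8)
The plan is to establish the adjunction by exhibiting a natural isomorphism
\[
\Hom_{(A,C_0)\text{-}\mathrm{Mod}}\!\bigl(A\boxempty_{C_0}^\tau N,\, M\bigr)
\;\cong\;
\Hom_{C\text{-}\mathrm{Comod}}\!\bigl(N,\, C\boxempty_{C_0}^\tau M\bigr)
\]
for a $C$-comodule $N$ and an $(A,C_0)$-semimodule $M$. First I would treat the underlying graded objects, ignoring the twisted differentials entirely. On the left, $A\boxempty_{C_0}^\tau N$ has underlying semimodule $A\boxempty_{C_0}N$ with $A$ acting freely; since this is a free (induced) $(A,C_0)$-semimodule on the $C_0$-comodule $N$, morphisms out of it into $M$ are determined by $C_0$-comodule maps $N\to M$, by the standard free–forgetful adjunction between $(A,C_0)$-semimodules and $C_0$-comodules. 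On the right, $C\boxempty_{C_0}^\tau M$ has underlying comodule $C\boxempty_{C_0}M$ with $C$ coacting cofreely; this is the cofree (coinduced) $C$-comodule on the $C_0$-comodule $M$, so morphisms into it from $N$ are likewise determined by $C_0$-comodule maps $N\to M$. These two computations give, on the level of graded spaces, a canonical bijection matching both sides with $\Hom_{C_0}(N,M)$, the $C_0$-comodule homomorphisms. This is the conceptual heart of the argument and closely parallels \cite[Theorem 6.3, 6.4]{Positselski11}, with $\boxempty_{C_0}$ everywhere replacing the tensor product over $\ground$.

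Next I would verify that this bijection is compatible with the twisted differentials, so that it restricts to an isomorphism of the actual chain-level (or cycle-level) morphism complexes. The underlying adjunction isomorphism is manifestly natural and compatible with the untwisted differentials $d$; the only point requiring care is the twisting term involving $\tau$. Concretely, one traces a map $\phi\colon N\to M$ through both constructions and checks that the twist $-x\boxempty(\tau\otimes 1)\Delta(n)$ appearing in $d^\tau$ on $A\boxempty_{C_0}^\tau N$ corresponds exactly, under the adjunction, to the twist $(1\otimes\tau)\Delta(c)\boxempty m$ appearing in $d^\tau$ on $C\boxempty_{C_0}^\tau M$. Both twists are built from the same Maurer–Cartan element $\tau\in\Hom_{C_0}(\bar C,A)$ via the coaction $\Delta$, so the matching is a formal consequence of coassociativity and the definition of the convolution product; the signs will have to be reconciled, and the Maurer–Cartan condition $d\tau+\tau*\tau+h=0$ will guarantee that the twisted differentials square correctly and that the correspondence respects them.

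The main obstacle I anticipate is the bookkeeping of the twist term and its signs in the relative (bicomodule) setting: one must be careful that $\tau$, which a priori lives on $\bar C$, interacts correctly with the cotensor products over $C_0$ and with the free/cofree structure maps, and that the Koszul signs introduced by the shift conventions line up on the two sides. This is exactly the kind of computation the paper flags as ``relatively straightforward'' but ``cumbersome,'' and I would expect to lean on naturality to reduce everything to checking the twist on generators. Once the graded adjunction and the differential-compatibility are in hand, naturality in both variables is inherited from the naturality of the free–cofree adjunctions, and the proof concludes.
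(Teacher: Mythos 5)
Your proposal is correct and takes essentially the same route as the paper: the paper's (one-line) proof simply notes that both dg morphism spaces $\Hom_{A\Mod}(A\boxempty_{C_0}^\tau N,M)$ and $\Hom_{C\Comod}(N,C\boxempty_{C_0}^\tau M)$ are naturally isomorphic to $\Hom_{C_0}(N,M)$ with the differential twisted by $\tau$, which is precisely your identification via the free/cofree (induction/coinduction) adjunctions followed by matching the twist terms. Your write-up just spells out the details the paper leaves implicit.
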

\begin{proof}	One simply has to note that the dg spaces of morphisms $\Hom_{A\Mod}(A\boxempty_{C_0}^\tau N,M)$ and $\Hom_{C\Comod}(N,C\boxempty_{C_0}^\tau M)$ are naturally isomorphic to $\Hom_{C_0}(N,M)$ with the differential twisted by the MC element $\tau$.
\end{proof}	
We apply these constructions to two situations:
\begin{enumerate}
	\item Given a split semialgebra $(A,C_0)$, the split curved coalgebra $C$ is the bar-construction of $A$, $C=\Ba(A)$, so that 
	the MC element $\tau\in\Hom_{C_0}(\bar{C},A)$ is the one corresponding to the identity map $\Ba(A)\to \Ba(A)$ via the adjunction in Theorem \ref{thm-semibarcobar}.	
	
\item 	Given a split curved coalgebra $C$, the semialgebra $(A,C_0)$ is its cobar-construction, $A=\Omega C$ and the MC element $\tau\in\Hom_{C_0}(\bar{C},\Omega C)$ is the one corresponding to the identity map $\Omega C\to \Omega C$ via the adjunction \ref{thm-semibarcobar}.
\end{enumerate}	
Then the following result holds.
\begin{theorem}\label{thm:modulecomodule}\
	\begin{enumerate}
		\item 
	Let $(A,C_0)$ be a split semialgebra. Then the functors 
	\[
	A\boxempty_{C_0}^\tau -:\Ba(A)\Comod\rightleftarrows A\Mod:\Ba A\boxempty^\tau_{C_0}-
	\]
	form  a Quillen equivalence.
	\item Let $C$ be a split curved  coalgebra.
	Then the functors
	\[
	\Omega C\boxempty_{C_0}^\tau-:C\Comod\rightleftarrows \Omega(C)\Mod:C\boxempty_{C_0}^\tau-
	\]
	form a Quillen equivalence.
	\end{enumerate}
\end{theorem}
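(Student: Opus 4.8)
My plan is to treat parts (1) and (2) uniformly as instances of the adjunction $A\boxempty_{C_0}^\tau- \dashv C\boxempty_{C_0}^\tau-$ from Proposition \ref{prop:fgadjoint}, and to follow the argument for ordinary Koszul duality in \cite[Chapter 6]{Positselski11}, replacing the tensor product over $\ground$ throughout by the cotensor product $\boxempty_{C_0}$. The single structural input that makes this substitution harmless is that $C_0$ is cosemisimple, so that $\boxempty_{C_0}$ is exact; every homological computation in \cite{Positselski11} that relies on exactness of $\otimes_\ground$ then transports with only notational changes. Throughout I abbreviate the two twisted functors by $F=A\boxempty_{C_0}^\tau-$ and $G=C\boxempty_{C_0}^\tau-$.

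First I would check that $F\dashv G$ is a Quillen adjunction by showing that $F$ is left Quillen. Recall that on semimodules the fibrations are the surjections and the weak equivalences the quasi-isomorphisms, while on comodules the cofibrations are the injections and the weak equivalences the maps with coacyclic cone. Since $F$ is an exact, coproduct-preserving dg functor that produces \emph{free} left $A$-semimodules, it sends an injection $N'\hookrightarrow N$ of comodules to an injection of free semimodules with free cokernel $A\boxempty_{C_0}(N/N')$; such a degreewise-split inclusion is a cofibration of semimodules, so $F$ preserves cofibrations. For trivial cofibrations it suffices to show that $F$ takes coacyclic comodules to contractible semimodules: a coacyclic object is assembled from totalizations of short exact sequences under infinite coproducts, and the exact, coproduct-preserving $F$ converts these into totalizations of short exact sequences of semimodules, which are contractible. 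Hence $F$ carries weak equivalences to quasi-isomorphisms and trivial cofibrations to trivial cofibrations.

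It then remains to prove that the derived unit and counit are weak equivalences. Because every comodule is cofibrant and every semimodule is fibrant, no $(\mathrm{co})$fibrant replacement is needed and the derived transformations coincide with the underlying ones. The counit $FGM\to M$ has underlying map the twisted cotensor $A\boxempty_{C_0}C\boxempty_{C_0}M\to M$, which I would prove is a quasi-isomorphism by the bar-resolution filtration of \cite[Theorem 6.10]{Positselski11} run inside $C_0$-bicomodules: filtering by tensor power of the bar, resp.\ cobar, construction identifies the associated graded with an acyclic twisted complex, the graded pieces being computed using exactness of $\boxempty_{C_0}$, in the same style as Lemma \ref{lem-counit} (cf.\ also Section A.3 of \cite{Anel13}). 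Dually, the unit $N\to GFN$ must be shown to have coacyclic cone. The two situations differ only in whether $A$ or $C$ is the given datum and which is its bar, resp.\ cobar, construction; consequently the roles of unit and counit are interchanged between (1) and (2), but the filtration arguments are the same.

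The principal obstacle is the coacyclicity half of the previous step. In the coderived category a quasi-isomorphism is strictly weaker than a weak equivalence, so it does not suffice to show that the cone of the unit is acyclic; one must exhibit it as coacyclic. The technical heart is therefore to arrange the bar filtration on $N\to GFN$ so that the twisted differential $d^\tau$ respects it and the associated graded cone is visibly a totalization of short exact sequences of $C$-comodules, and then to invoke the closure of coacyclic complexes under such filtered totalizations. Once this is secured, exactness of $\boxempty_{C_0}$ guarantees that each remaining step of the arguments of \cite[Chapter 6]{Positselski11} applies verbatim, establishing both Quillen equivalences.
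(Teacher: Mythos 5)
Your proposal is correct and takes essentially the same route as the paper: the paper's entire proof is the observation that the arguments of \cite[Theorems 6.3, 6.4]{Positselski11} carry over unchanged once $\otimes_{\ground}$ is replaced by $\boxempty_{C_0}$ (cosemisimplicity of $C_0$, hence exactness of the cotensor product, being the one structural input), together with a short remark upgrading the resulting preservation of weak equivalences and homotopy-category equivalences to a Quillen equivalence. The additional detail you supply --- verifying the Quillen adjunction on the left adjoint and reducing the equivalence to the unit and counit, with coacyclicity of the cone of the unit (rather than mere acyclicity) as the technical heart --- is precisely the content of Positselski's cited arguments that the paper leaves to the reference.
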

\begin{proof}
	
	The arguments of \cite[Theorems 6.3, 6.4]{Positselski11} carry over to our situation without any changes (other than replacing the ground field by $C_0$)to show that in  both cases (1) and (2) both adjoint functorsu
	\begin{itemize}\item
	 preserve weak equivalences (and so descend to the homotopy categories) and
	\item determine mutual equivalences of homotopy categories.
	\end{itemize}
Note that, also in both cases, the right adjoint functor  preserves monomorphisms and so, in particular, takes cofibrations to cofibrations. Together these facts imply that, in both cases, the given adjunction forms a Quillen equivalence of model categories.
\end{proof}
\begin{example}\label{eg:representables}
Let $\cat D$ be a dg category and $B\cat D$ be its bar construction.
For any object $X$ in $\cat D$ we consider the 1-dimensional $B\cat D$-comodule $\ground_X$ generated by the grouplike element in $B\cat D$ corresponding to $X$.
Unravelling definitions, we see that $\cat D \boxempty^\tau_{\ground[\Ob(\cat D)]} \ground_X$ is the functor (co)represented by $X$.
Thus the image under the dual Yoneda embedding of $\cat D^{\op}$ in $\cat D\Mod$ is identified with the dg category of 1-dimensional comodules in $B\cat D\Comod$ (equivalently the Yoneda image of $\cat D$ is identified with dg category of 1-dimensional \emph{right} comodules in $B\cat D$-comodules). 
\end{example}	
\begin{cor} 
	If two split curved coalgebras are weakly equivalent, then their coderived categories are equivalent.
\end{cor}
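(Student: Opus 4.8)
The plan is to deduce this corollary directly from the semimodule--comodule Koszul duality established in Theorem~\ref{thm:modulecomodule}(2), together with the Quillen equivalence of Theorem~\ref{thm:koszulquillen}. The key observation is that for a split curved coalgebra $C$, part (2) of Theorem~\ref{thm:modulecomodule} already identifies the homotopy category of $C\Comod$ (the coderived category of $C$) with the homotopy category of $\Omega(C)\Mod$ (the derived category of the dg category $\Omega C$). So the coderived category is, up to equivalence, an invariant manufactured out of $\Omega C$, and it suffices to show that $\Omega C$ is well-behaved under weak equivalences of coalgebras.

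First I would recall that a weak equivalence $f\colon C\to C'$ in $\ptdco^*$ is, by the very definition of the model structure in Proposition~\ref{prop:coalgebramodel}, a map such that $\Omega(f)\colon \Omega C\to \Omega C'$ is a quasi-equivalence of dg categories. Next I would invoke the standard fact that a quasi-equivalence of dg categories induces an equivalence of their derived categories of modules; that is, $\Omega(f)$ induces an equivalence between the homotopy categories of $\Omega C\Mod$ and $\Omega C'\Mod$ (this is the module-level analogue of the invariance of $D(\cat D)$ under quasi-equivalence, and can be obtained from the Quillen adjunction of module categories along $\Omega(f)$). Combining this with the two instances of Theorem~\ref{thm:modulecomodule}(2) applied to $C$ and to $C'$ yields the chain of equivalences
\[
\mathrm{Ho}(C\Comod)\;\simeq\;\mathrm{Ho}(\Omega C\Mod)\;\simeq\;\mathrm{Ho}(\Omega C'\Mod)\;\simeq\;\mathrm{Ho}(C'\Comod),
\]
which is exactly the assertion that the coderived categories of $C$ and $C'$ are equivalent.

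The only genuine content beyond formal bookkeeping is the middle equivalence, namely that a quasi-equivalence $\Omega(f)$ of dg categories induces an equivalence of derived module categories. I expect this to be the main obstacle, though a mild one: one must check that restriction and extension of scalars along $\Omega(f)$ form a Quillen equivalence between $\Omega C\Mod$ and $\Omega C'\Mod$ with the projective model structure, which is the categorical generalization of Morita-invariance of the derived category under quasi-isomorphism of dg algebras. Since weak equivalences in $\ptdco^*$ were \emph{defined} via $\Omega$, no further compatibility with the $*$ and zero objects needs checking here, and the statement follows immediately once this invariance is in hand.
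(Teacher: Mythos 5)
Your proof is correct and follows essentially the same route as the paper: both arguments reduce the statement to Theorem \ref{thm:modulecomodule}(2) applied to each coalgebra, combined with the fact that the quasi-equivalence $\Omega(f)$ induces an equivalence of derived categories of modules (a Morita-invariance step that the paper, like you, invokes without proof). The only difference is packaging: the paper exhibits the equivalence concretely via the corestriction/extension adjoint pair $E_f$, $R_f$ between the comodule categories and a commutative diagram relating them to the Koszul duality functors, so that the equivalence is canonically induced by $f$ itself, whereas you obtain it as an abstract composite of three equivalences --- which is all the stated corollary requires.
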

\begin{proof}
Let $f:C\to D$ be a weak equivalence between split curved coalgebras $C$ and $D$. Recall from \cite[Section 4.8]{Positselski11} that $f$ induces a pair of adjoint functors $E_f:C\Comod\leftrightarrows D\Comod:R_f$. Here for a $D$-comodule $M$ we have $E_f(M)=C\boxempty_DM$ and $R_f$ is the restriction of scalars from $C$ to $D$. Consider the following diagram of dg categories and functors.
\begin{equation}\label{eq:restrictioncorestriction}
\xymatrix@=42pt{
	C\Comod\ar@<-0.5ex>[d]_{\Omega C\boxempty^\tau_{C_0}-}
	\ar@<-0.5ex>[r]_{E_f}
	&D\Comod\ar@<-0.5ex>[l]_{R_f}
	\ar@<-0.5ex>[d]_{\Omega D\boxempty^\tau_{D_0}-}\\
	(\Omega(C), C_0)\Mod
\ar@<-0.5ex>_{C\boxempty_{C_0}^\tau-}[u]
\ar@<-0.5ex>_{\Omega(E_f)}[r]
&(\Omega(D),D_0)\Mod
\ar@<-0.5ex>_{\Omega(R_f)}[l]
\ar@<-0.5ex>_{D\boxempty_{D_0}^\tau-}[u]
}
\end{equation}
Then straightforward inspection shows that it commutative in the sense that there exist natural isomorphisms $E_f\circ \Omega D\boxempty^\tau_{D_0}-\cong \Omega(E_f)\circ\Omega C\boxempty^\tau_{C_0}-$ and $C\boxempty_{C_0}^\tau-\circ\Omega(R_f)\cong R_f\circ D\boxempty_{D_0}^\tau-$.

Since $f$ is a weak equivalence, the functors $\Omega(E_f)$ and $\Omega(R_f)$ induce an adjoint equivalence between the derived categories
$D(\Omega(C))$ and $D(\Omega(D))$. It follows from (\ref{eq:restrictioncorestriction}) and Theorem \ref{thm:modulecomodule} that $E_f$ and $R_f$ induce an adjoint equivalences between $D^{\co}(C)$ and $D^{\co}(D)$ as claimed.
\end{proof}	
We will also need Theorem \ref{thm:modulecomodule} as a statement about $\infty$-categories. 
\begin{defi}\label{defi:derivedinfinitysecond}
	The \emph{coderived $\infty$-category} of a coalgebra $C$, written $\Dco(C)$ is the quasicategory obtained by localizing the category of $C$-comodules at all maps with coacyclic cone.
	
	Similarly we write $\mathscr D(A)$ for the derived $\infty$-category, obtained by localizing the category of $A$-semimodules at all quasi-isomorphisms.
\end{defi}
In particular if $\mathcal D$ is a dg category then $\mathscr D(\cat D)$ stands for the $\infty$-category of functors into $\Ch$, localized at object-wise quasi-isomorphisms.

Then we have $\Dco(C) \simeq \mathscr D(\Omega(C))$ and  $\Dco(\Ba(A)) \simeq \mathscr D(A)$ in the setting of Theorem \ref{thm:modulecomodule}.

\begin{rem} 
It may be interesting to consider an analogue of \emph{nonconilpotent} Koszul duality, cf. \cite[Section 6.7]{Positselski11} where the starting point is a curved relative coalgebra  $(C,R)$ where $R$ is a not necessarily the coradical of $C$. In this situation there is still an adjoint pair $\Omega(C)\Mod\rightleftarrows C\Comod$ but it is not a Quillen equivalence, in general (with the standard model structure on $\Omega C\Mod$). 
Indeed, already for $R=\ground$ one has to consider exotic weak equivalences (of second kind) on the side of dg-modules. We will not treat this case.
\end{rem}

\section{The dg nerve and its adjoint}\label{sect:dgnerve}
In this section we will revisit the construction of the dg nerve $\Ndg(\C)$ of a dg category $\C$ from \cite{Lurie11} and explicitly describe its left adjoint, using our bar-cobar adjunction between dg categories and pointed curved coalgebras. 

To work in maximal generality we allow $\ground = \mathbb Z$ in this section, but we will mention some results specific to the case when $\ground$ is a field.

We first recall Definition 1.3.1.6 from \cite{Lurie11}. 
\begin{defi}\label{def:lurienerve}
	Given a dg category $\D$ we define its \emph{differential graded nerve} $\Ndg(\D)$ as a simplicial set as follows.
	For all $n \geq 0$ we let $\Ndg(\D)_{n} = \Hom_{\sSet}(\Delta^{n}, \Ndg(\D))$ be the set of ordered pair $(\{X_{i}\}_{0 \leq i \leq n}, \{f_{I}\})$ where:
	\begin{enumerate}
		\item For $0 \leq i \leq n$, $X_{i}$ is an object of $\D$.
		\item For every subset $I \subset \{i_{-} < i_{m} < \dots < i_{1} < i_{+}\} \subset [n]$ with $m \geq 0$, $f_{I} \in \Hom^{m}_{\D}(X_{i_{-}}, X_{i_{+}})$ satisfying the equation
		\[
		df_{I} = 
		\sum_{1 \leq j \leq m} (-1)^{j}\left( f_{I \setminus \{i_{j}\}} - 
		f_{\{i_{j} < \dots < i_{1} < i_{+}\}} \circ f_{\{i_{-} < i_{m} < \dots < i_{j}\}}
		\right)
		\]
	\end{enumerate}
	If $\alpha: [m] \to [n]$ is a non-decreasing function then the induced map $\Ndg(\D)_{n}\to \Ndg(\D)_{m}$ is given by 
	\[
	(\{X_{i}\}_{0 \leq i \leq n}, \{f_{I}\}) \mapsto (\{X_{\alpha(i)}\}_{0 \leq j \leq m}, \{g_{J}\})
	\]
	where for any $J \subset [m]$ we define
	$g_{J} = f_{\alpha(J)}$ if $\alpha|_{J}$ is injective, $g_{J} = \id_{X_{i}}$ if $J = \{j, j'\}$ and $\alpha(j)= \alpha(j')=i$ and $g_{J}=0$ otherwise.
	
\end{defi}
\begin{rem}
	Note that in \cite{Lurie11} it was shown that $\Ndg$ is right Quillen, but the left adjoint $L$ was not constructed explicitly. Explicit combinatorial constructions of the adjoint were given in \cite{Rivera19,Zeinalia16} and it was proved in \cite[Proposition 7.1]{Zeinalia16} that in 
	the one-object case it is given as the cobar-construction of the normalized chain coalgebra as in Proposition \ref{prop:dgagnerve} below.
\end{rem}

\subsection{The dg nerve of an algebra}
As a warm-up, we express the dg nerve of a dg algebra in terms of its bar construction.
We will find that it is given by a simplicial Maurer-Cartan set.
Let $\ground$ be a field for now.

We introduce the cosimplicial coalgebra $n \mapsto C_{*}(\Delta^{n},  \ground)$ given by the normalized chains on the standard $n$-simplex.
The coalgebras $C_{*}(\Delta^{n})$ are not conilpotent, but we may form a conilpotent quotient by identifying all the grouplike elements. More generally, given an arbitrary dg coalgebra $C$, form a noncounital coalgebra $C/C_0$ by quotienting out its coradical $C_0$, and add a one-dimensional space spanned by a group-like element (to make it counital). The resulting coalgebra $Q(C)$ will be conilpotent. The functor $Q$ is easily seen to be left adjoint to the inclusion from $\dgco$ into all dg coalgebras. 
We write $\Hom(\Delta, -)$ for the functor $\Hom_{\cuco}(QC_{*}(\Delta^{\bullet}), -)$ whose faces and degeneracies are induced by the corresponding maps on simplices. 

\begin{lem}\label{lem-chainsquillen}
	There is a Quillen adjunction $C_{*}: \qCat^{0} \rightleftarrows \cuco: \Hom(\Delta, -)$. 
\end{lem}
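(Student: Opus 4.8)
The plan is to establish the adjunction at the level of underlying functors first and then verify it is Quillen by checking that $C_*$ preserves cofibrations and trivial cofibrations. Since $\qCat^0$ is the category of reduced simplicial sets with the Joyal model structure, it suffices by standard theory to verify the Quillen property on the generating (trivial) cofibrations, so the bulk of the argument is combinatorial rather than abstract.

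First I would record the adjunction itself. The functor $C_*$ is defined by left Kan extension of the cosimplicial object $n \mapsto QC_*(\Delta^n)$ along the Yoneda embedding, so on a reduced simplicial set $K$ we set $C_*(K) = \colim_{\Delta^n \to K} QC_*(\Delta^n)$, a colimit of conilpotent curved (in fact honest) coalgebras taken in $\cuco$. The right adjoint is then forced to be $\Hom(\Delta, -) = \Hom_{\cuco}(QC_*(\Delta^\bullet), -)$ by the universal property of left Kan extensions, giving the desired adjunction on underlying categories. I would note that $\qCat^0$ is generated under colimits by the simplices $\Delta^n$, and that reducedness is used to land in $\cuco$ (all the grouplike elements of $C_*(\Delta^n)$ are identified after applying $Q$, matching the single vertex of a reduced simplicial set).

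Next I would verify the Quillen conditions. The Joyal model structure on reduced simplicial sets is cofibrantly generated, with cofibrations the monomorphisms and a known generating set of trivial cofibrations (the inner horn inclusions together with the maps encoding the invertibility of equivalences). Since $C_*$ is a left adjoint it preserves colimits, so it is enough to show $C_*$ sends each generating cofibration to a cofibration in $\cuco$ and each generating trivial cofibration to a weak equivalence. For cofibrations, a monomorphism of reduced simplicial sets should be sent to an inclusion of curved coalgebras, which is a cofibration by the description of cofibrations in Proposition \ref{prop:coalgebramodel} specialized to the conilpotent case. For the trivial cofibrations, the key is to show that applying $\Omega C_*$ (equivalently the cobar of the chain coalgebra) to an inner horn inclusion or an equivalence-encoding map yields a quasi-equivalence of dg algebras, since weak equivalences in $\cuco$ are exactly the maps $\Omega$ sends to quasi-equivalences.

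The hard part will be this last verification: showing that $C_*$ takes the generating trivial cofibrations to weak equivalences. This is essentially the content of the Rivera--Zeinalian comparison in the one-object case, so I would aim to reduce to \cite{Zeinalia16} by analyzing $\Omega C_*(\Delta^n)$ and identifying it, via the cobar construction, with a dg model of the based loop space on the relevant simplicial set. Concretely, one expects $\Omega Q C_*(\Delta^n)$ to be quasi-equivalent to the dg category (or algebra, since everything is reduced) $\mathfrak{C}[\Delta^n]$ arising from the rigidification functor, and inner horn fillings then correspond to acyclic cofibrations under $\mathfrak{C}$; invertibility of the relevant morphisms must be checked separately. I expect the subtlety to lie in handling the curvature and the non-conilpotent coalgebras $C_*(\Delta^n)$ correctly before applying $Q$, and in checking that the colimit defining $C_*$ interacts well with the weak equivalences, which is why the perfectness of the class $W$ established in Proposition \ref{prop:coalgebramodel} is useful here.
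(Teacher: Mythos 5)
Your construction of the adjunction itself matches the paper's: writing $C_*(K)=\colim_{\Delta K} QC_*(\Delta^n)$ realizes $Q\circ C_*$ as a left Kan extension along the Yoneda embedding, which forces the right adjoint to be $\Hom_{\cuco}(QC_*(\Delta^\bullet),-)$. The gap lies in your verification that the adjunction is Quillen. You propose to check the generating trivial cofibrations of the Joyal model structure, which you describe as ``the inner horn inclusions together with the maps encoding the invertibility of equivalences''. This rests on a false premise: it is a well-known feature of the Joyal model structure that \emph{no explicit set of generating trivial cofibrations is known}. The saturated class generated by the inner horn inclusions together with the endpoint inclusion into the interval $J$ is strictly smaller than the class of all trivial cofibrations; lifting against these maps characterizes fibrations only \emph{between fibrant objects} (isofibrations of quasicategories), not arbitrary categorical fibrations. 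So verifying that $C_*$ (or $\Omega C_*$) behaves well on those maps would not establish that $C_*$ preserves all trivial cofibrations. A secondary problem is that inner horn inclusions are not maps of \emph{reduced} simplicial sets, so they cannot serve as generating maps for $\qCat^0$ in any case; the model structure on reduced simplicial sets is not obtained by simply restricting the generators.

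The standard repair --- and, in effect, what the paper does --- avoids generating trivial cofibrations altogether: since trivial cofibrations are exactly the cofibrations that are weak equivalences, it suffices to show that $C_*$ preserves cofibrations (your easy inclusion argument) and \emph{all} categorical equivalences of reduced simplicial sets. That second statement is genuinely the hard part, and the paper does not reprove it; it cites \cite[Lemma 3.4]{HolsteinH}, which says precisely that $C_*:\qCat^0\to\dgco$ is left Quillen (proved there in the Rivera--Zeinalian spirit you gesture at via $\mathfrak C$ and loop-space arguments). The paper then lands in $\cuco$ by composing with the inclusion $i:\dgco\to\cuco$, which is left Quillen by the uncurving adjunction of Corollary \ref{cor:uncurvingcoalg} and Remark \ref{rem:hquillen} --- a clean factorization that your proposal skips by working with $\cuco$ and $\Omega$-detected weak equivalences directly. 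As written, your argument does not go through; to complete it you would need to either prove the analogue of \cite[Lemma 3.4]{HolsteinH} (preservation of all categorical equivalences), or invoke it, rather than reduce to a nonexistent set of generators.
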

\begin{proof}
	As we may write the usual reduced chain coalgebra as $C_{*}(K) = \colim_{\Delta K} C_{*}(\Delta^{n})$ we have an adjunction $Q \circ C_{*}: \sSet \rightleftarrows \dgco: \Hom(QC_{*}(\Delta^{\bullet}), -)$.
	This factors through reduced simplicial sets $C_{*}: \sSet^{0} \rightleftarrows \dgco: \Hom(QC_{*}(\Delta^{\bullet}), -)$.
	
	By Lemma 3.4 in \cite{HolsteinH}
	$C_{*}$ is a left Quillen functor $\qCat^{0} \to \dgco$, which we compose with the left Quillen functor $i: \dgco \to \cuco$ from Corollary \ref{cor:uncurvingcoalg} and   Remark \ref{rem:hquillen}.
\end{proof}

\begin{defi}
	Let $A$ be a dg algebra. Then we  define its dg nerve $\Ndgp(A)$ as the 
	composition $\Hom(\Delta, \Ba(A))$.
\end{defi}
Using Lemma \ref{lem:conilpotentadjoint} we may write $\Ndgp(A)$ 
explicitly as the the simplicial set 
\[
n \mapsto  \MC(QC_{*}(\Delta^{n}), A) \cong \MC(C^*(\Delta^n, \tau_{\leq 0} A)).
\]
Here the truncation is needed as the MC construction with two arguments is $\MC(\Hom(QC_{<0}(\Delta^{n}), A))$.

Thus $\Ndgp(A)$ is the \emph{simplicial MC set} of the associative algebra $\tau_{\leq 0}A$. 
This construction of the simplicial MC set is analogous to (but different from) the simplicial MC set of a dg Lie algebra as constructed in \cite{Getzler09}, which uses differential forms on the $n$-simplex.

Then we have the following result.	
\begin{prop}\label{prop:dgagnerve}
	The functor $\Ndgp$ is a right Quillen functor from the category of  dg algebras to the category of reduced simplicial sets with the Joyal model structure. 
	Its left adjoint assigns to a reduced simplicial set $S$ the dg algebra $L(S):=\Omega(C_*(S))$, the cobar-construction of the chain coalgebra of $S$.
\end{prop}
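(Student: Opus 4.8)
The plan is to realise $\Ndgp$ as a composite of two right Quillen functors that have already been constructed, after which both the Quillen property and the identification of the left adjoint follow formally. By definition $\Ndgp(A) = \Hom(\Delta, \Ba(A))$, so $\Ndgp = \Hom(\Delta, -) \circ \Ba$, and the entire argument reduces to assembling the adjunctions of Theorem \ref{thm-barcobaralgebra} and Lemma \ref{lem-chainsquillen}.

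First I would invoke Theorem \ref{thm-barcobaralgebra}, which provides the Quillen adjunction $\Omega: \cuco_* \rightleftarrows \dga: \Ba$; in particular $\Ba$ is right Quillen with left adjoint $\Omega$. Next I would invoke Lemma \ref{lem-chainsquillen}, giving the Quillen adjunction $C_*: \qCat^0 \rightleftarrows \cuco: \Hom(\Delta, -)$, so that $\Hom(\Delta, -)$ is right Quillen with left adjoint $C_*$. The only bookkeeping required is to reconcile the mismatch of the intermediate categories: $\Ba$ lands in the finalized category $\cuco_*$, whereas $\Hom(\Delta,-)$ was defined on $\cuco$. This is harmless, since $\Hom(\Delta,-)$ extends to $\cuco_*$ by sending the final object $*$ to the terminal reduced simplicial set; indeed $\Hom_{\cuco_*}(QC_*(\Delta^n), *)$ is a single point in each degree. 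Because the model structure on $\cuco_*$ is by definition the one pulled back along $\Omega$ (cf.\ the discussion following Theorem \ref{thm-barcobaralgebra}), it agrees with the model structure used in Lemma \ref{lem-chainsquillen}, so both functors are right Quillen for one and the same model category in the middle.

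With this in hand, the composite $\Ndgp = \Hom(\Delta, -) \circ \Ba \colon \dga \to \qCat^0$ is a composite of right Quillen functors and hence right Quillen, proving the first assertion. Its left adjoint is the composite of the left adjoints in the opposite order, namely $\Omega \circ C_*$, so the left adjoint sends a reduced simplicial set $S$ to $\Omega(C_*(S))$. Finally I would record that for a reduced simplicial set $S$ the chain coalgebra $C_*(S)$ has a single grouplike element and is therefore already conilpotent, so that applying the conilpotent quotient $Q$ changes nothing; thus $L(S) = \Omega(C_*(S))$ is precisely the cobar construction of the chain coalgebra of $S$, as claimed.

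The argument is essentially formal once the two component adjunctions are taken as given; the genuine point requiring care is the final-object bookkeeping between $\cuco$ and $\cuco_*$, together with the verification that both input adjunctions are Quillen with respect to the same model structure on the intermediate category. Everything else is the standard fact that right Quillen functors compose and that an adjoint of a composite is the composite of the adjoints.
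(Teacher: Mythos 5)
Your proposal is correct and is essentially the paper's own argument: the paper proves this proposition in one line as "a direct consequence of Theorem \ref{thm-barcobaralgebra} and Lemma \ref{lem-chainsquillen}," i.e.\ precisely by composing the two Quillen adjunctions $\Omega \dashv \Ba$ and $C_* \dashv \Hom(\Delta,-)$ as you do. Your additional bookkeeping (extending $\Hom(\Delta,-)$ over the final object of $\cuco_*$ and noting that $C_*(S)$ is already conilpotent for reduced $S$, so $Q$ acts trivially) is exactly the detail the paper leaves implicit.
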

\begin{proof}
	This is a direct consequence of Theorem \ref{thm-barcobaralgebra} and Lemma \ref{lem-chainsquillen}.
\end{proof}

\begin{prop}\label{prop:dgnerveone}
	$\Ndgp$ as defined above is equivalent to $\Ndg$ as in Definition \ref{def:lurienerve}.
\end{prop}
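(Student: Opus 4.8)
The plan is to exploit the fact that both functors under comparison are \emph{right adjoints}, and hence are determined up to natural isomorphism by their left adjoints. Lurie proves in \cite{Lurie11} that $\Ndg$ is right Quillen, and Proposition \ref{prop:dgagnerve} shows the same for $\Ndgp$. Restricting attention to one-object dg categories (equivalently, to $\dga$) and noting that $\Ndg(A)_0$ and $\Ndgp(A)_0$ are both singletons, both functors take values in reduced simplicial sets and arise from adjunctions of the form $L \colon \sSet^{0} \rightleftarrows \dga \colon N$. It therefore suffices to identify the two left adjoints as functors $\sSet^{0} \to \dga$.

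First I would record that the left adjoint of $\Ndgp$ is, by Proposition \ref{prop:dgagnerve}, the functor $S \mapsto \Omega C_*(S)$, the cobar construction of the normalized chain coalgebra. On the other side, the one-object case of the identification of the left adjoint of Lurie's $\Ndg$ is precisely \cite[Proposition 7.1]{Zeinalia16}, which exhibits this left adjoint as the same cobar construction $\Omega C_*$. Since a right adjoint is determined up to canonical natural isomorphism by its left adjoint, the coincidence of left adjoints yields a natural isomorphism $\Ndgp \cong \Ndg$, which is the assertion.

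For a more self-contained argument one can instead compare the two functors level-wise. Using the bar-cobar adjunction (Theorem \ref{thm-barcobaralgebra}) together with Lemma \ref{lem:conilpotentadjoint}, the set of $n$-simplices of $\Ndgp(A)$ is corepresented by a dg algebra, namely $\Ndgp(A)_n \cong \Hom_{\dga}(\Omega Q C_*(\Delta^n), A)$, where the generators of $\Omega Q C_*(\Delta^n)$ are indexed by the nondegenerate simplices of $\Delta^n$ of positive dimension and the differential is the cobar differential dual to the Alexander--Whitney coproduct. Unravelling Definition \ref{def:lurienerve} in the one-object case exhibits $\Ndg(A)_n$ as $\Hom_{\dga}(\mathfrak{A}_n, A)$, where $\mathfrak{A}_n$ is the dg algebra freely generated by symbols $f_I$, one for each $I \subseteq [n]$ with $|I| \geq 2$, with differential prescribed by Lurie's structure equation. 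The truncation $\tau_{\leq 0}$ appearing in the explicit description of $\Ndgp$ matches Lurie's conventions exactly: it forces the generators attached to edges to be cocycles and suppresses any datum on vertices. One then constructs an isomorphism $\mathfrak{A}_n \cong \Omega Q C_*(\Delta^n)$, natural in $[n]$, and concludes by the Yoneda lemma, checking compatibility with faces and degeneracies to obtain an isomorphism of simplicial sets and naturality in $A$ to promote it to an isomorphism of functors.

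The main obstacle is precisely the construction of this last isomorphism $\mathfrak{A}_n \cong \Omega Q C_*(\Delta^n)$. The cobar differential on $\Omega Q C_*(\Delta^n)$ contains, for each generator $s^{-1}[i_0,\dots,i_k]$, \emph{all} codimension-one face terms $f_{I \setminus \{i_j\}}$, including the two obtained by deleting the endpoints $i_0$ and $i_k$, whereas Lurie's structure equation involves only the \emph{interior} faces together with a single splitting term $f_{\mathrm{back}} \circ f_{\mathrm{front}}$. Reconciling these two presentations requires a nonlinear change of generators that absorbs the endpoint face terms (and corrects the order of the composition term in accordance with Remark \ref{rem-order}); this is exactly the combinatorial content of \cite[Proposition 7.1]{Zeinalia16}, which is why the adjoint-uniqueness argument of the first two paragraphs is the most economical route.
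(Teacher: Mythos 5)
Your proposal is correct, but it takes a genuinely different route from the paper. The paper proves the proposition by a direct, level-wise identification: it introduces the twisted chain coalgebra $\utilde{C}_*(\Delta^n)$, whose differential keeps only the interior face maps $\sum_{i=1}^{n-1}(-1)^i\partial_i$, precisely so that Lurie's structure equation in Definition \ref{def:lurienerve} \emph{is} the Maurer--Cartan equation, giving $\Ndg(A)_n = \MC(\utilde{C}_*(\Delta^n), A)$; it then observes that $(\id, e)$, with $e$ the constant $1$-cochain of value $1$, is an isomorphism of \emph{curved} coalgebras $C_*(\Delta^n) \cong \utilde{C}_*(\Delta^n)$, whence $\Ndg(A)_n = \MC(\utilde{C}_*(\Delta^n),A) \cong \Hom_{\cuco}(\utilde{C}_*(\Delta^n), \Ba A) \cong \Hom_{\cuco}(C_*(\Delta^n), \Ba A) = \Ndgp(A)_n$, up to checking faces and degeneracies. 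In other words, the ``nonlinear change of generators'' absorbing the two endpoint face terms of the cobar differential --- which you correctly single out as the main obstacle to a direct comparison --- is exactly what the curved morphism $(\id,e)$ provides for free; this is the payoff of the curved formalism, and it is this mechanism, not a citation, that the paper uses. Your route instead combines uniqueness of adjoints, Proposition \ref{prop:dgagnerve}, and \cite[Proposition 7.1]{Zeinalia16}; this is legitimate (the paper itself records, in the remark after Definition \ref{def:lurienerve}, that the one-object identification is due to Rivera and Zeinalian, so there is no circularity) and it is the most economical argument, but it outsources the entire combinatorial content of the proposition to that external result. What the paper's approach buys is self-containedness and, more importantly, a technique that scales: Theorem \ref{thm:dgnerveadjoint} runs the same twisting argument for dg categories with arbitrarily many objects, where no analogue of the Rivera--Zeinalian computation is available to fall back on. Two small points you gloss over in the adjoint-uniqueness step: to restrict Lurie's adjunction to $\sSet^0 \rightleftarrows \dga$ you need that $\Ob(L(S)) \cong S_0$, so that $L$ of a reduced simplicial set is indeed a one-object dg category; and the two candidate left adjoints must be identified as functors, including the composition-order convention of Remark \ref{rem-order}, not merely objectwise.
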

As we will work out the details in a more general case in Theorem \ref{thm:dgnerveadjoint}, we only give an outline of the proof of this result. 
\begin{proof}[Sketch of proof]
	The key observation is that we may define a curved coalgebra $\utilde C_{*}(\Delta^n)$ for the $n$-simplex $\Delta^n$ as the chain coalgebra whose differential is obtained by removing the boundary summands, i.e.\ $\utilde d_{n} = \sum_{i=1}^{n-1} (-1)^{i} \partial_{i} = \partial_{n} - \partial_{0} - (-1)^{n}\partial_{n}$.
	
	As curved coalgebras $C_{*}(\Delta^n)$ and $\utilde C_{*}(\Delta^n)$ are isomorphic via the isomorphism $(\id, e)$ where $e: C_{1}(K) \to \ground$ is constant and takes the value 1.
	
	Then unravelling Lurie's definition we have
	\begin{align*}
	\Ndg(A)_{n} &= \MC(\utilde C_{*}(\Delta^{n}), A) \\
	& = \Hom_{\cuco}(\utilde C_{*}(\Delta^{n}), \Ba A)  \\
	& = \Hom_{\cuco}(C_{*}(\Delta^{n}), \Ba A) = \Ndgp(A)_{n}.
	\end{align*}
	Compatibility with face and degeneracy maps can be checked.
\end{proof}

\subsection{The dg nerve revisited}
In this section we will define a dg nerve functor $\Ndgp:\dgCat\to\Sset$ drawing on our work from the previous section.
We then show our construction agrees with the explicit description in Definition \ref{def:lurienerve}. 

We would like to apply the adjunction from Proposition \ref{prop:catadjunction} in the case where $C$ is the normalised chain coalgebra of a simplicial set with coefficients in $\mathbb Z$. 
The underlying graded $\mathbb Z$-module of $C_*(K)$ is free, thus we may consider its cobar construction, compare the discussion before Corollary \ref{cor:koszulz}.

Note however, that $C_*$ is a pointed coalgebra with a dg structure and a unique splitting given by the projection $C_* \to C_0$, but $C_*$ is not a pointed curved coalgebra since this splitting is not compatible with the differential.
However, it is isomorphic to a pointed curved coalgebra.
We will describe this pointed model explicitly.
\begin{defi}
Given a simplicial set $K$ we define its \emph{twisted cochain algebra} $\utilde C^*(K)$ as the usual normalized cochain algebra equipped with the differential $\tilde \delta = \delta + [-e, ]$, i.e.\ $\utilde \delta f = \delta f - e \cup f + (-1)^{|f|} f \cup e$
where $e$ is the constant 1-cochain with value 1. The curvature is $\delta e + e \cup e$.
We note that $\tilde C^*(K)$ is pseudocompact since $C^*(K)$ is and the \emph{twisted chain coalgebra} $\utilde C_*(K)$ is defined as the continuous dual of $\utilde C^*(K)$. 
\end{defi}

\begin{example}\label{eg:differentialsimplex}
	Given the $n$-simplex $\Delta^{n}$ we find that $\utilde C_*(\Delta^{n})$ as a subspace of the unnormalized chains has a canonical basis given by subsets of $\{0, \dots, n\}$.
	Unravelling the definition the differential becomes $\utilde \partial = \sum_{i=1}^{k-1} (-1)^{i}\partial_{i}$. 
\end{example}
\begin{rem}\label{rem:boundarydifferential}
	More generally, we want to think of the twisted differential $\tilde \delta$ as removing the boundary terms.
	
	Indeed, if $\sigma$ is a $0$-simplex such that $\sigma_{01}$ and $\sigma_{n-1,n}$ are non-degenerate then $(e\cup f) = f(\sigma_{1\dots n}) = f(\partial_0 \sigma)$
	and $\tilde \partial  = \sum_{i=1}^{k-1} (-1)^{i}\partial_{i}$.
	
	However, if $\sigma_{01}$ or $\sigma_{n-1,n}$ is degenerate, this formula no longer holds since the constant 1-cochain sends all non-degenerate simplices to 1, but of course all degenerate simplices must map to 0.
	
	Similarly $-e$ satisfies the Maurer-Cartan condition, unless there are nondegenerate simplices $\sigma$ with degenerate $\sigma_{01}$ or $\sigma_{n-1,n}$.
\end{rem}
\begin{lem}\label{lem:twistedchainfunctor}
	$\utilde C_*$ is a functor from simplicial sets to $\ptdco^*$. 
	As a functor to curved coalgebras it is naturally isomorphic to the normalized chain coalgebra functor.
\end{lem}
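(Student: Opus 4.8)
The plan is to build everything from the twisting construction together with the known functoriality of the ordinary chain coalgebra, deducing the pointed curved structure and the natural isomorphism essentially for free. First I would observe that $\utilde C^*(K)$ is by definition the twist of the cochain dg algebra $(C^*(K),\delta)$ by the degree one element $-e$. Twisting a dg (pseudocompact) algebra by a degree one element yields a curved algebra, as recorded in Lemma \ref{lem:recognition} and the discussion following Definition \ref{defi:uncurvedalgebra}: the twisted differential is $\tilde\delta=\delta+[-e,-]$ and the curvature is $h=\delta(-e)+(-e)^2=-\delta e+e^2$. Dualizing (Definition \ref{def:curvedcoa}), $\utilde C_*(K)$ is a curved coalgebra. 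Since twisting alters only the differential and the curvature and leaves the comultiplication untouched, the underlying graded coalgebra of $\utilde C_*(K)$ is the chain coalgebra $C_*(K)$, which is pointed with coradical $\ground[K_0]$ and grouplike elements the vertices of $K$; hence $\utilde C_*(K)$ is a pointed curved coalgebra.

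Next I would equip it with its split structure. I take $\epsilon\colon \utilde C_*(K)\to\ground[K_0]$ to be the projection onto $0$-chains. A short Alexander--Whitney computation shows $\epsilon$ is a coalgebra map, and it is compatible with the twisted differential: by the description of $\tilde\partial$ as removing boundary faces (Example \ref{eg:differentialsimplex}, Remark \ref{rem:boundarydifferential}), $\tilde\partial$ annihilates $0$- and $1$-chains, so $\epsilon\tilde\partial=0=\tilde\partial\epsilon$ and $\tilde\partial|_{\ground[K_0]}=0$. This verifies the conditions of Definition \ref{def:semicoalgebraworking}, so $(\utilde C_*(K),\epsilon)$ is a split, hence pointed, curved coalgebra, and adjoining $*$ places it in $\ptdco^*$.

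For the comparison with chain coalgebras I would work on the dual side and claim that $(\id,e)\colon C^*(K)\to\utilde C^*(K)$ is an isomorphism of curved algebras. Checking Definition \ref{def:curvedalg}: condition (1) reads $\delta x=\tilde\delta x+[e,x]=\delta x+[-e,x]+[e,x]=\delta x$, while condition (2) reads $0=h+\tilde\delta e+e^2$, which follows from $\tilde\delta e=\delta e+[-e,e]=\delta e-2e^2$ and $h=-\delta e+e^2$. The composition rule of Definition \ref{def:curvedalg} exhibits $(\id,-e)$ as a two-sided inverse. Dualizing produces mutually inverse curved-coalgebra isomorphisms between $C_*(K)$ and $\utilde C_*(K)$; fix one, $\phi_K\colon C_*(K)\xrightarrow{\sim}\utilde C_*(K)$. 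Since the chain coalgebra is a functor with values in dg coalgebras, hence in curved coalgebras, setting $\utilde C_*(f)\coloneqq\phi_L\circ C_*(f)\circ\phi_K^{-1}$ makes $\utilde C_*$ a functor to curved coalgebras for which $\phi$ is automatically a natural isomorphism; this is the second assertion. A direct computation with the composition rule of Definition \ref{def:curvedcoa} then identifies this morphism as $\utilde C_*(f)=(C_*(f),\,e_L\circ C_*(f)-e_K)$ (up to sign), whose curving part records precisely the edges collapsed by $f$.

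It remains to upgrade $\utilde C_*$ to a functor into $\ptdco^*$, that is, to check that each $\utilde C_*(f)$ is a morphism of split curved coalgebras in the sense of Definition \ref{def:curvedmorphism}. The splitting compatibility $\epsilon_L\circ C_*(f)=C_*(f)\circ\epsilon_K$ is immediate, as $C_*(f)$ is degree preserving and sends vertices to vertices; $C_*(f)$ is a $\ground[L_0]$-bicomodule map because it respects endpoints, and the curving correction depends only on the endpoints of an edge, so it too is a bicomodule map and factors through $L_0$ as required. I expect this final step, verifying the curved-morphism and split conditions coherently, to be the main obstacle: these are exactly the compatibilities that break for the naive untwisted map $C_*(f)$ when $f$ collapses an edge, and the entire purpose of twisting by $-e$ is to absorb that failure into the curving datum so that the conditions of Definition \ref{def:curvedmorphism} hold on the nose.
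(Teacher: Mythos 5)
Your proposal is correct and takes essentially the same approach as the paper: the key step in both is the curved-coalgebra isomorphism $(\id,\pm e)$ between $C_*(K)$ and $\utilde C_*(K)$, with functoriality of $\utilde C_*$ defined by conjugating $f_*$ by these isomorphisms (your $\phi_L\circ C_*(f)\circ\phi_K^{-1}$ is the paper's $(\id,-e)\circ f_*\circ(\id,e)$), and pointedness deduced from the vanishing of $\utilde\partial$ in degree $1$. Your write-up merely makes explicit the verifications (the curved-morphism identities for $(\id,e)$, the split structure, and the bicomodule/factorization conditions of Definition \ref{def:curvedmorphism}) that the paper's two-line proof leaves implicit.
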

\begin{proof}
	As the differential $\utilde \partial$ in degree 1 is just the zero map it is immediate that the natural projection $\tilde C_* \to C_0$ is compatible with the differential and $\utilde C_*(K)$ is a pointed curved coalgebra.
	
	The isomorphism is given by $(\id, -e)$ on objects and any map $f: K \to L$ of simplicial sets gives a map $(\id, -e) \circ f_* \circ (\id, e)$ on twisted chains.
	
\end{proof}

As $-e$ is not in general a Maurer-Cartan
element the differential $\utilde \partial$ does not square to 0 and the twisted chain coalgebra has nonzero curvature in general, cf.\ Remark \ref{rem:boundarydifferential}.

\begin{lem}\label{lem:tildeadjunction}
There is an adjunction $\utilde C: \sSet \rightleftarrows \ptdco^*: \Hom(\utilde C_*(\Delta^{\bullet}), -)$. We denote the right adjoint by $F$.
\end{lem}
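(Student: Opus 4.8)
The plan is to recognize Lemma \ref{lem:tildeadjunction} as an instance of the standard nerve--realization (Kan) adjunction attached to a cosimplicial object. The functoriality supplied by Lemma \ref{lem:twistedchainfunctor} makes $n \mapsto \utilde C_*(\Delta^n)$ a cosimplicial object in $\ptdco^*$, and the right adjoint is its associated singular functor $F = \Hom(\utilde C_*(\Delta^\bullet), -)$, defined on $C \in \ptdco^*$ by $F(C)_n = \Hom_{\ptdco^*}(\utilde C_*(\Delta^n), C)$, with faces and degeneracies induced by the coface and codegeneracy maps of $\utilde C_*(\Delta^\bullet)$. Since $\ptdco^*$ is cocomplete (as established above), this singular functor automatically admits a left adjoint, namely the realization $K \mapsto \colim_{\Delta K} \utilde C_*(\Delta^n)$, the colimit over the category of simplices $\Delta K$. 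The substance of the lemma is then to identify this abstract left adjoint with the functor $\utilde C_*$ itself.

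First I would record the adjunction via the co-Yoneda computation. Using the canonical presentation $K = \colim_{\Delta K}\Delta^n$ and the Yoneda identity $\Hom_{\sSet}(\Delta^n, F(C)) = F(C)_n$, one gets
\[
\Hom_{\sSet}(K, F(C)) \cong \lim_{\Delta K} \Hom_{\sSet}(\Delta^n, F(C)) \cong \lim_{\Delta K} \Hom_{\ptdco^*}(\utilde C_*(\Delta^n), C).
\]
Thus it suffices to produce a natural isomorphism $\utilde C_*(K) \cong \colim_{\Delta K}\utilde C_*(\Delta^n)$ in $\ptdco^*$, for then the right-hand side becomes $\Hom_{\ptdco^*}(\colim_{\Delta K}\utilde C_*(\Delta^n), C) \cong \Hom_{\ptdco^*}(\utilde C_*(K), C)$, which is exactly the desired adjunction bijection, natural in $K$ and $C$.

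The main step, and the main obstacle, is establishing this colimit formula, i.e.\ that $\utilde C_*$ preserves the canonical presentation of a simplicial set by its simplices. I would reduce it to the corresponding statement for ordinary chain coalgebras, $C_*(K) = \colim_{\Delta K} C_*(\Delta^n)$, already invoked in Lemma \ref{lem-chainsquillen}, together with the natural isomorphism $\utilde C_* \cong C_*$ of curved coalgebras from Lemma \ref{lem:twistedchainfunctor}. The delicate point is that the colimit must be computed in $\ptdco^*$, where, as the construction of colimits above shows, coproducts are direct sums on underlying graded (pointed) coalgebras but coequalizers carry a splitting-compatibility condition, and the colimit collapses to the formal final object $*$ precisely when that condition fails. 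Here I would verify that for the diagram $\utilde C_*(\Delta^\bullet)$ --- all of whose transition maps are honest morphisms of pointed curved coalgebras respecting the canonical splittings $\utilde C_*(\Delta^n) \to \ground[(\Delta^n)_0]$ --- the relevant compatibility always holds, so that $\colim_{\Delta K}\utilde C_*(\Delta^n)$ is computed on underlying graded coalgebras. It then has the same underlying graded coalgebra as $C_*(K)$, with differential, curvature and splitting fixed by naturality, and these agree with those of $\utilde C_*(K)$ by Lemma \ref{lem:twistedchainfunctor}. Finally, no special handling of the final object is needed beyond the boundary cases: $\utilde C_*(\emptyset) = (0,0)$ matches the empty colimit (the initial object of $\ptdco^*$), and $F(*) = \Delta^0$ is consistent with $*$ being terminal.
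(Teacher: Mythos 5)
Your proposal is correct and follows essentially the same route as the paper: both reduce the adjunction to the colimit formula $\utilde C_*(K) \cong \colim_{\Delta K}\utilde C_*(\Delta^n)$ in $\ptdco^*$ and then conclude by the standard co-Yoneda/limit manipulation $\Hom(\colim \utilde C_*(\Delta^n), C) \cong \lim \Hom(\Delta^n, F(C)) \cong \Hom(K, F(C))$. The only difference is presentational: the paper asserts the colimit formula ``as in the untwisted chain case,'' whereas you devote more care to justifying it (via the isomorphism of Lemma \ref{lem:twistedchainfunctor} and the explicit structure of coproducts and coequalizers in $\ptdco^*$), which is a refinement of the same argument rather than a different one.
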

Explicitly $F(\cat D)_n  = \Hom({\utilde C}_{*}(\Delta^{n}), \D)$ and the maps $[m] \to [n]$ induce natural maps $F(\cat D)_n \to F(\cat D)_m$ making it into a simplicial set. 

\begin{proof}
Let $K$ be a simplicial set and $C$ a pointed curved coalgebra.
As in the untwisted chain case we have $\utilde C_*(K) = \colim_{\Delta^n \in \Delta K}(\utilde C_*(\Delta^n), C)$ where $\Delta K$ is the simplex category of $K$.
Then by a standard argument
\begin{align*}
\Hom(\utilde C_*(K), C) &= \lim_{\Delta K}\Hom(\utilde C_*(\Delta^{n}), C) = \lim_{\Delta K}(\Delta^{n}, \Hom(\utilde C_*(\Delta^{\bullet}), C)) \\
&= \Hom(K, \Hom(\utilde C_*(\Delta^{\bullet}), C)) \qedhere
\end{align*}
\end{proof}

For later use we note down an explicit description of the functoriality for $\utilde C_*$ on the standard simplices.

\begin{lem} \label{lem:twistedchainssimplex}
	For a morphism $\alpha: [m] \to [n]$ we have $\utilde C_*(\alpha) = (\alpha_{*}, x_{\alpha}): \utilde C_*(\Delta^m) \to \utilde C_*(\Delta^n)$ where $\alpha_{*}$ sends $\sigma: [k] \to [m]$ to $\alpha \circ \sigma: [k] \to [n]$ and $x_{\alpha}(\sigma)$ is $0$, unless $\sigma$ is a 1-simplex and $\alpha \circ \sigma$ is degenerate.
	In the latter case $\alpha\circ \sigma$ is degenerate of the form $sp$ where $p$ is the point in $\Delta^n$ corresponding to $x_{\alpha}(\sigma) \in \utilde C_*(\Delta^n)$. 
\end{lem}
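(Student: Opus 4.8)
The plan is to compute $\utilde C_*(\alpha)$ directly from the formula recorded in the proof of Lemma~\ref{lem:twistedchainfunctor}, namely $\utilde C_*(\alpha) = (\id,-e)\circ\alpha_*\circ(\id,e)$, where $\alpha_*=(\alpha_*,0)$ is the ordinary strict chain map on normalized chains and the maps $(\id,\pm e)$ are the natural isomorphisms between twisted and untwisted chains. Since the asserted morphism $(\alpha_*,x_\alpha)$ has two components, I would treat the underlying coalgebra map and the degree-$1$ correction separately: the former is immediate, while the latter is extracted purely formally from the composition rule for curved-coalgebra morphisms in Definition~\ref{def:curvedcoa}.

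For the underlying map, the three coalgebra components compose to $\id\circ\alpha_*\circ\id=\alpha_*$, and by functoriality of normalized chains this sends a simplex $\sigma\colon[k]\to[m]$ to $\alpha\circ\sigma\colon[k]\to[n]$ (understood modulo degeneracies), exactly as claimed. For the correction term I would apply the composition rule $(g,b)\circ(f,a)=(g\circ f,\,b\circ f+a)$ twice:
\[
(\id,-e_n)\circ(\alpha_*,0)\circ(\id,e_m)=(\id,-e_n)\circ(\alpha_*,e_m)=(\alpha_*,\,e_m-e_n\circ\alpha_*),
\]
so that the $\ground$-valued correction is the functional $a_\ground=e_m-e_n\circ\alpha_*$ on $\utilde C_*(\Delta^m)$.

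Next I would evaluate $a_\ground$ on the canonical basis of nondegenerate simplices from Example~\ref{eg:differentialsimplex}. Being of degree $1$, it vanishes on all chains other than $1$-chains. For a nondegenerate $1$-simplex $\sigma$ with vertices $\sigma(0)<\sigma(1)$ in $[m]$ one has $e_m(\sigma)=1$, while $e_n(\alpha\circ\sigma)$ equals $1$ if $\alpha\circ\sigma$ is nondegenerate and $0$ if it is degenerate, since $e$ is supported on nondegenerate edges and degenerate chains are zero in the normalized complex. Hence $a_\ground(\sigma)=1-1=0$ when $\alpha\circ\sigma$ is nondegenerate, and $a_\ground(\sigma)=1-0=1$ precisely when $\alpha(\sigma(0))=\alpha(\sigma(1))$, i.e.\ when $\alpha\circ\sigma$ is the degenerate edge $s\,p$ at the vertex $p=\alpha(\sigma(0))$.

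The one genuinely delicate point, and the step I expect to require the most care, is lifting the $\ground$-valued functional $a_\ground$ to the coradical-valued map $x_\alpha\colon\utilde C_*(\Delta^m)\to\utilde C_*(\Delta^n)_0=\ground[\{0,\dots,n\}]$ demanded by the split-curved-coalgebra morphism structure of Definition~\ref{def:curvedmorphism}. Here I would invoke the requirement that $x_\alpha$ be a $\utilde C_*(\Delta^n)_0$-bicomodule map: the left and right coactions on a $1$-chain $\sigma$ are governed by its endpoints, which $\alpha_*$ carries to $\alpha(\sigma(0))$ and $\alpha(\sigma(1))$. Thus $x_\alpha(\sigma)$ must lie in the $(\alpha(\sigma(0)),\alpha(\sigma(1)))$-isotypic summand of $\ground[\{0,\dots,n\}]$, which is nonzero only when $\alpha(\sigma(0))=\alpha(\sigma(1))=p$, where it equals $\ground\cdot e_p$. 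Combining this with the counit identity expressing $a_\ground$ as the composite of $x_\alpha$ with the counit, and with the value $a_\ground(\sigma)=1$ found above, forces $x_\alpha(\sigma)=e_p=p$, which is exactly the asserted identification. Signs cause no trouble throughout, since every nonzero value encountered is $+1$.
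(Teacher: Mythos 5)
Your proposal is correct and takes essentially the same route as the paper: the paper verifies the equivalent intertwining identity $(\id,-e)\circ(\alpha_*,0)=(\alpha_*,x_\alpha)\circ(\id,-e)$, i.e.\ $x_\alpha - e = -e\circ\alpha_*$, and concludes with the same observation that $e$ and $e\circ\alpha_*$ differ precisely on the 1-simplices that $\alpha$ degenerates, where the difference is $1$. Your extra step of lifting the $\ground$-valued functional to the coradical-valued $x_\alpha$ via the $D_0$-bicomodule condition of Definition \ref{def:curvedmorphism} is a detail the paper leaves implicit (it works only "on twisted cochains"), but it is a correct refinement rather than a different argument.
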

\begin{proof}
	We check on twisted cochains. 
	We need to check that $(\id, -e) \circ (\alpha_{*}, 0) = (\alpha_{*}, x_{\alpha}) \circ (\id, -e)$. 
	This is true if and only if $x_{\alpha} - e = -e \circ \alpha$. 
	This holds by unravelling definitions: $e$ and $e\circ{\alpha}$ agree except on 1-simplices which become degenerate under $\alpha$, and these are exactly the only simplices on which $x_{\alpha}$ is nonzero.
\end{proof}

\begin{defi}
The dg nerve $\Ndgp: \dgCat \to \qCat$ is defined as $\D \mapsto \MC({\utilde C}{}_{*}(\Delta^{\bullet}), \D)$. 
\end{defi}

\begin{rem}\label{rem:ndgpbar}
	If $\ground$ is a field we may write $\Ndgp(\cat D) = F\Ba(\cat D)$.
	This is immediate from Proposition \ref{prop:catadjunction} together with the isomorphism from Lemma \ref{lem:twistedchainfunctor}.
\end{rem}

\begin{lem}\label{lem:leftadjoint}
	The functor $\Ndgp$ is left adjoint to $\Omega \circ \utilde C_*$
\end{lem}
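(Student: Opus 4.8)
The plan is to exhibit the adjunction of the lemma as the nerve--realization adjunction attached to the cosimplicial dg category $n \mapsto \Omega\utilde C_*(\Delta^n)$. The starting point is the degreewise description $\Ndgp(\D)_n = \MC(\utilde C_*(\Delta^n), \D)$. Applying the cobar half of the Koszul correspondence (Theorem \ref{thm-semibarcobar}, restricted to dg categories as in Proposition \ref{prop:catadjunction}) I would rewrite this as a natural bijection $\MC(\utilde C_*(\Delta^n), \D) \cong \Hom_{\dgCat}(\Omega\utilde C_*(\Delta^n), \D)$. This step survives the relaxation to $\ground = \mathbb{Z}$ made in this section: the bar construction is not available there, but $\utilde C_*(\Delta^n)$ is free over $\ground$, so its cobar construction $\Omega\utilde C_*(\Delta^n)$ and the underlying $\MC$--cobar bijection are still defined, exactly as in the discussion preceding Corollary \ref{cor:koszulz}.

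Next I would pass from the standard simplices to an arbitrary simplicial set $K$ by density. As in the proof of Lemma \ref{lem:tildeadjunction} one has $\utilde C_*(K) = \colim_{\Delta K}\utilde C_*(\Delta^n)$, and since $\Omega$ is a left adjoint (Proposition \ref{prop:catadjunction}) it preserves this colimit, so $\Omega\utilde C_*(K) = \colim_{\Delta K}\Omega\utilde C_*(\Delta^n)$. Combining this with the degreewise bijection and the co-Yoneda formula $K = \colim_{\Delta K}\Delta^n$ gives
\[ \Hom_{\dgCat}(\Omega\utilde C_*(K), \D) \cong \lim_{\Delta K}\Hom_{\dgCat}(\Omega\utilde C_*(\Delta^n), \D) \cong \lim_{\Delta K}\Ndgp(\D)_n \cong \Hom_{\sSet}(K, \Ndgp(\D)). \]
This natural isomorphism is the adjunction between $\Ndgp$ and $\Omega\utilde C_*$ asserted by the lemma, with $\Omega\utilde C_*$ the colimit-preserving realization; it is precisely the description of the adjoint to Lurie's dg nerve recorded in Theorem B. Over a field one may shortcut the entire computation by composing the adjunction $\utilde C_* \dashv F$ of Lemma \ref{lem:tildeadjunction} with $\Omega \dashv \Ba$ of Proposition \ref{prop:catadjunction}, since $\Ndgp \cong F\Ba$ by Remark \ref{rem:ndgpbar}.

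The genuine work, and the only real obstacle, is the verification that the degreewise bijection is natural in the cosimplicial variable $n$ --- that is, that the simplicial operators of $\Ndgp(\D)$ correspond under the $\MC$--cobar correspondence to precomposition with the dg functors $\Omega\utilde C_*(\alpha)$ induced by $\alpha\colon[m]\to[n]$. The delicate feature is the curved correction term $x_\alpha$ from Lemma \ref{lem:twistedchainssimplex}: when $\alpha$ degenerates a $1$-simplex the induced map $\utilde C_*(\alpha) = (\alpha_*, x_\alpha)$ is a genuinely curved morphism, so I must check that the functoriality of $\Omega$ on curved maps established in Lemma \ref{lem:cobarfunctor} reproduces exactly the face and degeneracy operators of Lurie's simplicial structure, including the clauses of Definition \ref{def:lurienerve} that send doubly-degenerate subsets to $0$ and the relevant degenerate pairs to identity morphisms. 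Once this compatibility is pinned down the limit computation above is formal, and the argument holds over any principal ideal domain.
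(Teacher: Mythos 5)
Your overall route is the paper's own: identify $\Ndgp(\D)_n = \MC(\utilde C_*(\Delta^n), \D)$ with $\Hom_{\dgCat}(\Omega\utilde C_*(\Delta^n), \D)$ degreewise via Theorem \ref{thm-semibarcobar} (noting, correctly, that this bijection survives over $\mathbb Z$ because $\utilde C_*(\Delta^n)$ is free), then pass to arbitrary $K$ by writing $\utilde C_*(K) = \colim_{\Delta K}\utilde C_*(\Delta^n)$ and running the same chain of isomorphisms; the field-case shortcut via $F\Ba$ is likewise exactly the paper's first paragraph.

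However, there is a genuine gap at the one step where the integral case actually bites: you justify $\Omega\utilde C_*(K) = \colim_{\Delta K}\Omega\utilde C_*(\Delta^n)$ by saying $\Omega$ preserves colimits because it is a left adjoint, citing Proposition \ref{prop:catadjunction}. That adjunction $\Omega \dashv \Ba$ exists only over a field; the standing hypothesis of this section is $\ground = \mathbb Z$, where --- as you yourself observe two sentences earlier --- the bar construction is not available, so $\Omega$ has no right adjoint in sight and left-adjointness cannot be invoked. This is precisely the point the paper handles by hand: for $C = \colim_i C^i$ with free underlying $\ground$-modules one verifies directly that
\[
\Hom(\colim_i \Omega C^i, \D) \cong \lim_i \MC(\Hom(\bar C^i, \D)) \cong \MC(\Hom(\colim_i \bar C^i, \D)) \cong \Hom(\Omega \colim_i C^i, \D),
\]
using that taking Maurer--Cartan elements commutes with limits and that $C \mapsto \bar C$ commutes with colimits. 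Without some such argument your final limit computation does not close up over $\mathbb Z$, which is the case Theorem \ref{thm:dgnerveadjoint} actually needs. A smaller point of scope: the compatibility with the explicit clauses of Definition \ref{def:lurienerve} (degenerate subsets mapping to $0$ or to identities), which you single out as the main obstacle, is the content of Theorem \ref{thm:dgnerveadjoint} rather than of this lemma; here the simplicial structure on $\Ndgp(\D)$ is by definition induced by the curved maps $\utilde C_*(\alpha)$, so the naturality in $n$ you need is just the functoriality of $\Omega$ on curved morphisms (Lemma \ref{lem:cobarfunctor}) together with the naturality in $C$ of the bijection in Theorem \ref{thm-semibarcobar}, and you are making that verification harder than this lemma requires.
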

\begin{proof}
If $\ground$ is a field we may just combine the adjunctions of Proposition \ref{prop:catadjunction} and Lemma \ref{lem:tildeadjunction}
to obtain an adjunction between $F \circ \Ba$ and $\Omega \circ \utilde C_*$.
 
 If $\ground$ is not a field we argue as follows.
 Let $K$ be an arbitrary simplicial set, then we have:
 \begin{align*}
 \Hom(K, \Ndgp(\cat D)) & \cong \Hom(K, \MC({\utilde C}_{*} \Delta^\bullet, \cat D)) \\
 & \cong \Hom(K, \Hom(\Omega \utilde C_*(\Delta ^\bullet), \cat D))\\
 & \cong \lim_{\Delta K} \Hom(\Delta^n, \Hom(\Omega \utilde C_*(\Delta^\bullet), \cat D)) \\
 & \cong \lim_{\Delta K} \Hom(\Omega \utilde C_*(\Delta^n), \cat D) \\
 & \cong \Hom(\Omega \utilde C_*(K), \cat D)
 \end{align*}
 To get to the second line we use the bijection $\Hom_{\dgCat}(\Omega C, \cat D) \cong \MC(C, \cat D)$
 for $C$ in $\ptdco_{\operatorname{fr}}$. 
 This is the second isomorphism of Theorem \ref{thm-semibarcobar}. 
 Our ground coalgebra $C_0$ is now a direct sum of copies of $\mathbb Z$, but the proof is unaffected by working over the integers as $\utilde C_*$ is free over $\mathbb Z$.
 
Later we use the fact that $\Omega \circ \utilde C_*$ commutes with colimits.
 This is clear for $\utilde C_*$ by Lemma \ref{lem:tildeadjunction} and for $\Omega$ we argue as follows.
 Let $\cat D$ be an arbitrary dg category and $C$ be a split curved coalgebra with free underlying $\ground$-module.
 Let $C = \colim_i C^i$. 
 As all morphism spaces decompose over the coproduct of maps $f_O: C_0 \to \ground[\Ob(\cat D)]$ we consider each summand in turn and fix a map $f_O$ (equivalently a collection of maps $C^i_0 \to \ground[\Ob(\cat D)]$).
Then we have
 $\Hom(\colim_i \Omega C^i, \cat D) \cong \lim_i \Hom(\Omega C^i, \cat D) \cong \lim_i \MC(\Hom(\bar C^i, \cat D)) 
 \cong \MC(\lim_i \Hom(\bar C^i, \cat D)) \cong \MC(\Hom( \colim_i \bar C^i, \cat D)) \cong \Hom(\Omega \colim_i C^i, \cat D)$. 
 Here we use the fact that taking Maurer-Cartan elements commutes with limits and taking the quotient by the coradical commutes with colimits in $\ptdco^*$.
\end{proof}
\begin{theorem}\label{thm:dgnerveadjoint}
In the adjunction $L: \qCat \rightleftarrows \dgCat: \Ndg$ described in \cite{Lurie11} there are natural isomorphisms $\Ndg(\cat D) \cong \Ndgp(\cat D)$ 
and $L(K) \cong \Omega \utilde C_*(S)$.
\end{theorem}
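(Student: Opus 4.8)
The plan is to first establish a levelwise natural isomorphism $\Ndg(\cat D) \cong \Ndgp(\cat D)$ of simplicial sets, and then deduce the statement about left adjoints purely formally. Since Lemma \ref{lem:leftadjoint} provides an adjunction $\Omega \circ \utilde C_* \dashv \Ndgp$ and \cite{Lurie11} provides $L \dashv \Ndg$, once $\Ndg \cong \Ndgp$ is known as functors $\dgCat \to \qCat$, uniqueness of adjoints up to natural isomorphism yields $L \cong \Omega \utilde C_*$. Thus the entire content of the theorem is the identification of the two nerve functors, and this identification follows the template of the sketch of Proposition \ref{prop:dgnerveone}, carried out now in full categorical generality.

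First I would unwind both sides in simplicial degree $n$. By definition $\Ndgp(\cat D)_n = \MC(\utilde C_*(\Delta^n), \cat D)$, and the explicit description of $\MC$ following Proposition \ref{prop:catadjunction} presents a point as a map $O \colon \{0,\dots,n\} \to \Ob(\cat D)$ together with a Maurer--Cartan element $\xi$ of the convolution algebra $\Hom_{C_0}(\bar C, V_{\cat D})$, where $C = \utilde C_*(\Delta^n)$. Using the canonical basis of $\utilde C_*(\Delta^n)$ indexed by subsets $I \subseteq \{0,\dots,n\}$ from Example \ref{eg:differentialsimplex}, such a $\xi$ is precisely a family $\{f_I\}$ with $f_I \in \Hom_{\cat D}(X_{\min I}, X_{\max I})$ of the appropriate degree, where $X_i = O(i)$; here one matches the homological grading of \cite{Lurie11} with the cohomological convention of this paper by negating degrees. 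The crucial simplification is that, by Remark \ref{rem:boundarydifferential}, the cochain $-e$ is Maurer--Cartan on every standard simplex, so the curvature of $\utilde C_*(\Delta^n)$ vanishes and the Maurer--Cartan equation reduces to $d\xi + \xi * \xi = 0$ with no curvature term.

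Next I would verify that this equation, evaluated on a basis element $I = \{i_- < i_m < \dots < i_1 < i_+\}$, reproduces Lurie's equation in Definition \ref{def:lurienerve}. The term $d\xi$ contributes $d_{\cat D}(f_I)$ together with $\xi$ applied to the inner differential $\utilde\partial = \sum_j (-1)^j \partial_j$ of Example \ref{eg:differentialsimplex}, producing the sum $\sum_j (-1)^j f_{I \setminus \{i_j\}}$; the convolution product $\xi * \xi$, computed through the Alexander--Whitney coproduct, produces the composites $\sum_j (-1)^j f_{\{i_j < \dots < i_+\}} \circ f_{\{i_- < \dots < i_j\}}$. This is a purely sign-bookkeeping computation, identical in spirit to the one-object case sketched for Proposition \ref{prop:dgnerveone}, and it yields a bijection $\Ndg(\cat D)_n \cong \Ndgp(\cat D)_n$ for each $n$, natural in $\cat D$.

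The last and genuinely delicate step is to check that this family of bijections is compatible with the simplicial operators, and this I expect to be the main obstacle. For a monotone $\alpha \colon [m] \to [n]$ the operator on $\Ndgp$ is precomposition with $\utilde C_*(\alpha) = (\alpha_*, x_\alpha)$ as described in Lemma \ref{lem:twistedchainssimplex}. When $\alpha|_J$ is injective the coalgebra part $\alpha_*$ sends $J$ to the nondegenerate simplex $\alpha(J)$, giving $f_{\alpha(J)}$ and matching Lurie's value $g_J = f_{\alpha(J)}$. The subtlety is the degenerate case: when $\alpha|_J$ fails to be injective the pushforward $\alpha_*(J)$ vanishes in normalized chains, but the affine correction $x_\alpha$ --- which by Lemma \ref{lem:twistedchainssimplex} is nonzero precisely on the $1$-simplices $J = \{j,j'\}$ that $\alpha$ collapses, where it returns the grouplike element at $\alpha(j)$ --- feeds, through the composition law for curved coalgebra morphisms and the unit of $\cat D$, exactly the identity morphism $\id_{X_{\alpha(j)}}$, while all higher non-injective $J$ give $0$. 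This reproduces Lurie's degeneracy rule, and the careful tracking of how $x_\alpha$ converts into identity morphisms is where the real work lies. With simplicial compatibility in hand the natural isomorphism $\Ndg \cong \Ndgp$ is established, and the identification $L \cong \Omega \utilde C_*$ follows as explained.
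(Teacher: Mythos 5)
Your proposal is correct and follows essentially the same route as the paper: unravel $\Ndgp(\cat D)_n = \MC(\utilde C_*(\Delta^n),\cat D)$ via the explicit basis of subsets, match the Maurer--Cartan equation (with vanishing curvature on standard simplices) against Lurie's equation in Definition \ref{def:lurienerve}, and then check compatibility with simplicial operators using Lemma \ref{lem:twistedchainssimplex}, with the collapsed $1$-simplices and the correction term $x_\alpha$ accounting for the identity morphisms, exactly as in the paper's three-case analysis. Your explicit remarks that the curvature of $\utilde C_*(\Delta^n)$ vanishes and that the adjoint statement follows formally by uniqueness of adjoints from Lemma \ref{lem:leftadjoint} are points the paper leaves implicit, but they are correct and consistent with its argument.
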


\begin{proof}
We will explicitly compare $\Ndgp(\cat D) \coloneqq \MC(\utilde C_{*}(\Delta^\bullet), \cat D)$ with Lurie's construction of $\Ndg$ as recalled in Definition  \ref{def:lurienerve}.

We recall first one difference in convention: We write our composition as $(f,g) \mapsto f\circ g$ while Lurie uses the convention $(f,g) \mapsto g\circ f$, cf.\ Remark \ref{rem-order}.

For $n=0$ we have $\Ndgp(\cat D)_{0} = \Ob(\cat D) = \Ndg(\cat D)$.
To be precise, unravelling the definitions we have 
$\Ndgp(\cat D)_{0} = \amalg_{D \in \Ob(\cat D)} \MC(0, \cat D)) = \amalg_{D \in \Ob(\cat D)} *$. 

For $n \geq 1$ we have
\begin{align*}
\Ndgp(\cat D)_n & \cong \coprod_{O: \Ob([n]) \to \Ob(\cat D)} 
\MC \left[\prod_{s, t\in\Ob([n])} \Hom_{\ground}\left(\ground .e_{s} \boxempty _R \utilde C_{<0}(\Delta^{n}) \boxempty_R \ground.e_{t}, \Hom_{\cat D}(O(s),O(t)\right) \right] \\
\\ 
& \cong \coprod_{O: \Ob([n]) \to \Ob(\C)} 
\MC \left[\prod_{s, t\in\Ob([n])} e_{s} \utilde C^{>0}(\Delta^{n}) e_{t} \otimes_{\ground} \Hom_{\cat D}(O(s),O(t)) \right].
\end{align*}
Here we write $R$ for the coalgebra ${\ground[\Ob([n])]}$.
The differential is induced by the differential in $\Hom_{\cat D}$ and the differential $\sum_{i=1}^{n-1}(-1)^{i}\partial_{i}$ in $\utilde C_*(\Delta^{n})$, see Example \ref{eg:differentialsimplex}. 
The product is induced by the coproduct $\sigma \mapsto \sum_{i=0}^{n} \sigma_{0 \dots i} \otimes \sigma_{i \dots n}$ where $\sigma_{i\dots j}$ is the restriction of an $n$-simplex along $[i,j] \subset [0,n]$.
(We could remove the $i=0$ and $i=n$ terms as they are necessarily 0 and do not contribute.)

Unravelling definitions, we conclude that this is exactly $\Ndg(\cat D)_n$ as defined by Lurie (once we flip the order of composition).

For the face and degeneracy maps, consider the rules for $\alpha: [m] \to [n]$ inducing a map $\Ndg(\cat D)_n \to \Ndg(\cat D)_{m}$ in Definition \ref{def:lurienerve} and compare with the functoriality of $\utilde C_*$ set out in Lemma \ref{lem:twistedchainfunctor}.

For any subset $J \subset [m]$ we distinguish three cases:

If $\alpha|_{J\subset [m]}$ is injective we just consider the map $f \mapsto f \circ \alpha|_{J}$ on cochains in both cases.

If $\alpha|_{J\subset [m]}$ is not injective then in general the induced map on $\Ndg$ is $0$.
This agrees with our definition on $\Ndgp$ because the image of $J$ is a reduced simplex and there is no MC element associated to it. 

However, there is an exception in the case that $\alpha|_{J}$ is of the form $[1] \to [0]$. 
In this case Definition \ref{def:lurienerve} sends $X_{0} \in \Ndg(\cat D)_{0}$ to the data $((X_{0}, X_{0}), \id_{X_{0}})$ in $\Ndg(\cat D)_{1}$.

By Lemma \ref{lem:twistedchainssimplex} the map on $\Ndgp(C)$ is induced by $(\alpha_*, x_\alpha)$. Let now $\alpha|_J: [1] \to [0]$. 
This is the only case when $x_\alpha$ takes a nonzero value. 
We split the semialgebra corresponding to $\cat D$ as $V_{\cat D} = \bar V_{\cat D} \oplus \ground[\Ob(\cat D)]$ where the second summand is identified with the units.
The MC element $\xi: \utilde C_0 \to \cat D$ is sent to $ \xi\circ \alpha + x_\alpha: \utilde C_1 \to \bar V_{\cat D} \oplus \ground[\Ob(\cat D)]$. 

The first summand vanishes as $\xi \circ \alpha$ is degenerate, and the second summand picks out the element $e_{X_0} \in \ground_{\Ob(\cat D)}$ corresponding to the identity at $X_0$. Thus the map induced by $\alpha$ sends $X_{0} \in \Ndgp(\cat D)_{0}$ to $((X_{0}, X_{0}), \id_{X_{0}})$ in $\Ndg(\cat D)_{1}$. 
\end{proof}

We have the following corollary of Theorem \ref{thm:dgnerveadjoint}, which makes precise that Koszul duality is a linear version of the coherent nerve construction, an idea which was previously exploited in \cite{HolsteinH}.

\begin{cor}\label{cor:nervekoszul}
	Consider the following diagram of $\infty$-categories
	
	\[	\xymatrix@=32pt{
		\qCat\ar@<0.5ex>[r]^{\mathfrak C}\ar[d]_{\utilde C_*}
		&\sCat\ar@<0.5ex>[l]^{\Ncoh}\ar[d]^{G_*}\\
		\ptdco^* \ar@<0.5ex>[r]^{\Omega}
		&\dgCat\ar@<0.5ex>[l]^{\Ba}
	}
	\]
	where the horizontal arrows are $\infty$-equivalences and downward arrows are induced by normalized chain functors. 
	Then in the associated diagram of homotopy categories
	we have natural equivalences $G_*\circ\mathfrak{C}\simeq \Omega\circ \utilde{C}_*$ and  $\utilde{C}_*\circ \Ncoh\simeq \Ba\circ \utilde{C}_*$.
	
	If $\ground$ is not a field we understand the categories on the bottom to be $\ptdco^*_{\fr}$ and $\dgCat_{\fr}$.
\end{cor}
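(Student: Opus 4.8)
The plan is to establish only the equivalence of left adjoints, $G_*\circ\mathfrak C\simeq\Omega\circ\utilde C_*$, and to deduce the right-adjoint statement formally. By Theorem~\ref{thm:koszulquillen} the pair $\Omega\dashv\Ba$ descends to an inverse pair of equivalences on homotopy categories, and by \cite{Lurie11a} the same holds for $\mathfrak C\dashv\Ncoh$. Assuming $G_*\circ\mathfrak C\simeq\Omega\circ\utilde C_*$, I would postcompose with $\Ba$ to get $\Ba\circ G_*\circ\mathfrak C\simeq\Ba\circ\Omega\circ\utilde C_*\simeq\utilde C_*$, and then precompose with $\Ncoh$, using $\mathfrak C\circ\Ncoh\simeq\id$, to obtain $\Ba\circ G_*\simeq\utilde C_*\circ\Ncoh$. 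Thus the two asserted equivalences are interchangeable and it suffices to treat the first.

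By Theorem~\ref{thm:dgnerveadjoint} the right-hand composite is Lurie's left adjoint, $\Omega\circ\utilde C_*\cong L$, which preserves colimits by Lemma~\ref{lem:leftadjoint}; on the left, $\mathfrak C$ is the left derived functor of a left Quillen functor and $G_*$ is induced hom-wise by the colimit-preserving, homotopical normalized chains functor. Each composite is therefore determined, as a homotopy-colimit-preserving functor $\qCat\to\dgCat$, by its restriction to the simplex category $\Delta\hookrightarrow\sSet$. Consequently I would reduce the problem to producing a natural transformation between the two cosimplicial dg categories
\[
[n]\longmapsto G_*\mathfrak C(\Delta^n)\qquad\text{and}\qquad [n]\longmapsto\Omega\utilde C_*(\Delta^n)
\]
and checking that it is a levelwise quasi-equivalence compatible with cofaces and codegeneracies; left Kan extension along $\Delta\hookrightarrow\sSet$ then yields the desired natural equivalence of functors. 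For $\ground$ not a field the same scheme runs in $\dgCat_{\fr}$ and $\ptdco^*_{\fr}$, with Corollary~\ref{cor:koszulz} replacing Theorem~\ref{thm:koszulquillen} and using that $\utilde C_*$ is free over $\mathbb Z$.

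The core is the explicit comparison of these cosimplicial dg categories. Both have object set $\{0,\dots,n\}$ with hom-complexes supported on pairs $i\le j$. Here $\mathfrak C(\Delta^n)(i,j)$ is the nerve of the poset $P_{i,j}$ of subsets of $\{i,\dots,j\}$ containing $i$ and $j$ ordered by inclusion, with composition by union, so $G_*\mathfrak C(\Delta^n)(i,j)=C_*\!\big(N(P_{i,j})\big)$ has basis the nondegenerate flags of such subsets. On the other side, by Example~\ref{eg:differentialsimplex} and Definition~\ref{def:semicobar} the complex $\Omega\utilde C_*(\Delta^n)(i,j)$ is spanned by tensor words $s^{-1}S_1\otimes\cdots\otimes s^{-1}S_r$ of subsets with $\min S_1=i$, $\max S_r=j$ and $\max S_l=\min S_{l+1}$, with differential the sum of the interior-face differential and the deconcatenation splitting. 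I would define the comparison by sending a flag in $P_{i,j}$ to the tensor word of its successive atomic increments, this being the standard subdivision correspondence between cubical chains and the cobar complex; in the one-object case this is exactly \cite[Proposition 7.1]{Zeinalia16}, and here it is applied hom-wise. That it intertwines the differentials and is multiplicative is a direct bookkeeping check, and it is a levelwise quasi-isomorphism since $N(P_{i,j})$ is contractible and both sides resolve $\ground$ in degree $0$.

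The main obstacle is not the levelwise quasi-isomorphism, which follows from the contractibility of $N(P_{i,j})$ and the classical cobar computation, but the naturality of the comparison in $[n]$, and specifically its compatibility with codegeneracies. The delicate point is that a codegeneracy may collapse a vertex of the cube $N(P_{i,j})$, whereas on the coalgebra side the correction term $x_\alpha$ of Lemma~\ref{lem:twistedchainssimplex} intervenes, producing precisely the identity morphism attached to a collapsed $1$-simplex, matching the degenerate generators already analysed in the proof of Theorem~\ref{thm:dgnerveadjoint}. Verifying that these two mechanisms agree, together with the analogous (easier) compatibility with cofaces, is the crux; once done, the levelwise quasi-equivalence promotes to an equivalence of cosimplicial dg categories and the corollary follows.
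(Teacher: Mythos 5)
Your global architecture is genuinely different from the paper's. The paper proves the first equivalence abstractly: $G_*$ admits $H = U\circ DK\circ\tau_{\geq 0}$ as a right adjoint on homotopy categories (via \cite{Tabuada10}), $\Ncoh\circ H\simeq \Ndg$ by \cite[Proposition 1.3.1.17]{Lurie11}, and $\Omega\circ\utilde C_*\simeq L$ by Theorem \ref{thm:dgnerveadjoint}, so the two candidate left adjoints of $\Ndg$ must agree; the second equivalence then follows formally, exactly as in your first paragraph (which matches the paper's argument). Your replacement of the appeal to Lurie's Proposition 1.3.1.17 by a density-over-$\Delta$ argument plus an explicit hom-wise Rivera--Zeinalian comparison is a legitimate alternative strategy, but two of its steps fail as written.

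First, your comparison map is not a chain map. Take $n=3$, $i=0$, $j=3$, so $P_{0,3}=\{03,\,013,\,023,\,0123\}$ is a square. The atomic flag $03\subset 013\subset 0123$ maps to the word $s^{-1}\{0123\}$, whose cobar differential has four basis terms: the interior faces $\{013\}$, $\{023\}$ and the splittings $\{01\}|\{123\}$, $\{012\}|\{23\}$. But the image of the flag's boundary has at most two terms, because the diagonal $03\subset 0123$ has the non-atomic increment $\{1,2\}$ and is therefore killed; no sign convention repairs a $2\neq 4$ count. The standard correspondence you invoke actually goes the other way: a word maps to the alternating sum of all maximal flags of its cube (this subdivision map is the one underlying \cite{Zeinalia16}); in your direction one must instead use the iterated Alexander--Whitney projection, under which non-atomic flags map to sums of shorter words rather than to zero. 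Second, your justification that $G_*\circ\mathfrak C$ preserves homotopy colimits is insufficient: colimits (pushouts in particular) in $\sCat$ and $\dgCat$ are not computed hom-wise, so hom-wise properties of normalized chains prove nothing. Indeed $G_*$ is not even a strict left adjoint, since the Eilenberg--Zilber structure on $N_{DK}$ is only lax monoidal --- which is precisely why the paper asserts $G_*\dashv H$ only on homotopy categories; to get $\infty$-colimit preservation one needs Tabuada's zig-zag of Quillen adjunctions or an $\infty$-categorical change-of-enrichment adjunction. Together with the deferred naturality check against the $x_\alpha$ correction of Lemma \ref{lem:twistedchainssimplex} --- which you rightly identify as the crux but do not carry out --- the proposal is a plausible plan rather than a proof, and its one explicitly specified ingredient is incorrectly defined.
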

\begin{proof}
	The right vertical functor $G_*: \sCat \to \dgCat$ is more precisely the composition $i \circ N_{DK} \circ \ground$ where on hom spaces $i$ is inclusion of non-negative complexes, $N_{DK}$ is normalization and $\ground$ is the free functor.
	There is a natural functor $H = U \circ DK \circ \tau_{\geq 0}: \dgCat \to \sCat$ which is a right adjoint to $G_*$ on the level of homotopy categories. Note that $ DK $ is a left adjoint, see \cite{Tabuada10}, 
	but it induces an equivalence of homotopy categories.
	
	The composition $\Ncoh \circ H$ is equivalent to the dg nerve $\Ndg$, see  \cite[Proposition 1.3.1.17]{Lurie11}.
	$\Ndg$ is right Quillen 
	and its left adjoint is $L$ by Theorem \ref{thm:dgnerveadjoint}.
	
	Thus on the level of homotopy categories the two left adjoints of $\Ndg$ must agree and $L(S) = \Omega \utilde C_{*}(S)$ is homotopy equivalent to $G_* \circ \mathfrak C$. 
	In the other direction we have
	\[\Ba \circ G_* \simeq \Ba \circ G_* \circ \mathfrak C \circ \Ncoh \simeq \Ba \circ \Omega \circ \utilde C_* \circ \Ncoh \simeq \utilde C_* \circ \Ncoh.\]
	Here we use the fact that the top and bottom row are weak equivalences, by Theorem \ref{thm:koszulquillen} resp.\
	Corollary \ref{cor:koszulz}.	
\end{proof}

\begin{cor}\label{cor:ctildequillen}
		The functor $\tilde C_*: \qCat \to \ptdco^*$ preserves weak equivalences.
		If $\ground$ is a field it is left Quillen.
\end{cor}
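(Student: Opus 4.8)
The plan is to deduce both assertions from the identification $\Omega\circ\utilde C_* = L$ of Theorem \ref{thm:dgnerveadjoint}, where $L$ denotes the left adjoint to Lurie's dg nerve $\Ndg$. The first step is the reduction: by Proposition \ref{prop:coalgebramodel} (and, when $\ground$ is not a field, by the definition of weak equivalences in $\ptdco^*_{\fr}$ preceding Corollary \ref{cor:koszulz}) a morphism $g$ of $\ptdco^*$ is a weak equivalence precisely when $\Omega(g)$ is a quasi-equivalence. Applying this to $g=\utilde C_*(f)$ for a categorical equivalence $f$, the claim that $\utilde C_*$ preserves weak equivalences becomes equivalent to the assertion that $\Omega\utilde C_*(f)=L(f)$ is a quasi-equivalence. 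Thus it suffices to prove that $L\colon\qCat\to\dgCat$ sends categorical equivalences to quasi-equivalences, a statement that no longer mentions coalgebras at all.

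For this I would invoke that $L$ is left Quillen: its right adjoint $\Ndg$ is right Quillen for the Joyal model structure, as recalled in the proof of Corollary \ref{cor:nervekoszul} following \cite{Lurie11}. Since the cofibrations of $\qCat$ are the monomorphisms and $\emptyset$ is initial, every object of $\qCat$ is cofibrant, so Ken Brown's lemma guarantees that the left Quillen functor $L$ preserves weak equivalences between cofibrant objects, hence all weak equivalences. This argument is insensitive to the ground ring: over a principal ideal domain it still applies, using only the defining property of weak equivalences in $\ptdco^*_{\fr}$ together with the fact that $L$ remains left Quillen into $\dgCat$ (and lands in $\dgCat_{\fr}$, as $\utilde C_*(K)$ is free over $\ground$). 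As an alternative one could argue through the natural equivalence $\Omega\utilde C_*\simeq G_*\mathfrak C$ of Corollary \ref{cor:nervekoszul}, noting that $\mathfrak C$ sends categorical equivalences to Dwyer--Kan equivalences and $G_*$ sends those to quasi-equivalences, but the Ken Brown argument is cleaner.

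For the second statement I assume $\ground$ is a field, so that $\ptdco^*$ carries the model structure of Proposition \ref{prop:coalgebramodel}. The functor $\utilde C_*$ is a left adjoint by Lemma \ref{lem:tildeadjunction}, so it is left Quillen as soon as it preserves cofibrations and trivial cofibrations. A monomorphism $K\hookrightarrow L$ of simplicial sets induces, in each degree, an injection of the free modules on nondegenerate simplices, hence an inclusion $\utilde C_*(K)\hookrightarrow\utilde C_*(L)$; by Proposition \ref{prop:coalgebramodel} these inclusions are exactly the cofibrations of $\ptdco^*$, so $\utilde C_*$ preserves cofibrations. Combined with the first part, $\utilde C_*$ preserves both cofibrations and weak equivalences, hence preserves trivial cofibrations, and is therefore left Quillen.

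The substantive input is concentrated entirely in the reduction of the first paragraph: once one recognizes $\Omega\utilde C_*$ as the left adjoint $L$ of a right Quillen functor, preservation of weak equivalences is automatic from Ken Brown's lemma and requires no direct manipulation of the twisted chain coalgebras or their curvature. The main point demanding care is that this reduction remains valid over a general ground ring, where $\ptdco^*_{\fr}$ is only a relative category; the remaining verification, that monomorphisms of simplicial sets are sent to coalgebra inclusions, is routine.
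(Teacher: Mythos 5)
Your proof is correct, but for the first assertion it takes a genuinely different route from the paper. The paper's own proof is a one-liner resting on Corollary \ref{cor:nervekoszul}: there is a natural equivalence $\Omega\circ\utilde C_*\simeq G_*\circ\mathfrak C$ on homotopy categories, and since $\mathfrak C$ sends categorical equivalences to Dwyer--Kan equivalences and $G_*$ sends those to quasi-equivalences, $\Omega\utilde C_*(f)$ is a quasi-equivalence for every categorical equivalence $f$, which by the definition of weak equivalences in $\ptdco^*$ is exactly the claim. You instead bypass simplicial categories entirely: you identify $\Omega\utilde C_*$ with $L$ via Theorem \ref{thm:dgnerveadjoint}, invoke Lurie's result that $\Ndg$ is right Quillen (so $L$ is left Quillen), and conclude by Ken Brown's lemma since every object of $\qCat$ is cofibrant in the Joyal model structure. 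Neither route is more elementary than the other, because the paper's proof of Corollary \ref{cor:nervekoszul} itself uses the right-Quillenness of $\Ndg$ together with Theorem \ref{thm:dgnerveadjoint}; but your argument has a shorter dependency chain and makes the insensitivity to the ground ring transparent (Lurie's Quillen adjunction and Theorem \ref{thm:dgnerveadjoint} hold over a PID, and $\utilde C_*(K)$ is free, so the reduction via the defining property of weak equivalences in $\ptdco^*_{\fr}$ goes through), whereas the paper's phrasing keeps the statement tied to the linearized-coherent-nerve picture of Corollary \ref{cor:nervekoszul}. For the second assertion your argument coincides with the paper's: monomorphisms of simplicial sets give inclusions of twisted chain coalgebras, inclusions are cofibrations by Proposition \ref{prop:coalgebramodel}, and a left adjoint preserving cofibrations and weak equivalences is left Quillen; your extra observation that monomorphisms preserve nondegenerate simplices is the correct justification of the step the paper dismisses as following ``from the definitions.''
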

\begin{proof}
	Since $G_*$ and $\mathfrak C$ preserve weak equivalences so does $\tilde C_*$ by Corollary \ref{cor:nervekoszul}.
	If $\ground$ is a field we have model structures and it follows from the definitions that $\tilde C_*$ preserves cofibrations.	
\end{proof}

\begin{rem}
	It follows from Theorem \ref{thm:koszulquillen} and Corollary \ref{cor:ctildequillen} that $L \dashv \Ndg$ is Quillen if $\ground$ is a field.
	
	This result holds more generally, see \cite[Proposition 1.3.1.20]{Lurie11}. 	
\end{rem}

	
	
\begin{cor}\label{cor:chainalgebra}
	For any grouplike simplicial set $K$, the dg category $\Omega \tilde{C} K$ is naturally quasi-equivalent to the dg category obtained by applying the normalized singular chains functor to the topological path category of $|K|$, the geometric realization of $K$.
\end{cor}
\begin{proof}
Let us first assume that $K$ is connected. Then $K$ is categorically equivalent to a reduced group-like simplicial set.
Corollary \ref{cor:ctildequillen} and  Theorem \ref{thm:dgnerveadjoint} show that categorical equivalences are preserved by $\Omega \tilde C_*$.
The result now follows from \cite[Corollary 4.2]{HolsteinH}.

	The general statement now follows since $\Omega \tilde C_*$ preserves coproducts (i.e. disjoint unions) of simplicial sets. 
\end{proof}	
\begin{rem}
	Corollary \ref{cor:chainalgebra} generalizes \cite[Corollary 4.2]{HolsteinH} from reduced to connected (grouplike) simplicial sets.
	A similar argument generalizes \cite[Corollary 4.7]{HolsteinH} to connected Kan complexes.
\end{rem}

\begin{rem}\label{rem:mistake}
	We had originally anounced that $\utilde{C}_*: \qCat^0 \to \ptdco^*$ reflects weak equivalences (Corollary 4.22 in the previous version of this paper).
	Our proof was flawed and George Raptis pointed out the following counterexample: 
	Recall that if $\tilde C_*$ reflects weak equivalences then so does $\Omega \tilde C_*$, and thus $G_*\mathfrak C$. 
	Thus for a counterexample it suffices to present two simplical monoids which are homology isomorphic but not weakly equivalent.
	Let $A$ be some connected acyclic simplicial set and consider the free simplicial monoid on $A$. It will be homology isomorphic to the discrete monoid of the natural numbers, but not weakly equivalent.
	
	In other words a reduced quasi-category with endomorphisms freely generated by $A$ and a reduced quasi-category with endomorphisms freely generated by one morphism are $\tilde C_*$-equilent, but not categorically equivalent. 
\end{rem}

\section{Functor categories and comodules}\label{sect:functorcategories}
\begin{lem}\label{lem:quasidg}
Let $S$ be a simplicial set.
We denote by $R\Hom$ the derived internal hom of dg categories.
Then there is a categorical equivalence of quasicategories 
\[
\Ndg(R\Hom(L(S), \Ch)) \simeq \Fun(S, \Ndg(\Ch)).\]
In particular this implies that $\Fun(S, 
\Ndg(\Ch))$ is categorically equivalent to $\mathscr D(L(S))$, the $\infty$-category of dg modules over $L(S)$.
\end{lem}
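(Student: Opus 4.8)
The plan is to prove the displayed equivalence by representability: I would test both quasicategories against an arbitrary simplicial set $K$, show that they corepresent the same functor naturally in $K$, and then invoke the $\infty$-categorical Yoneda lemma. The two adjunctions driving this are, first, that $\Fun(S,-)$ is right adjoint to $-\times S$ on simplicial sets, giving $\Map_{\qCat}(K,\Fun(S,\Ndg(\Ch)))\simeq\Map_{\qCat}(K\times S,\Ndg(\Ch))$; and second, that $R\Hom(L(S),-)$ is right adjoint to $-\otimes^{\mathbb L}L(S)$ for the (derived) closed monoidal structure on $\dgCat$. Using the homotopy adjunction $L\dashv\Ndg$ (Quillen over a field by the remark following Corollary \ref{cor:chainalgebra}, and valid over $\mathbb Z$ by Lemma \ref{lem:leftadjoint}), the right-hand side becomes $\Map_{\dgCat}(L(K),R\Hom(L(S),\Ch))\simeq\Map_{\dgCat}(L(K)\otimes^{\mathbb L}L(S),\Ch)$, while applying $L\dashv\Ndg$ once more to the first adjunction turns it into $\Map_{\dgCat}(L(K\times S),\Ch)$. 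Note that every object of $\ptdco^*$ is cofibrant (cofibrations are inclusions and $(0,0)$ is initial), so $L=\Omega\tilde C_*$ needs no derived correction here.

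Comparing the two computations, the two sides agree naturally in $K$ precisely when
\[
L(K\times S)\;\simeq\;L(K)\otimes^{\mathbb L}L(S)\qquad\text{in }\dgCat.
\]
The Eilenberg--Zilber map then supplies a natural comparison morphism of quasicategories realizing this chain of equivalences on all mapping spaces, and Yoneda upgrades it to the desired categorical equivalence. \emph{The main obstacle is exactly this homotopy-monoidality of $L=\Omega\tilde C_*$.} It splits into two parts: that the twisted chain functor carries products to tensor products, $\tilde C_*(K\times S)\simeq\tilde C_*(K)\otimes\tilde C_*(S)$ as split curved coalgebras (Eilenberg--Zilber, compatible with coproduct and curvature via Lemma \ref{lem:twistedchainfunctor}), and that the cobar construction is monoidal up to the weak equivalences of $\ptdco^*$, i.e. $\Omega(C\otimes D)\simeq\Omega(C)\otimes^{\mathbb L}\Omega(D)$. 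The latter is the delicate point.

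Rather than verifying the monoidality of $\Omega$ by hand, I would discharge it through Corollary \ref{cor:nervekoszul}, which gives $L\simeq G_*\circ\mathfrak C$ and $\Ndg\simeq\Ncoh\circ H$ on homotopy categories, and thereby transport the statement to the coherent-nerve side, where it is classical. Concretely, the homotopy coherent nerve satisfies $\Fun(S,\Ncoh(\mathbf C))\simeq\Ncoh(\mathbf C^{\mathfrak C(S)})$ for a fibrant simplicial category $\mathbf C$, since $\Ncoh$ is right Quillen for the cartesian Quillen equivalence $\mathfrak C\dashv\Ncoh$. Applying this with $\mathbf C=H\Ch$ gives $\Fun(S,\Ndg(\Ch))\simeq\Ncoh((H\Ch)^{\mathfrak C(S)})$. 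It then remains to identify $(H\Ch)^{\mathfrak C(S)}\simeq H(R\Hom(L(S),\Ch))$, which follows from the enriched right-adjointness of the Dold--Kan--type functor $H$ (a packaging of the Eilenberg--Zilber/shuffle compatibilities) together with $G_*\mathfrak C(S)\simeq L(S)$ from Corollary \ref{cor:nervekoszul}. Applying $\Ncoh$ and using $\Ndg\simeq\Ncoh\circ H$ recovers $\Ndg(R\Hom(L(S),\Ch))$, completing the comparison.

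Finally, the \emph{in particular} clause is formal. The derived internal hom $R\Hom(L(S),\Ch)$ is a dg model for the category of $L(S)$-modules, i.e. dg functors $L(S)\to\Ch$; it is quasi-equivalent to the full dg subcategory of fibrant--cofibrant objects of that model category. Under the identification $\Ndg\simeq\Ncoh\circ H$ used above, the dg nerve of a dg category computes its underlying $\infty$-category, and for the dg category of fibrant--cofibrant modules this underlying $\infty$-category is the localization at quasi-isomorphisms, which by Definition \ref{defi:derivedinfinitysecond} and the comment following it is precisely $\mathscr D(L(S))$. Chaining with the equivalence just established yields $\Fun(S,\Ndg(\Ch))\simeq\Ndg(R\Hom(L(S),\Ch))\simeq\mathscr D(L(S))$. (Since both sides depend on $S$ only through its image as an $\infty$-category, and $L$ is invariant under categorical equivalence by Corollary \ref{cor:ctildequillen}, there is no loss in leaving $S$ an arbitrary simplicial set.)
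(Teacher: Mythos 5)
Your setup in the first two paragraphs (testing against an arbitrary $K$, using $\Fun(S,-)\dashv -\times S$, To\"en's derived tensor--hom adjunction, the homotopy adjunction $L\dashv\Ndg$, then Yoneda) is exactly the paper's strategy, and you correctly isolate the crux: $L(K\times S)\simeq L(K)\otimes^{\mathbb L}L(S)$ in $\operatorname{Ho}(\dgCat)$. The gap is in how you discharge this crux. The claim that $\Fun(S,\Ncoh(\mathbf C))\simeq\Ncoh\bigl(\mathbf C^{\mathfrak C(S)}\bigr)$ for a bare fibrant simplicial category $\mathbf C$, with $\mathbf C^{\mathfrak C(S)}$ the \emph{strict} simplicial functor category, does not follow from ``$\Ncoh$ is right Quillen'': the Dwyer--Kan model structure on $\sCat$ is not a cartesian monoidal model category (products of cofibrant simplicial categories are not cofibrant --- this is precisely why $\mathfrak C(K\times S)\to\mathfrak C(K)\times\mathfrak C(S)$ being an equivalence is a theorem, Corollary 2.2.5.6 of \cite{Lurie11a}, rather than formal), so $\mathfrak C\dashv\Ncoh$ is not a monoidal Quillen adjunction and right Quillen-ness says nothing about internal homs. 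The classical statement you are reaching for (Proposition 4.2.4.4 of \cite{Lurie11a}) concerns the fibrant--cofibrant objects of the \emph{projective model structure} on $\cat C^{\mathfrak C[S]}$ for $\cat C$ a combinatorial simplicial model category; for a bare fibrant $\mathbf C$ the strict functor category has the wrong mapping spaces in general, since one needs projectively cofibrant diagrams, not merely a cofibrant domain. Compounding this, the identification $(H\Ch)^{\mathfrak C(S)}\simeq H(R\Hom(L(S),\Ch))$ is exactly where the difficulty of the lemma is concentrated, and ``enriched right-adjointness of $H$'' does not deliver it: the Dold--Kan-type functor is only weakly monoidal, strict enriched functor categories do not transfer formally across a change of enrichment base, and To\"en's $R\Hom$ is not a strict dg functor category at all but a category of quasi-representable bimodules. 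So the step you yourself flag as ``the delicate point'' is asserted rather than proved.

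For comparison, the paper resolves the crux without touching strict functor categories of simplicial categories: it writes $L\simeq G_*\circ\mathfrak C$ on homotopy categories (Corollary \ref{cor:nervekoszul}) and computes
\[
L(K\times S)=G_*\mathfrak C(K\times S)\simeq G_*\bigl(\mathfrak C(K)\times\mathfrak C(S)\bigr)\simeq G_*\mathfrak C(K)\otimes G_*\mathfrak C(S)\simeq L(K)\otimes^{\mathbb L}L(S),
\]
using Corollary 2.2.5.6 of \cite{Lurie11a} for the middle step and the Eilenberg--Zilber quasi-equivalence of normalized chains on hom-spaces for the next one; everything stays at the level of hom-sets in $\operatorname{Ho}(\qCat)$ and $\operatorname{Ho}(\dgCat)$, and To\"en's adjunction finishes the Yoneda computation. (Note also that your fallback within the first route --- proving $\Omega(C\otimes D)\simeq\Omega(C)\otimes^{\mathbb L}\Omega(D)$ and that the Eilenberg--Zilber map is a weak equivalence in $\ptdco^*$ --- is itself nontrivial, since weak equivalences there are detected by $\Omega$, not by quasi-isomorphism.) Your treatment of the ``in particular'' clause does match the paper: To\"en's explicit model of $R\Hom(L(S),\Ch)$ by fibrant--cofibrant $L(S)$-modules, and the fact that the dg nerve of the fibrant--cofibrant objects computes the localization at quasi-isomorphisms. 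To repair your argument, replace the third paragraph by the direct $G_*\circ\mathfrak C$ computation above; it uses only ingredients you already cite.
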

\begin{proof}
By the Yoneda lemma it suffices to compare the functors $\Hom(-, \Ndg(\Fun(L(S), \Ch)))$ and $\Hom(-, \Fun(S, \Ndg(\Ch)))$ on the homotopy category of quasicategories. We recall that $L \dashv \Ndg$ induces an adjunction of homotopy categories between quasicategories and dg categories by \cite[Proposition 1.3.1.20]{Lurie11}.

Let $K$ be any simplicial set. By the proof of  \cite[Proposition 1.2.7.3]{Lurie11a} we have 
\begin{align*}
\Hom_{\operatorname{Ho}(\qCat)}(K, \Fun(S, \Ndg(\Ch))) &\simeq \Hom_{\operatorname{Ho}(\qCat)}(K \times S, \Ndg(\Ch))\\
& \simeq  \Hom_{\operatorname{Ho}(\dgCat)}(L (K \times S), \Ch) \\
& \simeq \Hom_{\operatorname{Ho}(\dgCat)}(LK \otimes LS, \Ch) \\
& \simeq \Hom_{\operatorname{Ho}(\dgCat)}(LK, R\Hom(LS, \Ch)) \\
& \simeq \Hom_{\operatorname{Ho}(\qCat)}(K, \Ndg(R\Hom(LS, \Ch))
\end{align*}
To show that $L$ sends products to tensor product in the homotopy category of dg categories we used that as in Corollary \ref{cor:nervekoszul} we may write $L = G_* \circ \mathfrak C$ on the level of homotopy categories.
Then $\mathfrak C$ preserves products by  \cite[Corollary 2.2.5.6]{Lurie11a}. 
Note that $LS$ is cofibrant so we may consider the tensor product as derived.
The derived tensor product in $\dgCat$ is adjoint to the derived internal hom by \cite[Corollary 6.4]{Toen06}.
 
For the final part we recall from \cite{Toen06} that $R\Hom(LS, \Ch)$ has an explicit model given by the dg category of fibrant cofibrant right quasi-representable $LS  \otimes^L \Ch^{op}$-modules. 
Unravelling the definitions, we see that these are exactly fibrant cofibrant dg modules over $LS$. 
The dg nerve of the subcategory of fibrant cofibrant objects in a dg model structure is equivalent to the localization at weak equivalences, thus $\Ndg(R\Hom(L(S), \Ch)) \simeq \mathscr D(L(S))$. 
\end{proof} 

Note that the quasicategory $\Fun(S, \Ndg(\Ch))$ can be viewed as the derived $\infty$-category of the quasicategory $S$.

\begin{theorem}\label{thm:main}
Let $S$ be a simplicial set considered as a quasicategory. 
Then there is an equivalence of $\infty$-categories between
$\Fun(S, \Ndg(\Ch))$ and $\mathscr D^{\co}(C_*S)$. 
\end{theorem}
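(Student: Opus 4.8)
The plan is to obtain the equivalence as a short composite of equivalences that have all been set up earlier in the paper, so that essentially no new construction is required. First I would apply Lemma \ref{lem:quasidg}, which already identifies $\Fun(S, \Ndg(\Ch))$ with the derived $\infty$-category $\mathscr D(L(S))$ of dg modules over the dg category $L(S)$. By Theorem \ref{thm:dgnerveadjoint} the left adjoint to Lurie's dg nerve is computed as $L(S) \cong \Omega\utilde{C}_*(S)$, so this first step gives $\Fun(S, \Ndg(\Ch)) \simeq \mathscr D(\Omega\utilde{C}_*(S))$.

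Next I would feed the pointed curved coalgebra $C = \utilde{C}_*(S)$ into the module–comodule Koszul duality. By Lemma \ref{lem:twistedchainfunctor} this $C$ is a genuine (split) pointed curved coalgebra, so Theorem \ref{thm:modulecomodule}(2) applies: the Quillen equivalence $\Omega C\boxempty^\tau_{C_0}-\colon C\Comod \rightleftarrows \Omega(C)\Mod \colon C\boxempty^\tau_{C_0}-$ descends, upon localizing both model categories, to the equivalence $\Dco(C) \simeq \mathscr D(\Omega(C))$ recorded immediately after Definition \ref{defi:derivedinfinitysecond}. Specializing to $C = \utilde{C}_*(S)$ yields $\mathscr D(\Omega\utilde{C}_*(S)) \simeq \Dco(\utilde{C}_*(S))$.

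Finally I would transport the coalgebra along the natural isomorphism of curved coalgebras $\utilde{C}_*(S) \cong C_*(S)$ from Lemma \ref{lem:twistedchainfunctor}, given by $(\id, -e)$. An isomorphism of curved coalgebras induces an isomorphism of the associated dg categories of comodules — the underlying graded comodule is unchanged and only the twist of the coaction differential is transported — and this isomorphism carries coacyclic comodules to coacyclic comodules, hence passes to an equivalence of coderived $\infty$-categories $\Dco(\utilde{C}_*(S)) \simeq \Dco(C_*(S)) = \mathscr D^{\co}(C_*S)$. Composing the three equivalences establishes Theorem \ref{thm:main}.

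The one genuinely delicate point, and the step I expect to require care, is this last identification: $C_*(S)$ is an honest dg coalgebra (with vanishing curvature), whereas $\utilde{C}_*(S)$ is honestly curved, so one must check that the coderived $\infty$-category built directly from the dg coalgebra $C_*(S)$ agrees with the coderived category of the associated curved coalgebra, i.e. that the curved morphism $(\id, -e)$ really implements a dg equivalence of comodule categories compatible with the localizing class of maps with coacyclic cone. A secondary bookkeeping issue is the ground ring: since the statement is made for an arbitrary simplicial set and we may take $\ground = \mathbb Z$, I would note — exactly as in the proof of Lemma \ref{lem:leftadjoint} and using Corollary \ref{cor:koszulz} — that $\utilde{C}_*$ is degreewise free over $\ground$, so that the integral versions of the above results apply; over a field the chain of equivalences is immediate.
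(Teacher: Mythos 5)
Your proposal is correct and is essentially the paper's own proof: the paper composes exactly the same three ingredients — Lemma \ref{lem:quasidg} together with $L(S)\simeq\Omega\utilde{C}_*(S)$ from Theorem \ref{thm:dgnerveadjoint}, the equivalence $\Dco(\utilde{C}_*S)\simeq\mathscr{D}(\Omega\utilde{C}_*S)$ from Theorem \ref{thm:modulecomodule}, and the isomorphism $C_*S\cong\utilde{C}_*S$ — merely stated in the opposite order. Your added care about why the curved isomorphism $(\id,-e)$ identifies the two coderived categories, and about degreewise freeness over the ground ring, only makes explicit points the paper passes over silently.
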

\begin{proof}
	Theorem \ref{thm:modulecomodule} (with the notation of Definition  \ref{defi:derivedinfinitysecond}) provides an equivalence $\Dco(\utilde C_*S) \simeq \mathscr D(\Omega(\utilde C_*S))$, where the right hand side is just the $\infty$-categorical localization of $\Omega(C_*S)$-modules at quasi-isomorphisms.
	The isomorphism $C_*S \cong \utilde C_*S$
	gives $\mathscr D^{\co}(\utilde C_*S) \simeq \mathscr D^{\co}(C_*S)$.
	
	By Theorem \ref{thm:dgnerveadjoint} we know $\Omega(\utilde C_*S) \simeq L(S)$ and thus the result follows from Lemma \ref{lem:quasidg}.
\end{proof}
\begin{rem}
	The coderived $\infty$-category of $C_*(S)$ is also equivalent to (the $\infty$-version of) the category of twisted modules over $C^*(S)$ as they were considered in \cite{HolsteinG}.
\end{rem}
We also have the following consequence of our earlier results.
\begin{prop}
	For any simplicial set $S$, the Morita fibrant replacement of the dg category $L(S)^{\op}$ is quasi-equivalent to the category of finite dimensional  $C_*(S)$ comodules.
	This may be viewed as the perfect derived category of $S$. 
\end{prop}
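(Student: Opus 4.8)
The plan is to deduce this statement from Theorem \ref{thm:main} together with the module-comodule Koszul duality of Theorem \ref{thm:modulecomodule}, by identifying the correct small subcategory on each side and matching it across the equivalence. First I would recall that Theorem \ref{thm:modulecomodule}(2) gives a Quillen equivalence $\Omega C\boxempty_{C_0}^\tau-:C\Comod\rightleftarrows \Omega(C)\Mod:C\boxempty_{C_0}^\tau-$, and specialize it to $C=\utilde C_*(S)\cong C_*(S)$, so that $\Omega(C)\simeq L(S)$ by Theorem \ref{thm:dgnerveadjoint}. Under this equivalence, Example \ref{eg:representables} already tells us that the representable $L(S)$-modules correspond to the one-dimensional comodules in $C_*(S)\Comod$. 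The essential point is therefore to upgrade this from single representables to the subcategory of \emph{finite dimensional} comodules, and to match it with a recognizable dg subcategory of $L(S)\Mod$.

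Next I would argue that the dg category of finite dimensional $C_*(S)$-comodules is precisely the full subcategory of $\mathscr D^{\co}(C_*(S))$ generated under the appropriate operations (shifts, cones, retracts) by the one-dimensional comodules $\ground_X$ attached to grouplike elements, i.e. the smallest thick or idempotent-complete subcategory containing the representables. Since, by Example \ref{eg:representables}, the $\ground_X$ are sent by the Koszul equivalence to the images of the representable $L(S)$-modules under the dual Yoneda embedding, the finite dimensional comodules correspond on the module side to the objects built from representables, which is exactly the Morita fibrant (idempotent-complete pretriangulated) replacement of the full subcategory of representables. The representables form a copy of $L(S)^{\op}$ via the Yoneda embedding, so their Morita closure is the Morita fibrant replacement of $L(S)^{\op}$. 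Combining these identifications yields the claimed quasi-equivalence.

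Concretely, the steps in order are: (i) invoke Theorem \ref{thm:modulecomodule}(2) and Theorem \ref{thm:dgnerveadjoint} to obtain $\mathscr D^{\co}(C_*(S))\simeq \mathscr D(L(S))$ with the comodule $\ground_X$ matching the representable functor corepresented by $X$, as in Example \ref{eg:representables}; (ii) identify the dg category of finite dimensional comodules as the thick closure of the $\{\ground_X\}$ inside $C_*(S)\Comod$, using that every finite dimensional comodule is a finite iterated extension of the simple (one-dimensional) comodules $\ground_X$ of a pointed coalgebra; (iii) transport this thick closure across the Koszul equivalence to the thick closure of the representables in $L(S)\Mod$; (iv) recognize the latter as the Morita fibrant replacement of $L(S)^{\op}$, since Morita fibrant replacement is exactly passage to the idempotent-complete pretriangulated hull of the representables and the Yoneda embedding identifies the representables with $L(S)^{\op}$.

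The main obstacle I expect is step (ii), namely pinning down precisely in what sense ``finite dimensional comodule'' corresponds to ``thick closure of the representables'' and verifying that this matches the Morita fibrant replacement rather than some larger or smaller subcategory. Because $C_*(S)$ is a pointed curved coalgebra, its finite dimensional comodules are iterated extensions of the one-dimensional simples $\ground_X$, but one must check that taking finite dimensional comodules is compatible with the coderived structure (cones and shifts) and that no infinite-dimensional objects sneak in; equivalently, one must confirm that the Koszul functor $\Omega C\boxempty_{C_0}^\tau-$ sends finite dimensional comodules exactly to compact generators built from representables. The matching with \emph{Morita} fibrancy (as opposed to the plain derived category) is the delicate comparison, and I would handle it by appealing to the explicit model of $R\Hom$-type module categories already used in the proof of Lemma \ref{lem:quasidg} together with the standard fact that the category of finite dimensional comodules over a pointed coalgebra is the idempotent-complete triangulated hull of its simple comodules.
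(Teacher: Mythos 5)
Your proposal is correct and takes essentially the same route as the paper: both arguments use the module--comodule Koszul duality together with Example \ref{eg:representables} to match the one-dimensional $C_*(S)$-comodules with the representable $L(S)$-modules, and then pass to the subcategories these generate on each side. The only real difference is one of care: the paper closes both sides merely ``under extensions and suspensions,'' whereas you explicitly include retracts so as to hit the idempotent-complete pretriangulated hull that Morita fibrancy requires -- a point the paper's terser argument glosses over.
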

\begin{proof}
	Note that we can identify the dg category generated by one-dimensional comodules with the image of $L(S)^{\op}$ under the Yoneda embedding  in $L(S)\Mod$. Indeed,
	for any vertex $s \in S_0$ consider the corresponding one-dimensional comodule $\ground_s$ of $C_* S$ (all 1-dimensional  $C_* S$-comodules are of this form).
	By Example \ref{eg:representables} the adjunction 
	from Proposition \ref{prop:fgadjoint} identifies $\ground_s$ with the right $L(S)$-module represented by $s \in \Ob(L(S))$. 
	
	To complete the proof of the corollary we close both sides of this correspondence under extensions and suspensions.
\end{proof}
\begin{cor}
	The functor $S \mapsto \Dco(C_{*}(S))$ sends colimits to limits.
\end{cor}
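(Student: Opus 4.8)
The strategy is to identify the functor $S \mapsto \Dco(C_*(S))$ with the contravariant functor $\Fun(-, \Ndg(\Ch))$ and then invoke the formal fact that a functor of this shape carries colimits to limits. By Theorem \ref{thm:main} there is, for every simplicial set $S$, an equivalence of $\infty$-categories $\Dco(C_*(S)) \simeq \Fun(S, \Ndg(\Ch))$, and under this identification the functoriality of the left-hand side (via restriction of scalars along $C_*(f)$ for $f\colon S \to S'$, as in the corollary to Theorem \ref{thm:modulecomodule}) matches the contravariant restriction functoriality of $\Fun(-, \Ndg(\Ch))$. It therefore suffices to establish two things: that this equivalence is natural in $S$, and that $\Fun(-, \Ndg(\Ch))$ sends colimits of simplicial sets to limits of $\infty$-categories.

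The second point is formal. For any $\infty$-category $\cat E$ and any small diagram $\{S_i\}$ in $\sSet$, the exponential adjunction $\Hom(K, \Fun(T, \cat E)) \simeq \Hom(K \times T, \cat E)$ of \cite[Proposition 1.2.7.3]{Lurie11a}, together with the fact that $K \times -$ preserves colimits (cartesian closedness of the $\infty$-category of $\infty$-categories), gives
\begin{align*}
\Hom(K, \Fun(\textstyle\colim_i S_i, \cat E)) &\simeq \Hom(K \times \textstyle\colim_i S_i, \cat E) \simeq \Hom(\textstyle\colim_i (K \times S_i), \cat E) \\
&\simeq \lim_i \Hom(K \times S_i, \cat E) \simeq \Hom(K, \textstyle\lim_i \Fun(S_i, \cat E)).
\end{align*}
By the Yoneda lemma applied on the homotopy category of quasicategories this yields $\Fun(\colim_i S_i, \cat E) \simeq \lim_i \Fun(S_i, \cat E)$, which is the desired statement for $\cat E = \Ndg(\Ch)$.

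The real content, and the main obstacle, is the naturality of the equivalence of Theorem \ref{thm:main} in the variable $S$. Its proof exhibits the equivalence as a composite of four pieces, each functorial in $S$: the isomorphism $C_*(S) \cong \utilde C_*(S)$ of Lemma \ref{lem:twistedchainfunctor}, natural because $\utilde C_*$ is a functor; the identification $\Omega \utilde C_*(S) \simeq L(S)$ of Theorem \ref{thm:dgnerveadjoint}, natural since these are the same functor; the natural equivalence $\mathscr D(L(S)) \simeq \Fun(S, \Ndg(\Ch))$ of Lemma \ref{lem:quasidg}; and the module-comodule equivalence $\Dco(\utilde C_*(S)) \simeq \mathscr D(\Omega \utilde C_*(S))$ of Theorem \ref{thm:modulecomodule}. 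The delicate step is the last one: one must check that the twisting-cochain functors $\Omega C \boxempty^\tau_{C_0} -$ and $C \boxempty^\tau_{C_0} -$ intertwine the restriction functoriality of $\Dco(C_*(-))$ with that of $\mathscr D(L(-))$, which is precisely the compatibility recorded in diagram \eqref{eq:restrictioncorestriction} (valid by inspection for an arbitrary map $f$, not only for weak equivalences). Granting this, the four natural equivalences compose to a natural equivalence of functors $\sSet^{\op} \to \qCat$ between $\Dco(C_*(-))$ and $\Fun(-, \Ndg(\Ch))$, and the corollary follows from the two paragraphs above.
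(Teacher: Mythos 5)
Your proposal is correct and takes essentially the same approach as the paper, whose entire proof is the single line ``This is immediate from Theorem \ref{thm:main}'' --- the implicit content being precisely your identification $\Dco(C_{*}(-)) \simeq \Fun(-, \Ndg(\Ch))$ together with the formal fact that $\Fun(-,\cat E)$ converts colimits into limits; your attention to naturality in $S$ supplies detail the paper leaves unstated. One small caveat: the Yoneda argument should be run $\infty$-categorically (on mapping spaces), not on homotopy-category hom-sets, since homotopy (co)limits are not (co)limits in the homotopy category and hom-sets there do not commute with them --- equivalently, one can note that $\Fun(-,\cat E)$ is right Quillen from $(\qCat)^{\op}$ to $\qCat$, so it carries homotopy colimits to homotopy limits; this is a standard repair and does not affect the structure of your argument.
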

\begin{proof}
This is immediate from Theorem \ref{thm:main}.
\end{proof}

	\section{Stratified spaces}	\label{sect:stratified}
	We finish with an application of Theorem \ref{thm:main} to stratified spaces.
	 We will use the terminology of \cite[Appendix B]{Lurie11}. 	
	Let $X$ be a paracompact topological space which is locally of singular shape and is equipped with a conical $A$-stratification where $A$ is a partially ordered set satisfying the ascending chain condition. 
	We write $\operatorname{Exit}(X)$ for the $\infty$-category of exit paths of the $A$-stratified space $X$, denoted by $\operatorname{Sing}^A(X)$ in \cite{Lurie11}.
	
	Then Theorem A.9.3 of \cite{Lurie11} (together with the discussion just  before Construction A.9.2) states that the $\infty$-categories of constructible sheaves of spaces on $X$, written as $\operatorname{Constr}(X, \cat S)$ is equivalent to $\Fun(\operatorname{Exit}(X), \cat S)$ where $\mathcal S$ is the quasicategory of spaces. We write this 
	 equivalence as $\Psi$.
	
	We are interested in the linear version. This is probably well-known to experts, we outline a proof for lack of a reference.
	\begin{prop}\label{prop:lurie}
		The $\infty$-category of constructible sheaves of cochain complexes on $X$ is equivalent to $\Fun(\operatorname{Exit}(X),\Ndg(\Ch))$.
	\end{prop}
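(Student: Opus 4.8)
The plan is to deduce the statement from Lurie's equivalence $\Psi$ by changing coefficients from the $\infty$-category $\mathcal{S}$ of spaces to $\Ndg(\Ch)\simeq\mathcal{D}(\ground)$. Both are presentable $\infty$-categories, and the symmetric monoidal structure on the $\infty$-category $\mathrm{Pr}^{L}$ of presentable $\infty$-categories and colimit-preserving functors (with unit $\mathcal{S}$ and the Lurie tensor product $\otimes$) lets us promote a statement about $\mathcal{S}$-valued objects to one about $\Ndg(\Ch)$-valued objects. Concretely, since $\Psi\colon \operatorname{Constr}(X,\mathcal{S})\xrightarrow{\ \sim\ }\Fun(\operatorname{Exit}(X),\mathcal{S})$ is an equivalence of presentable $\infty$-categories (both sides are presentable, the first because it is equivalent to the second), it is in particular a morphism in $\mathrm{Pr}^{L}$, and applying the functor $-\otimes\Ndg(\Ch)$ yields an equivalence $\operatorname{Constr}(X,\mathcal{S})\otimes\Ndg(\Ch)\simeq\Fun(\operatorname{Exit}(X),\mathcal{S})\otimes\Ndg(\Ch)$. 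It then remains to identify each side with the corresponding $\Ndg(\Ch)$-valued category.

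For the right-hand side I would use the standard identification $\mathcal{P}(K)\otimes\mathcal{C}\simeq\Fun(K^{\operatorname{op}},\mathcal{C})$ for a small $\infty$-category $K$ and presentable $\mathcal{C}$ \cite{Lurie11}. Taking $K=\operatorname{Exit}(X)^{\operatorname{op}}$ (which is essentially small under the stated hypotheses) and $\mathcal{C}=\Ndg(\Ch)$, and noting $\Fun(\operatorname{Exit}(X),\mathcal{S})=\mathcal{P}(\operatorname{Exit}(X)^{\operatorname{op}})$, this gives $\Fun(\operatorname{Exit}(X),\mathcal{S})\otimes\Ndg(\Ch)\simeq\Fun(\operatorname{Exit}(X),\Ndg(\Ch))$, which is the target of the proposition.

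The substance of the proof, and the step I expect to be the main obstacle, is the matching identification on the left: $\operatorname{Constr}(X,\mathcal{S})\otimes\Ndg(\Ch)\simeq\operatorname{Constr}(X,\Ndg(\Ch))$. I would first invoke the standard fact that $\operatorname{Shv}(X,\mathcal{C})\simeq\operatorname{Shv}(X,\mathcal{S})\otimes\mathcal{C}$ for presentable $\mathcal{C}$, so that changing coefficients is compatible with the sheaf condition. Constructibility is a condition imposed stratum by stratum: a sheaf is $A$-constructible precisely when each restriction $i_a^{*}F$ along a stratum inclusion $i_a\colon X_a\hookrightarrow X$ is locally constant. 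Each $i_a^{*}$ is a left adjoint, hence a morphism in $\mathrm{Pr}^{L}$ and therefore compatible with $-\otimes\Ndg(\Ch)$, while locally constant $\mathcal{C}$-valued sheaves on $X_a$ are identified with $\Fun(\operatorname{Sing}(X_a),\mathcal{C})\simeq\operatorname{LocSys}(X_a,\mathcal{S})\otimes\mathcal{C}$ by the (tensored) monodromy equivalence \cite{Lurie11a}. Thus the full subcategory cut out by the constructibility conditions is itself compatible with extension of coefficients, giving the required identification. Combining the three displayed equivalences with $\Psi\otimes\Ndg(\Ch)$ proves the proposition.

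Finally I would remark that the argument is really just the observation that Lurie's proof of Theorem A.9.3 goes through verbatim with $\mathcal{S}$ replaced by any presentable $\infty$-category, the only nontrivial input being the monodromy description of locally constant sheaves over each stratum; the tensor-product formulation above simply packages this change of coefficients cleanly. The main care needed is to ensure that the constructibility condition, \emph{a priori} a condition on an intersection of preimages of subcategories, interacts correctly with the tensor product, which is exactly why the reduction to the stratum-wise local-system statement is the crucial point.
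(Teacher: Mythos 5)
Your route is genuinely different from the paper's: the paper deduces the linear statement from Lurie's Theorem~A.9.3 by first passing to stabilizations (identifying spectrum objects on both sides, and checking that spectrum objects in constructible sheaves of spaces are constructible sheaves of spectra) and then passing to $\ground$-module objects via the stable Dold--Kan correspondence, whereas you change coefficients in one step using the Lurie tensor product on $\mathrm{Pr}^{L}$. The paper's constructions $\Sp(-)$ and $\mathrm{Mod}_{\ground}(-)$ manifestly restrict to full subcategories, which is precisely why that route never has to confront the question of whether a coefficient-extension operation preserves full subcategories; your route confronts it head on, and that is where the proposal has a gap.

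The gap is the step you yourself flag as crucial and then wave through: ``the full subcategory cut out by the constructibility conditions is itself compatible with extension of coefficients.'' This does not follow from the ingredients you list. The functor $-\otimes\Ndg(\Ch)$ on $\mathrm{Pr}^{L}$ preserves colimits, but it does \emph{not} in general preserve limits, nor does it in general carry fully faithful colimit-preserving inclusions to fully faithful functors; so knowing that each $i_a^{*}$ is a morphism in $\mathrm{Pr}^{L}$ and that local systems on each stratum tensor correctly does not yet identify $\operatorname{Constr}(X,\mathcal{S})\otimes\Ndg(\Ch)$ with the subcategory of $\operatorname{Shv}(X,\Ndg(\Ch))$ cut out by the same conditions. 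To close the argument you need two further inputs: (a) exhibit $\operatorname{Constr}(X,\mathcal{C})$ as a pullback \emph{in} $\mathrm{Pr}^{L}$, namely the pullback of $\operatorname{Shv}(X,\mathcal{C})\to\prod_{a\in A}\operatorname{Shv}(X_a,\mathcal{C})$ along $\prod_{a\in A}\operatorname{Loc}(X_a,\mathcal{C})\to\prod_{a\in A}\operatorname{Shv}(X_a,\mathcal{C})$, which requires checking that the inclusion of locally constant sheaves on each stratum is colimit-preserving (true here since the strata are locally of singular shape, via the monodromy equivalence), and that limits in $\mathrm{Pr}^{L}$ are computed in $\mathsf{Cat}_\infty$; and (b) the fact that $\Ndg(\Ch)\simeq\mathscr{D}(\ground)=\mathrm{Mod}_{\ground}$ is a \emph{dualizable} object of $\mathrm{Pr}^{L}$ (being compactly generated and stable), so that $-\otimes\Ndg(\Ch)$ preserves arbitrary limits, in particular the pullback and the products over $A$ above. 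With (a) and (b), together with the standard identifications $\operatorname{Shv}(X,\mathcal{S})\otimes\mathcal{C}\simeq\operatorname{Shv}(X,\mathcal{C})$ and $\mathcal{P}(K)\otimes\mathcal{C}\simeq\Fun(K^{\op},\mathcal{C})$ you quote (the former itself needs a word about comparing sheaves on the site with limit-preserving functors on the topos, and about hypercompleteness in Lurie's setting), your proof becomes correct; as written, the decisive step is an assertion rather than a consequence of what precedes it.
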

\begin{proof}
	We first extend the result to spectra by considering the associated stabilizations. 
	We may for example identify categories of spectrum objects in the sense of  \cite[Section 1.4.2]{Lurie11} for both categories, obtaining an equivalence
	$\Psi: \Sp(\Fun(\operatorname{Exit}(X), \cat S)) \simeq \Sp(\operatorname{Constr}(X, \cat S))$.
	
	By \cite[Remark 1.4.2.]{Lurie11} we have $\Sp(\Fun(\operatorname{Exit}(X), \cat S)) \simeq \Fun(\operatorname{Exit}(X), \Sp(\cat S))$, and of course $\Sp(\cat S)$ is the $\infty$-category of spectra $\Sp$.
	
	It remains to check that spectrum objects in constructible sheaves of spaces are constructible sheaves of spectra.
	
	The stabilization of the $\infty$-category of sheaves of spaces is the $\infty$-category of
	sheaves of spectra, see  \cite[Remark 1.2]{Lurie11c}.
	Moreover, the subcategories of constructible sheaves can be identified. 
	Constructible sheaves are those whose pullback to small enough open subsets of strata of $X$ are  constant.
	We first observe that constant sheaves on both sides are identified. 
	Thus it suffices to show that the equivalence of sheaves of spectra with the stabilization of sheaves of spaces is compatible with pullbacks. 
	We consider the stabilization as the homotopy limit of a tower of loop functors, see  \cite[Proposition 1.4.2.24]{Lurie11}.
	As the pullback functor is left exact it commutes with constructing the loop functor and we can identify constructible sheaves as desired.

	Next we replace spectra by (unbounded) chain complexes.
	By the stable Dold-Kan correspondence $\Ch_{\ground}$ is equivalent to $\ground$-modules in spectra, and these may be characterized as $\ground$-module objects in $\Sp$, i.e.\ abelian group objects in the cartesian monoidal category $\Sp$ equipped with a compatible action of the monoid $\ground$. 
	
	We now claim that the $\infty$-categories of the corollary arise as $\ground$-module objects in the categories considered above.
	To be precise, the equivalence $\Psi$, and the induced map on constructible spectra, is an equivalence of cartesian monoidal categories and it identifies the constant functor $\ground$ with the constant sheaf with value $\ground$. 
	We write $\ground$ for both.
	In both categories $\ground$ is a monoid and we may identify the $\ground$-module objects of $\operatorname{Constr}(X, \Sp)$ and $\Fun(\operatorname{Exit}(X), \Sp)$.
	
	These are the objects we are interested in, as the product of sheaves or functors is defined object-wise.
\end{proof}
	
We may now interpret constructible sheaves as follows:
	
	\begin{prop}\label{prop:constructiblecomodules}
		Let $X$ be a topological space with an $A$-stratification as above.
		Then the derived $\infty$-category of constructible sheaves of chain complexes on $X$ is equivalent to $\Dco( C_*\operatorname{Exit}(X))$.
	\end{prop}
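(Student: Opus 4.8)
The plan is to obtain this proposition as an immediate composition of the two equivalences already established, namely Proposition \ref{prop:lurie} and Theorem \ref{thm:main}. The space $\operatorname{Exit}(X) = \operatorname{Sing}^A(X)$ is by construction a simplicial set (indeed a quasicategory), so it is precisely the kind of input to which Theorem \ref{thm:main} applies, with $S = \operatorname{Exit}(X)$.

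First I would invoke Proposition \ref{prop:lurie}, which identifies the $\infty$-category of constructible sheaves of (co)chain complexes on $X$ with the functor category $\Fun(\operatorname{Exit}(X), \Ndg(\Ch))$. Since we work in $\Ch = \mathsf{dgVect}$ with its cohomological grading, this functor category already carries the derived structure, so there is no discrepancy between ``constructible sheaves of chain complexes'' and their derived $\infty$-category; this is exactly the observation recorded in the remark that $\Fun(S, \Ndg(\Ch))$ may be viewed as the derived $\infty$-category of $S$.

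Next I would apply Theorem \ref{thm:main} to the simplicial set $S = \operatorname{Exit}(X)$. This yields an equivalence of $\infty$-categories
\[
\Fun(\operatorname{Exit}(X), \Ndg(\Ch)) \simeq \mathscr{D}^{\co}(C_*\operatorname{Exit}(X)) = \Dco(C_*\operatorname{Exit}(X)).
\]
Composing this with the equivalence of the previous paragraph gives the desired identification of the derived $\infty$-category of constructible sheaves of chain complexes on $X$ with $\Dco(C_*\operatorname{Exit}(X))$.

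I do not expect any genuine obstacle here, as the argument is purely a concatenation of prior results; the substantive work lies entirely in Proposition \ref{prop:lurie} (the linear refinement of Lurie's exit-path theorem) and in Theorem \ref{thm:main} (the comodule description of the derived $\infty$-category of a simplicial set). The only point requiring a word of care is the bookkeeping identification of ``derived $\infty$-category of constructible sheaves of chain complexes'' with the $\infty$-category of $\Ch$-valued constructible sheaves appearing in Proposition \ref{prop:lurie}, which is handled by the remark cited above. Hence the proof reduces to a single sentence chaining the two equivalences.
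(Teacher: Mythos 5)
Your proposal is correct and coincides exactly with the paper's own argument: the proof there reads simply ``This follows by combining Theorem \ref{thm:main} with Proposition \ref{prop:lurie}.'' Your additional bookkeeping remark about identifying the derived $\infty$-category of constructible sheaves with the $\Ch$-valued sheaf category is a reasonable elaboration, but the substance is identical.
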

\begin{proof}
	This follows by combining Theorem \ref{thm:main} with Proposition \ref{prop:lurie}.
\end{proof}
	
	Of course the exit path category is in general quite unwieldy.
	However, it forms part of an adjunction with the left adjoint given by a \emph{stratified realization functor} $K \mapsto ||K||$ from simplicial sets to stratified spaces, see  \cite[Definition 7.1.0.1]{Nand-Lal19}.
	To construct the stratified realization we send $\Delta^{n}$ to  $|\Delta^{n}|$ stratified by the $k$-simplices spanned by the first $k+1$ vertices for all $k \geq 0$, and then extend by colimits.
	\begin{cor}
		Let $K$ be a simplicial set. If $K \simeq \operatorname{Exit}(||K||)$ then the $\infty$-category of constructible sheaves on the stratified space $||K||$ is categorically equivalent to $\mathscr D^{\co}(C_*K)$.
	\end{cor}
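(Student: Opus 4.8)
The plan is to deduce the statement by combining Proposition \ref{prop:constructiblecomodules} with the homotopy invariance of the coderived category under the chain functor, using the hypothesis solely to replace $\operatorname{Exit}(||K||)$ by $K$. First I would apply Proposition \ref{prop:constructiblecomodules} to the stratified space $X = ||K||$, obtaining a categorical equivalence between the $\infty$-category of constructible sheaves of chain complexes on $||K||$ and $\Dco(C_*\operatorname{Exit}(||K||))$. For this step to be legitimate one must record that $||K||$ genuinely satisfies the standing hypotheses of that proposition (paracompactness, local singular shape, and a conical $A$-stratification with $A$ satisfying the ascending chain condition); this is guaranteed by the very construction of the stratified realization functor of \cite{Nand-Lal19}, whose output is a conically stratified space of the required type.

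It then remains to identify $\Dco(C_*\operatorname{Exit}(||K||))$ with $\Dco(C_*K)$. Here I would invoke the hypothesis: the condition $K \simeq \operatorname{Exit}(||K||)$ asserts precisely that the unit $K \to \operatorname{Exit}(||K||)$ of the realization/exit-path adjunction is a categorical equivalence of simplicial sets, i.e.\ a weak equivalence in $\qCat$. By Corollary \ref{cor:ctildequillen} the functor $\tilde C_*$ preserves weak equivalences, so it sends this map to a weak equivalence $\tilde C_*(K) \to \tilde C_*(\operatorname{Exit}(||K||))$ of pointed curved coalgebras. By Lemma \ref{lem:twistedchainfunctor} the twisted chain coalgebra $\tilde C_*$ is naturally isomorphic, as a curved coalgebra, to the ordinary chain coalgebra $C_*$, so this is equally a weak equivalence between $C_*K$ and $C_*\operatorname{Exit}(||K||)$.

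Finally I would apply the Corollary following Theorem \ref{thm:modulecomodule}, which states that two weakly equivalent split curved coalgebras have equivalent coderived categories. Applied to the weak equivalence just produced, this yields $\Dco(C_*K) \simeq \Dco(C_*\operatorname{Exit}(||K||))$. Chaining this equivalence with the one coming from Proposition \ref{prop:constructiblecomodules} gives the desired equivalence between the $\infty$-category of constructible sheaves on $||K||$ and $\Dco(C_*K)$.

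I expect the only genuinely delicate point to be the verification that Proposition \ref{prop:constructiblecomodules} actually applies to $X = ||K||$ — that is, confirming that the stratified realization produces a space meeting Lurie's hypotheses — rather than the coalgebraic bookkeeping, which is a formal consequence of the cited preservation (Corollary \ref{cor:ctildequillen}) and invariance (the Corollary after Theorem \ref{thm:modulecomodule}) results. One should also take care that each ``$\simeq$'' is a genuine categorical equivalence of $\infty$-categories so that the composition is meaningful; since each arises from a weak equivalence already handled by a Quillen or relative-category statement above, this composition is unproblematic.
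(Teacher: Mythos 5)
Your proposal is correct. The paper states this corollary without proof, the intended argument being exactly your first step: apply Proposition \ref{prop:constructiblecomodules} to $X = ||K||$ and then use the hypothesis $K \simeq \operatorname{Exit}(||K||)$ to replace one simplicial set by the other. Where your argument differs from the most direct reading of the paper is in how the replacement is justified. The shortest route inside the paper is via Theorem \ref{thm:main}: since $\Dco(C_*S) \simeq \Fun(S, \Ndg(\Ch))$ for every simplicial set $S$, and functor quasicategories $\Fun(-,\Ndg(\Ch))$ are manifestly invariant under categorical equivalence in the Joyal model structure, the hypothesis immediately gives $\Dco(C_*\operatorname{Exit}(||K||)) \simeq \Fun(\operatorname{Exit}(||K||),\Ndg(\Ch)) \simeq \Fun(K,\Ndg(\Ch)) \simeq \Dco(C_*K)$. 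You instead stay entirely on the coalgebraic side: Corollary \ref{cor:ctildequillen} (preservation of weak equivalences by $\utilde C_*$), the isomorphism $\utilde C_* \cong C_*$ of Lemma \ref{lem:twistedchainfunctor}, and the corollary following Theorem \ref{thm:modulecomodule} (weakly equivalent split curved coalgebras have equivalent coderived categories). This is equally valid and has the virtue of not re-invoking the functor-category equivalence, though it routes through more machinery than is strictly needed; note only that the last cited corollary is stated at the level of homotopy categories, so one should observe (as the paper itself implicitly does elsewhere) that the adjoint pair $E_f \dashv R_f$ promoting it to an equivalence of $\infty$-categories. Your caveat about verifying Lurie's hypotheses (paracompactness, local singular shape, conical stratification over a poset with the ascending chain condition) for $||K||$ is well taken; the paper glosses over this as well, effectively subsuming it into the hypothesis that $\operatorname{Exit}(||K||)$ is defined and equivalent to $K$.
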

	While $K \simeq \operatorname{Exit}(||K||)$ does not hold for all simplicial sets, there is a natural class for which one can expect it.
	\begin{conj}
		Let $K$ be a quasicategory in which all endomorphisms are equivalences.
		Then the $\infty$-category of constructible sheaves on the stratified space $||K||$ is categorically equivalent to $\mathscr D^{co}(C_*K)$.
	\end{conj}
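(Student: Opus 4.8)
The plan is to reduce the conjecture, via the Corollary immediately preceding it, to the purely geometric-categorical assertion that the unit of the stratified realization--exit path adjunction $||-|| \dashv \operatorname{Exit}$ is a categorical equivalence $K \xrightarrow{\simeq} \operatorname{Exit}(||K||)$ whenever $K$ is a quasicategory in which all endomorphisms are equivalences. Granting this, the conjecture is immediate: that Corollary already shows the $\infty$-category of constructible sheaves on $||K||$ is equivalent to $\mathscr{D}^{\co}(C_*K)$ as soon as $K \simeq \operatorname{Exit}(||K||)$, which in turn rests on Theorem \ref{thm:main}. Thus all the homological content is in place, and what remains is to identify $K$ with the exit path $\infty$-category of its realization.

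First I would record why the hypothesis is forced rather than ad hoc. For any suitably nice stratified space $X$ the $\infty$-category $\operatorname{Exit}(X)$ has the property that all its endomorphisms are equivalences: an endomorphism is represented by an exit path beginning and ending in the same stratum, and up to homotopy such a path stays within that single stratum and is therefore invertible. So ``all endomorphisms are equivalences'' is a necessary feature of any exit path category, and the conjecture asserts that for quasicategories it is also sufficient to be realized by $||K||$. This presents the conjecture as an instance of the \emph{exodromy} characterization of exit path $\infty$-categories, where the relevant data is a conservative functor to a poset with $\infty$-groupoid fibers.

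Next I would verify the unit on the standard simplices, the computational heart of the argument. By construction $||\Delta^n||$ is $|\Delta^n|$ with its standard face stratification over the poset $[n]$, and Lurie's exit path theorem (Theorem A.9.3 of \cite{Lurie11}) identifies $\operatorname{Exit}(||\Delta^n||)$ with the nerve $\Delta^n$ of $[n]$, so the unit $\Delta^n \to \operatorname{Exit}(||\Delta^n||)$ is an equivalence. Since $||-||$ is defined by colimits from the simplices, I would try to propagate this along the canonical presentation $K = \colim_{\Delta^n \in \Delta K} \Delta^n$, using that $\operatorname{Exit}$ carries the relevant homotopy colimits of stratified spaces to colimits of $\infty$-categories. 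The cleanest route is to pass to the canonical stratification of $K$ over its poset reflection and match it with the geometric stratification of $||K||$, so that the two exit path descriptions agree fiberwise (each fiber being the $\infty$-groupoid of a stratum) and then globally.

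The hard part, and the reason this remains a conjecture, is controlling the geometry of $||K||$ for general $K$. Lurie's theorem requires $X$ to be conically stratified and locally of singular shape, and for an arbitrary simplicial set $||K||$ need not be conically stratified, so Theorem A.9.3 does not apply off the shelf; one must either show that the hypothesis on $K$ guarantees a conical (or otherwise exodromy-admissible) realization, or bypass the geometry and argue abstractly that an $\infty$-category with a conservative functor to a poset and $\infty$-groupoid fibers is the exit path category of \emph{some} stratified space equivalent to $||K||$. Establishing that $\operatorname{Exit}$ commutes with the gluing colimits used to build $||K||$ --- not merely with the cellular pieces --- is the key technical obstacle, and it is precisely where the restriction to quasicategories with invertible endomorphisms should be doing the work.
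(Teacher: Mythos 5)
The statement you were asked to prove is labelled a \emph{conjecture} in the paper, and the paper offers no proof of it: the authors only reduce it, via the corollary immediately preceding it (which rests on Theorem \ref{thm:main}), to the assertion that the unit $K \to \operatorname{Exit}(||K||)$ is a categorical equivalence, and they note its kinship with Conjecture 0.0.8 of \cite{Ayala15}. Your proposal performs exactly this reduction, correctly observes that invertibility of endomorphisms is a necessary feature of exit-path $\infty$-categories (an endomorphism exit path must start and end in the same stratum, hence lies in that stratum's $\infty$-groupoid), correctly checks the unit on standard simplices, and then honestly isolates the open content: for a general such $K$ the realization $||K||$ is not known to be conically stratified, so Lurie's Theorem A.9.3 does not apply, and $\operatorname{Exit}$ is not known to commute with the gluing colimits defining $||K||$. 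This is precisely the gap that makes the statement a conjecture rather than a theorem, so relative to the paper your attempt contains no error and no omission --- but be clear that what you have written is a correct reduction plus a statement of the obstruction, not a proof, exactly matching the paper's own (non-)treatment.
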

	This is in line with Conjecture 0.0.8 in \cite{Ayala15} which states that these are exactly the quasi-categories arising as exit paths of stratified spaces.

\bibliography{./biblibrary2}

\end{document}